\let\oldmarginpar\marginpar
\renewcommand{\marginpar}[1]{\oldmarginpar{\small\textit{{#1}}}}
\setlist[enumerate,1]{label=(\roman*),font=\normalfont}
\newtheorem{theorem}{Theorem}
\newtheorem{corollary}[theorem]{Corollary}
\newtheorem{lemma}[theorem]{Lemma}
\newtheorem{proposition}[theorem]{Proposition}
\theoremstyle{definition}
\theoremstyle{remark}
\newtheorem{remark}[theorem]{Remark}
\crefname{remark}{Remark}{Remarks}
\theoremstyle{definition}
\theoremstyle{remark}
\crefname{rmk}{Remark}{Remarks}
\crefname{problem}{Problem}{Problems}
\DeclareMathOperator{\conv}{conv}
\newcommand{\R}{\mathbb{R}}
\newcommand{\h}{\frac{1}{2}}
\renewcommand{\ge}{\geqslant}
\renewcommand{\le}{\leqslant}
\date{\today}
\title{A lower bound theorem for $d$-polytopes with $2d+1$ vertices}
\author{Guillermo Pineda-Villavicencio}
\address{School of Information Technology, Deakin University, Geelong,  Australia\\Federation University, Ballarat,  Australia}
\email{\texttt{work@guillermo.com.au}}
\author{David Yost}
\address{Federation University, Ballarat, Australia}
\email{\texttt{d.yost@federation.edu.au}}
\keywords{polytope, f-vector, dual polytope, lower bound}
\subjclass[2020]{Primary 52B05; Secondary 52B35, 52B11}
\begin{document}
\begin{abstract} The problem of calculating exact lower bounds for the number of $k$-faces of $d$-polytopes with $n$ vertices, for each value of $k$, and characterising the minimisers, has recently been solved for $n\le2d$. We establish the corresponding result for $n=2d+1$; the nature of the lower bounds and the minimising polytopes are quite different in this case. As a byproduct, we also characterise all $d$-polytopes with $d+3$ vertices, and only one or two edges more than the minimum.
\end{abstract}

\maketitle
\section{Introduction}

Maximisation and minimisation problems are ubiquitous in mathematics. Here we focus on the problem of minimising an important invariant, namely the number of faces of a given dimension, over the class of polytopes of a given dimension with a given number of vertices. Such lower bound problems have attracted the attention of numerous researchers over the years \cite{KleNevNov19,Bar73, McM71,Xue21}.

A polytope $P$ of dimension $d$ is a \textit{$d$-polytope} and a $k$-dimensional face of the polytope is a $k$-face. A \textit{facet} is a $(d-1)$-face, a \textit{ridge} is a $(d-2)$-face, an \textit{edge} is a 1-face, and a vertex is a 0-face.  The number of $k$-faces of  $P$ is denoted by $f_k(P)$.

The \textit{graph} of a polytope is the graph formed by the vertices and edges of the polytope. A vertex in a $d$-polytope is \textit{simple} if it is contained in  exactly $d$ edges; otherwise it is \textit{nonsimple}. And a polytope is \textit{simple} if all its vertices are simple; otherwise it is \textit{nonsimple}. A polytope is \textit{simplicial} if all its facets are simplices.

We require some familiarity with duality of polytopes: two polytopes $P$ and $Q$ of the same dimension are called {\it dual} to one another if there is an inclusion reversing bijection between the set of all faces of $P$ and the set of all faces of $Q$. This implies that vertices of $P$ correspond to facets of $Q$, edges of $P$ correspond to ridges of $Q$, and so on. The existence of duals is assured by the \textit{polar} construction: assuming (without loss of generality) that the interior of $P$ contains the origin, $P^*$ can be defined as
 \[P^*=\left\{y\in \R^d\middle|\; x\cdot y\le1\; \text{for all $x$ in $P$}\right\}, \]
 where $\cdot$ denotes as usual the dot product of two vectors. It is routine to show that a polytope is simplicial if and only if any polytope dual to it is simple. For further details see \cite[Sec. 3.4]{Gru03} or \cite[Sec. 2.3]{Zie95}.

The earliest and best known lower bound theorem is due to Barnette \cite{Bar71,Bar73}. We state it in its original formulation for simple polytopes.

\begin{proposition}[Simple polytopes, {\cite{Bar71,Bar73}}]\label{prop:simple} Let $d\ge 2$ and let $P$ be a simple $d$-polytope with $f_{d-1}$ facets. Then
\begin{equation}\label{eq:simple}
 f_k(P)\ge\begin{cases}
(d-1)f_{d-1}-(d+1)(d-2),& \text{if $k=0$}; \\
\binom{d}{k+1}f_{d-1}-\binom{d+1}{k+1}(d-1-k),& \text{if $k\in [1,d-2]$}.
\end{cases}
\end{equation}
Moreover, for every value of $f_{d-1}\ge d+1$, there are simple $d$-polytopes for which equality holds.
\end{proposition}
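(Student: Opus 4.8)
The plan is to single out the case $k=d-2$ as the substantive one, obtain every other inequality from it by a double-counting induction on $d$, and then verify the equality cases on explicit examples. A convenient piece of bookkeeping is the $h$-vector: fix a linear functional $\varphi$ in general position on $\R^d$, orient each edge of $P$ towards the larger $\varphi$-value, and let $h_i$ be the number of vertices of $P$ with in-degree $i$. Because $P$ is simple, the vertex figure at each vertex is a simplex, so every subset of the $d$ edges at a vertex spans a face; each $k$-face has a unique $\varphi$-maximal vertex, at which it uses exactly $k$ descending edges, so $f_k(P)=\sum_i\binom{i}{k}h_i$. Reversing $\varphi$ gives $h_i=h_{d-i}$, while $h_0=h_d=1$; in particular $h_1=h_{d-1}=f_{d-1}(P)-d$. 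Writing $g:=f_{d-1}(P)-d$, this already indicates that equality should occur exactly when $h=(1,g,\dots,g,1)$, for then $f_k(P)=\binom{d}{k+1}g+\binom{d}{k}$ for $k\ge1$ and $f_0(P)=(d-1)g+2$, which are precisely the values in \eqref{eq:simple}.

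The crux is the inequality $f_{d-2}(P)\ge d\,f_{d-1}(P)-\binom{d+1}{2}$. Dualising, with $Q=P^{*}$ simplicial and $f_{d-2}(P)=f_1(Q)$, $f_{d-1}(P)=f_0(Q)$, this asks that $f_1(Q)\ge d\,f_0(Q)-\binom{d+1}{2}$. For $d\le3$ it is elementary — for $d=3$ it is forced by Euler's relation together with $2f_1=3f_0$ — and for $d\ge4$ I would derive it from the generic $d$-rigidity of the graph of a simplicial $d$-polytope: a $d$-simplex is generically rigid, coning preserves rigidity so the closed star of each vertex is rigid, and rigid subframeworks glued along enough common vertices stay rigid, so one can assemble $Q$ from its vertex stars without losing rigidity; finally a generically $d$-rigid graph on $n\ge d$ vertices has at least $dn-\binom{d+1}{2}$ edges. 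Establishing this rigidity statement cleanly is the main obstacle of the whole argument; alternatively one cites Barnette's original combinatorial proof \cite{Bar71,Bar73}.

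To descend from $k=d-2$ to all $k\in[0,d-2]$ I induct on $d$, using that faces of simple polytopes are simple together with the incidence fact that in a simple $d$-polytope a $k$-face lies in exactly $d-k$ facets (so a vertex lies in $d$ and a ridge in $2$). Double-counting incident pairs (facet, $k$-face) over the facets $F$ of $P$ gives $(d-k)f_k(P)=\sum_F f_k(F)$ and $2f_{d-2}(P)=\sum_F f_{d-2}(F)$. Inserting the dimension-$(d-1)$ bound for each $f_k(F)$ and then the crux for $f_{d-2}(P)$ produces a bound $f_k(P)\ge c\,f_{d-1}(P)-e$ whose constants, by the identity $(d-k)\binom{d+1}{k+1}=(d+1)\binom{d}{k+1}$ and its relatives, work out to $c=\binom{d}{k+1}$, $e=\binom{d+1}{k+1}(d-1-k)$ for $k\ge1$ and to $c=d-1$, $e=(d+1)(d-2)$ for $k=0$. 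The recursion bottoms out at $d=k+2$ (the crux) and, when $k=0$, at $d=2$, where $f_0=f_1$.

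For the equality clause, I use the iterated truncations of a $d$-simplex. Truncating a simple vertex replaces it by a $(d-1)$-simplex facet, which raises $f_{d-1}$ by $1$, raises $f_k$ by $\binom{d}{k+1}$ for $1\le k\le d-2$ (these are the faces of the new simplex, and no old face is lost), and raises $f_0$ by $d-1$. Hence after $m$ truncations one has $f_{d-1}=d+1+m$ together with $f_k=\binom{d+1}{k+1}+m\binom{d}{k+1}$ and $f_0=d+1+m(d-1)$, and the same binomial identity shows these meet \eqref{eq:simple} with equality. Since $m$ ranges over all non-negative integers, equality is attained for every $f_{d-1}\ge d+1$, which finishes the proof. (These truncation polytopes are exactly the duals of the stacked simplicial polytopes.)
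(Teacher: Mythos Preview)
The paper does not prove this proposition at all; it is simply quoted from \cite{Bar71,Bar73} as Barnette's classical Lower Bound Theorem and used as a black box. So there is no paper proof to compare against.

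Your proposal is a sound reconstruction of a standard proof. The $h$-vector bookkeeping is correct, the double-counting induction on $d$ (using $(d-k)f_k(P)=\sum_F f_k(F)$ and $2f_{d-2}(P)=\sum_F f_{d-2}(F)$, then the crux for $f_{d-2}$) does yield exactly the claimed coefficients via $(d+1)\binom{d}{k+1}=(d-k)\binom{d+1}{k+1}$---I checked the arithmetic and it closes---and the truncation examples are precisely the duals of stacked polytopes, giving equality for every $f_{d-1}\ge d+1$ as required. The one point to flag is that the ``crux'' you isolate, $f_1(Q)\ge d\,f_0(Q)-\binom{d+1}{2}$ for simplicial $Q$, is the entire content of Barnette's theorem; the rigidity argument you sketch is Kalai's 1987 proof, not Barnette's original combinatorial one, and making it precise (in particular the gluing step for vertex stars) is genuinely nontrivial. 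You acknowledge this honestly, so as a proof outline the proposal is acceptable, but be aware that the inductive superstructure is the easy part and the crux is where all the work lives.
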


Dually, Barnette's Theorem also gives exact lower bounds for the numbers of faces of each positive dimension for the class of simplicial polytopes, with any fixed number of vertices. However the corresponding problem for {\it general} polytopes has not received so much attention. It is necessary to break this problem down according to the number of vertices.

Gr\"unbaum \cite[Sec.~10.2]{Gru03} defined a function $\phi_k(d+s,d)$, for  $s\le d$, by:
\begin{equation}\label{eq:at-most-2d}
   \phi_k(d+s,d)=\binom{d+1}{k+1}+\binom{d}{k+1}-\binom{d+1-s}{k+1}.
\end{equation}
He conjectured that $\phi_k(d+s,d)$ gives the minimum number of $k$-faces of a $d$-polytope with $d+s$ vertices   \cite[Sec.~10.2]{Gru03}; this was recently proved in full generality by Xue \cite{Xue21}, who also characterised the unique minimisers for $k\in[1,d-2]$; see \cref{prop:at-most-2d}. This had been proved earlier for some restricted values of $k$ in \cite{PinUgoYosLBT}.

The minimisers for $\phi_k(d+s,d)$ are called triplices and were introduced in \cite[Sec.~3]{PinUgoYosLBT}.
The \textit{($s$,$d-s$)-triplex} $M(s,d-s)$ is defined as a  $(d-s)$-fold pyramid over a  simplicial $s$-prism for $s\in [1, d]$. In particular,  $M(1,d-1)$ is a $d$-simplex $\Delta(d)$ and $M(d,0)$ is a simplicial $d$-prism. The \textit{prism} over a polytope $Q$  is the product of $Q$ and a line segment, or any polytope combinatorially equivalent to it. By a \textit{simplicial $d$-prism} we mean a prism  over a $(d-1)$-simplex.

A simplicial $d$-prism has $d+2$ facets: two $(d-1)$-simplices and $d$ simplicial $(d-1)$-prisms. Its edges fall naturally into three types: edges of one simplex facet, edges of the other simplex facet, and edges with one vertex in each simplex facet. We remark that edges of the latter type cannot be skew. Indeed, any two such edges must determine a 2-face, and hence they are coplanar. It follows that all $d$ such edges are either parallel or contained in concurrent lines.

\begin{proposition}[$d$-polytopes with at most $2d$ vertices, \cite{Xue21}]\label{prop:at-most-2d} Let $d\ge 2$ and let $P$ be a  $d$-polytope with  $d+s$ vertices, where $s\le d$. Then
\[f_k(P)\ge \phi_k(d+s,d)\quad \text{for all $k\in[1,d-1]$}.\]
Also, if $f_k(P)=\phi_k(d+s,d)$ for some $k\in[1,d-2]$, then $P$ is the $(s,d-s)$-triplex.
\end{proposition}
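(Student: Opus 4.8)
The plan is to argue by induction on the dimension $d$, with the base cases $d=2$ and $d=3$ settled by direct inspection; for $d=2$ every polygon realises equality, and the range $[1,d-2]$ of the equality clause is then empty, while $d=3$ requires only a short finite check at $k=1$. So assume $d\ge 4$, that the statement holds in all lower dimensions, and let $P$ be a $d$-polytope with $d+s$ vertices. We may suppose $s\ge 2$, since $s=1$ forces $P=\Delta(d)$ and both assertions are then immediate. The algebraic engine of the argument is the \emph{pyramid recursion}
\[
\phi_k(d+s,\,d)=\phi_k\bigl((d-1)+s,\,d-1\bigr)+\phi_{k-1}\bigl((d-1)+s,\,d-1\bigr),
\]
which follows from \eqref{eq:at-most-2d} on applying Pascal's rule to each of its three binomial coefficients, and which mirrors the identity $f_k(\operatorname{pyr}Q)=f_k(Q)+f_{k-1}(Q)$ (with the convention $f_{-1}(Q)=1$). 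Consequently, if $P$ is a pyramid over a $(d-1)$-polytope $Q$ and $s\le d-1$, then $Q$ has $(d-1)+s$ vertices, the inequality for $P$ at level $k$ follows from the induction hypothesis for $Q$ at levels $k$ and $k-1$, and equality for $P$ at some $k\in[1,d-2]$ forces equality for $Q$ at both these levels; since one of $k-1,k$ lies in $[1,d-3]$, the inductive characterisation identifies $Q$ as the $(s,d-1-s)$-triplex and hence $P$ as the $(s,d-s)$-triplex. The one residual pyramidal subcase is $s=d$; here $M(d,0)$ is not a pyramid, so no extremal instance arises and only the strict inequality is needed, which a crude direct estimate for pyramids over $(d-1)$-polytopes with $2d-1$ vertices supplies.

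It remains to treat the case in which $P$ is \emph{not} a pyramid. If some facet $F$ of $P$ missed only one vertex $v$, then $P=\conv(F\cup\{v\})=\operatorname{pyr}_v F$, since $v\notin\operatorname{aff}F$; so every facet of $P$ misses at least two vertices. Choosing a facet $F$ with the most vertices, we get $f_0(F)\le d+s-2\le 2(d-1)$, so the induction hypothesis applies to $F$. Writing $f_0(F)=(d-1)+s'$ and splitting
\[
f_k(P)=f_k(F)+g_k,\qquad g_k:=\#\{\,k\text{-faces of }P\text{ not contained in }F\,\},
\]
the induction hypothesis yields $f_k(F)\ge\phi_k\bigl((d-1)+s',\,d-1\bigr)$, together with the triplex characterisation when the relevant index lies in $[1,d-3]$; so it suffices to show $g_k\ge\phi_k(d+s,d)-\phi_k\bigl((d-1)+s',\,d-1\bigr)$, with equality forcing $F$ to be a triplex and the vertices outside $F$ to be attached to it in the most economical fashion --- which, one checks, can happen only when $s=d$ and $P=M(d,0)$.

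\emph{The crux, and the step I expect to be the main obstacle, is this lower bound on $g_k$.} One must count, uniformly in $k$, the $k$-faces created by the $r\ge 2$ vertices of $P$ that lie outside $F$, and here the formal manipulation of binomial identities has to be supplemented by genuine polytope input: Balinski's theorem that the graph of $P$ is $d$-connected, the fact that each ridge lies in exactly two facets, and the resulting lower bounds on the degrees of the outside vertices and on the face numbers of their vertex figures. A convenient variant is to use the known fact that every $d$-polytope with at most $2d$ vertices has a simplex facet and to take $F$ to be such a facet: then $f_k(F)=\binom{d}{k+1}$ and, via the telescoping identity $\binom{d+1}{k+1}-\binom{d+1-s}{k+1}=\sum_{i=0}^{s-1}\binom{d-i}{k}$, the target becomes the more transparent inequality $g_k\ge\sum_{i=0}^{s-1}\binom{d-i}{k}$; but the genuine difficulty of bounding $g_k$ below is unchanged. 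Granting this estimate and unwinding the equality conditions, the pyramidal and non-pyramidal analyses together deliver both the inequality for every $k\in[1,d-1]$ and the identification of the minimisers.
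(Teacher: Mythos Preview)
The paper does not prove this proposition; it is quoted from \cite{Xue21} as background. The closest the paper comes is the proof sketch of Proposition~\ref{prop:number-faces}, which records Xue's main counting device.

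Your proposal is an outline rather than a proof, and you say as much: you isolate the bound $g_k\ge\sum_{i=0}^{s-1}\binom{d-i}{k}$ as ``the crux'' and then proceed by ``granting this estimate''. But that estimate \emph{is} the theorem. The pyramid recursion, the reduction to the non-pyramidal case, and the choice of a simplex facet are routine scaffolding; the entire content of Xue's result is a mechanism for bounding from below the number of $k$-faces meeting a prescribed set of vertices, and you have not supplied one. Gesturing at Balinski's theorem and vertex figures does not close the gap: one must produce, for each of the $s$ vertices $v_1,\ldots,v_s$ outside the simplex facet, a separate packet of $k$-faces of size at least $\binom{d-i+1}{k}$, and arrange that these packets are pairwise disjoint.

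For reference, Xue's argument (summarised in the paper as Proposition~\ref{prop:number-faces}) does exactly this by building a descending chain $P=F_1\supsetneq F_2\supsetneq\cdots\supsetneq F_s$ with $\dim F_i=d-i+1$, $v_i\in F_i$, and $v_1,\ldots,v_{i-1}\notin F_i$. The $k$-faces of $F_i$ through $v_i$ correspond to the $(k-1)$-faces of the vertex figure $F_i/v_i$, whose face numbers are controlled by the induction hypothesis; summing gives $\sum_i\binom{d-i+1}{k}$, and the chain structure ensures the contributions are disjoint. The construction of this chain and the control of the degrees $\deg_{F_i}(v_i)$ are where the work lies; your outline does not address either. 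A secondary loose end is the pyramidal subcase $s=d$: the base then has $2d-1>2(d-1)$ vertices, so the induction hypothesis does not apply to it, and the ``crude direct estimate'' you invoke is never specified.
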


Note that the case $k=d-1$ is not included in the last part of \cref{prop:at-most-2d}, as it behaves a bit differently. The triplex $M(s,d-s)$ has exactly $\phi_{d-1}(d+s,d)=d+2$ facets, but it may not be the only minimiser. For example, a pyramid over $\Delta(2,2)$ (defined below) is a 5-polytope with ten vertices and seven facets, but it is not a triplex.

The version of \cref{prop:at-most-2d}  for facets (i.e. the case $k=d-1$) was investigated thoroughly by McMullen \cite{McM71}, who considered $d$-polytopes with even more than $2d$ vertices. He showed that the minimal number of facets, for polytopes satisfying $2d\le f_0\le\frac{1}{4}d^2+2d$,   is either $d+2$ or $d+3$, depending on number theoretic properties of $d$ and $f_0$. For the case of $2d+1$ vertices, we have $\psi_{d-1}(d)=d+3$ when $d$ is prime, while $\psi_{d-1}(d)=d+2$ when $d$ is composite. Our main result, \cref{thm:2dplus1-bound}, shows that this sort of dichotomy is quite common.

Hitherto, apart from the work of McMullen, there has not even been a conjecture for $d$-polytopes with more than $2d$ vertices. It may have seemed that such polytopes were too chaotic to have much structure. We
show that this is not so
by solving the problem under discussion for $d$-polytopes with $2d+1$ vertices. That is, we establish the minimum number $\psi_k(d)$ of $k$-faces of a $d$-polytope with $2d+1$ vertices, and  characterise the minimisers (\cref{thm:2dplus1-bound}). We require different techniques for this new case, and the nature of the minimising polytopes is also quite different. (For more vertices, the technical difficulties increase. However, we have also found the minimal number of edges of $d$-polytopes with $2d+2$ vertices, and characterised the minimisers \cite{PinUgoYos2d+2}.)

With one exception in dimension three, namely the polytope $\Sigma(3)$,  each minimiser for  $\psi_k(d)$ is either a $d$-pentasm or a certain $d$-polytope with $d+2$ facets. Accordingly, we need to describe these classes of examples. We do this briefly here, with more details in \cref{subsection:extremalexamples} below.

The \textit{pentasm} $Pm(d)$ in dimension $d$, or \textit{$d$-pentasm}, was defined in \cite[Sec. 4]{PinUgoYosLBT}; it can be obtained by  truncating a simple vertex from the triplex $M(2,d-2)$, i.e. by intersecting this triplex with a closed halfspace which contains all the nonsimple vertices and three of the four simple vertices. For more details about truncation, see \cref{subsection:trunc} below.

The {\it(Minkowski) sum} of two polytopes $Q+R$ is defined to be $\{x+y: x\in Q, y\in R\}$. The simple polytope $\Delta({r,s})$ (with $r,s>0$) is defined as the sum of an $r$-dimensional simplex and an $s$-dimensional simplex, lying in complementary subspaces; it has $r+s+2$ facets.  It turns out that $t$-fold pyramids over such simple polytopes, which we denote by $\Delta^{t}(r,s)$,  are the only $d$-polytopes with $d+2$ facets \cite{McM71}; see \cref{lem:dplus2facets} below.

Finally, in  $\mathbb{R}^3$, we denote by $\Sigma(3)$ any polytope that is combinatorially equivalent to the convex hull of
 $\left\{{0},{e_1},{e_1}+{e_3},{e_2},{e_2}+{e_3},{e_1}+{e_2},{e_1}+{e_2}+2{e_3}\right\}$,
where ${e_i}$ is the standard $i^{th}$  unit vector. Like the 3-pentasm, this polytope has seven vertices and eleven edges. (There is a higher dimensional version of this polytope, but it has $3d-2$ vertices \cite[p. 2016]{PinUgoYosEXC} and so is only of interest in this paper when $d=3$.)

Examples of all the aforementioned polytopes are depicted in \cref{fig:2dplus1}. The next theorem summarises our results; the \textit{$f$-vector} of a $d$-polytope $P$, denoted $f(P)$, is the sequence $(f_{0}(P),\ldots,f_{d-1}(P))$ of the numbers of faces of $P$ of different dimensions.

\begin{theorem}
\label{thm:2dplus1-bound} Let $d\ge3$ and consider the class of $d$-polytopes with $2d+1$ vertices. Fix $k\in[1,d-2]$. Then the following hold.
\begin{enumerate}
    \item Let $d=3$. If $P$ is $\Sigma(3)$ or a pentasm, then $f(P)=(7,11,6)$. Otherwise, $f_1(P)>11$ and $f_2(P)>6$.

    \item If $d\ge4$ and $d$ is prime, then the $d$-pentasm is the unique minimiser of $f_k(P)$.
    \item If $d\ge4$ and $d$ is composite, then the minimiser of $f_k(P)$ is either a $d$-pentasm or a $(d-r-s)$-fold pyramid over $\Delta({r,s})$, where $d=rs$ is a nontrivial factorisation of $d$.
\end{enumerate}
\end{theorem}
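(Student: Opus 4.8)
\emph{Proof sketch.} The plan is to pass to the dual and argue by the number of facets. Write $Q=P^{*}$, so $Q$ is a $d$-polytope with $2d+1$ facets; put $m:=f_{d-1}(P)=f_{0}(Q)$. Since $f_{k}(P)=f_{d-1-k}(Q)$ and $k\mapsto d-1-k$ is an involution of $[1,d-2]$, the theorem is equivalent to a lower bound on $f_{d-1-k}(Q)$ over all $d$-polytopes $Q$ with $2d+1$ facets, together with a description of the minimisers. The point of this reformulation is that whenever $m\le 2d$ the polytope $Q$ has at most $2d$ vertices, so \cref{prop:at-most-2d} applies to it; the argument is then a case analysis on $m$. (The techniques that are genuinely new, as promised, are the refinements needed when Xue's bound is not attained and when $m$ is large.)

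First I would compute the face numbers of the polytopes in the statement. A $t$-fold pyramid over $\Delta(r,s)$ — combinatorially a product of two simplices — has $(r+1)(s+1)+t$ vertices and $r+s+2+t$ facets, so in dimension $d=r+s+t$ it has $2d+1$ vertices and $d+2$ facets exactly when $rs=d$; with $r,s\ge2$ this is impossible for prime $d$ and isolates the polytopes $\Delta^{d-r-s}(r,s)$ when $d$ is composite, and the product formula together with the effect of pyramiding gives $f_{k}(\Delta^{d-r-s}(r,s))$ explicitly. Truncating a simple vertex of $M(2,d-2)$ (see \cref{subsection:trunc}) replaces that vertex by a $(d-1)$-simplex and alters each $f_{k}$ by a known amount, so $Pm(d)$ has $2d+1$ vertices, $d+3$ facets, and a computable $f$-vector, with $f(Pm(3))=(7,11,6)$; a direct check from the coordinates gives $f(\Sigma(3))=(7,11,6)$ as well. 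Taking the minimum of these values over the relevant examples defines the candidate bound $\psi_{k}(d)$.

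Now the case analysis. If $m=d+1$ then $P$ is a simplex, which has only $d+1$ vertices, so this does not occur; if $m=d+2$ then $P=\Delta^{t}(r,s)$ by \cref{lem:dplus2facets}, and the vertex count restricts $P$ to the polytopes found above (hence to composite $d$). The crucial case is $m=d+3$: here $Q$ has $d+3$ vertices, and \cref{prop:at-most-2d} gives $f_{d-1-k}(Q)\ge\phi_{d-1-k}(d+3,d)$ with equality, for $k\in[1,d-2]$, only if $Q=M(3,d-3)$; but $M(3,d-3)$ has $d+2$ facets, not $2d+1$, so the inequality is always strict for our $Q$ and we must identify the second-smallest value of $f_{j}(Q)$ over $d$-polytopes with $d+3$ vertices, for each $j=d-1-k\in[1,d-2]$, together with the near-minimisers. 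This is precisely the classification flagged in the abstract: one shows that a $d$-polytope with $d+3$ vertices and only one or two edges above the minimum $\phi_{1}(d+3,d)$ belongs to a short explicit list, determines which members of that list have $2d+1$ facets — equivalently, which of them are duals of $(2d+1)$-vertex polytopes — and finds exactly $Pm(d)^{*}$ (and $\Sigma(3)^{*}$ when $d=3$), whose face numbers then match $\psi_{k}(d)$. Combined with the $m=d+2$ case and the computation above, this settles all $m\le d+3$.

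It remains to rule out $m\ge d+4$. For $d+4\le m\le 2d$, \cref{prop:at-most-2d} gives $f_{k}(P)=f_{d-1-k}(Q)\ge\phi_{d-1-k}(m,d)\ge\phi_{d-1-k}(d+4,d)$, and equality at $m=d+4$ would force $Q=M(4,d-4)$, again with the wrong facet count; so this case is excluded once one checks the inequality $\phi_{d-1-k}(d+4,d)\ge\psi_{k}(d)$, a comparison of binomial coefficients in which the increments of $\phi$ beyond $s=2$ outweigh the small excess of $\psi_{k}(d)$ over $\phi_{d-1-k}(d+2,d)$. For $m\ge 2d+1$ one falls back on a coarse estimate: the facet–ridge incidence already gives $f_{d-2}(P)=f_{1}(Q)\ge dm/2>\psi_{d-2}(d)$, and an iterated incidence count handles the remaining $k$. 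For $d=3$ everything reduces to the elementary fact that the only $7$-vertex $3$-polytopes with $11$ edges are $\Sigma(3)$ and $Pm(3)$, which one checks by enumerating $3$-connected planar graphs via Steinitz's theorem. I expect the hard part to be the case $m=d+3$: obtaining the classification of $d$-polytopes with $d+3$ vertices and near-minimal face numbers, and in particular propagating the edge classification to all $j\in[1,d-2]$, so as to exclude every would-be minimiser with $d+3$ facets other than the pentasm and, in dimension three, $\Sigma(3)$.
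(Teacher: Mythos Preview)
Your dual approach is exactly what the paper does for the single case $k=d-2$ (its \cref{prop:ridges}): dualise, classify $d$-polytopes with $d+3$ vertices and at most $\phi_1(d+3,d)+2$ edges via Gale diagrams, and check which of these have $2d+1$ facets. The numerical coincidence that makes this work is $f_{d-2}(Pm(d))=\phi_1(d+3,d)+2$, so ``one or two edges above minimum'' is precisely the right window, and for $m\ge d+4$ one has $\phi_1(d+4,d)\ge\psi_{d-2}(d)$ as you claim.

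For $k\le d-3$, however, the argument breaks at the step ``once one checks the inequality $\phi_{d-1-k}(d+4,d)\ge\psi_{k}(d)$.'' This inequality is false. Take $d=6$, $k=1$: then $\phi_{4}(10,6)=\binom{7}{5}+\binom{6}{5}=27$ while $\psi_{1}(6)=f_{1}(Pm(6))=41$, a shortfall of $14$; the case is not vacuous, since the capped prism $CP(3,6)$ has $13$ vertices and $10$ facets. The same defect infects the $m=d+3$ case: the gap $\psi_{k}(d)-\phi_{d-1-k}(d+3,d)$ equals $2$ only when $k=d-2$, and for $d=6$, $k=1$ it is again $14$. So ``propagating the edge classification to all $j$'' is not a refinement but essentially the whole theorem restricted to $m=d+3$, and the cases $m\ge d+4$ are not disposed of at all. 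The root cause is that Xue's bound on $Q$ uses only $f_0(Q)$; dualising discards the constraint $f_{d-1}(Q)=2d+1$, which is where all the strength lies. The paper's actual proof for $k\in[2,d-3]$ works directly on $P$: it splits on whether $P$ has a nonpyramidal nonsimple vertex, handling the ``no'' case via Barnette's simple lower bound (\cref{lem:pyramids-dplus3-facets}) and the ``yes'' case by an induction on $d$ that applies Xue's face-counting technique to a well-chosen facet (\cref{lem:nonpyramidal}, \cref{thm:main}). Your argument for $k=d-2$, and the previously known edge case $k=1$, serve as base inputs to that induction.
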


The case $k=1$ of \ \cref{thm:2dplus1-bound} was  settled in \cite{PinUgoYosLBT}; we restate this result here as \cref{prop:edges}. As for which polytope is the actual minimiser in case (iii), we formulate a more precise conjecture at the  end of the paper. Generally speaking, the pentasm is the minimiser for low values of $k$.

\begin{figure}
    \centering
    \includegraphics[scale=.9]{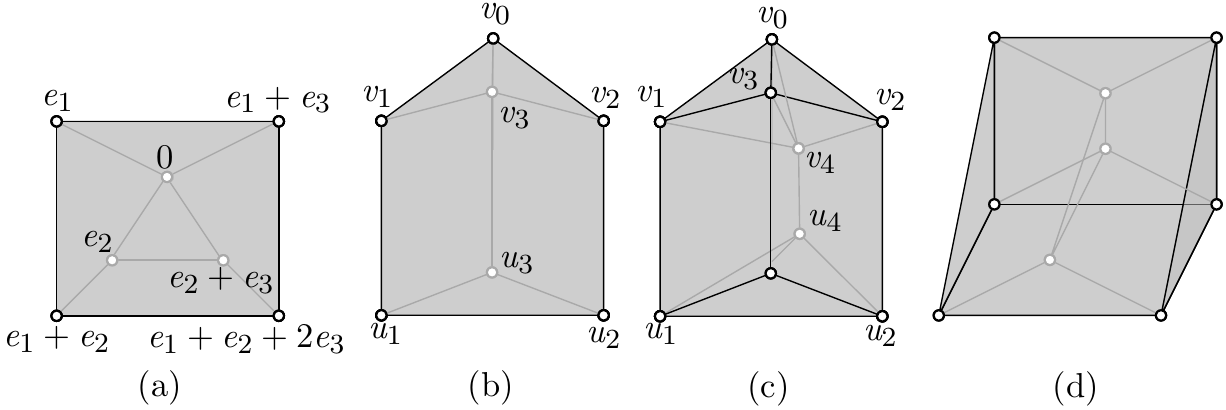}
    \caption{Schlegel diagrams of extremal polytopes with $2d+1$ vertices.  (a) $\Sigma(3)$. (b) 3-pentasm. (c) 4-pentasm. (d) $\Delta({2,2})$.}
    \label{fig:2dplus1}
\end{figure}

Let us now give a brief outline of the paper. In Sec. 2 we present some more background material: the concept of truncation, some essential examples of polytopes, previous lower bound bound theorems, and the fundamental technique of Xue \cite{Xue21}. The case $k=d-2$ of our main result needs to be established first, and so is the focus of Sec. 3. It requires the use of Gale diagrams, and leads to a result of independent interest: the characterisation of $d$-polytopes with $d+3$ vertices and at most $\h(d^2+5d-2)$ edges, i.e. two edges more than the known lower bound. This gives us the machinery we need to establish our main result in Sec. 4. The proof of the main result divides naturally into two cases: first for polytopes that have
a nonpyramidal, nonsimple vertex, and then for polytopes in which every vertex is either pyramidal or simple.

\section{Preliminaries}

Unless otherwise stated the notation and terminology follows  \cite{Zie95}. This section gathers a number of relatively recent results that we will need to prove our main theorem.

\subsection{Truncation of polytopes}\label{subsection:trunc}

Recall that a vertex in a $d$-polytope $P$ is  simple if and only if it is contained in exactly $d$ facets. A nonsimple vertex in $P$ may be simple in a proper face of  $P$; we often need to make this distinction. Indeed, every vertex is simple in every 2-face which contains it. For an edge $e=xy$ of any graph,  one says that the vertices $x$ and $y$ are \textit{adjacent} or \textit{neighbours}.

A fundamental tool for the construction of new polytopes is the truncation of a face \cite[p.~76]{Bro83}. An extension of this idea is the truncation of a set of vertices which do not necessarily form a face. This idea is implicit in \cite[Sec. 10.4]{Gru03}. Let $H$ be a hyperplane intersecting the interior of $P$ and containing no vertex of $P$, and let $H^+$ and $H^-$ be  the two closed half-spaces bounded by $H$. Set $P':=H^+\cap P$. Denoting by $X$ the set of vertices of $P$ lying in $H^-$,  the polytope $P'$ is said to be obtained by {\it truncating} the set $X$ by $H$. One often says that $P$ has been {\it sliced} or {\it cut} at $X$. We do not assume that $X$ forms the vertex set of any face of $P$. 

The next result establishes the simplicity of the ``new facet" of a truncated polytope, under quite general hypotheses.

\begin{proposition}\label{prop:trunc-neighbours-simple} Let $P$ be a $d$-polytope with vertex set $V$ and let $X\subset V$ be a collection of vertices for which the convex hulls of $X$ and $V\setminus X$ are disjoint. Suppose that $H$ is a hyperplane strictly separating $X$ and $V\setminus X$, and  that $H^+$ and $H^-$ are the two closed half-spaces bounded by $H$.
 Further suppose that for each edge $vw$ of $P$  with $v\in H^+$ and $w\in H^-$, at least one of the vertices $v,w$ is simple in $P$.

Denote by $P':=H^+\cap P$ the polytope obtained by truncating $X$ by $H$. Then every vertex in the facet $H\cap P$ is simple in $P'$, and thus the facet $H\cap P$ is a simple $(d-1)$-polytope.\end{proposition}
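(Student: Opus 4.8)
The plan is to identify the vertices of the new facet $H \cap P$ explicitly, and then count the facets of $P'$ through each one. Let me think about what the vertices of $H \cap P$ are. Since $H$ strictly separates $X$ from $V \setminus X$, and $H$ meets the interior of $P$, the new facet $F := H \cap P$ is a genuine $(d-1)$-polytope. Its vertices arise precisely as the points $e \cap H$ where $e = vw$ is an edge of $P$ with $v \in H^+$ and $w \in H^-$ (i.e. $w \in X$, $v \notin X$). So I should start by establishing this standard fact: the vertices of $F$ are in bijection with the "cut edges" of $P$.

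Next, fix such a cut edge $e = vw$ with $v \in H^+$, $w \in X$, and let $z := e \cap H$ be the corresponding vertex of $P'$. I want to show $z$ is simple in $P'$, i.e. $z$ lies in exactly $d$ facets of $P'$. The facets of $P'$ through $z$ are of two types: (a) the new facet $F$ itself; and (b) facets of the form $G' = H^+ \cap G$ where $G$ is a facet of $P$ containing the edge $e$. A facet $G$ of $P$ contributes a facet of $P'$ through $z$ exactly when $G$ contains $e$ — because $z \in G$ forces $z$ to lie on the line through $e$ inside $G$, and since $z$ is strictly between $v$ and $w$, both endpoints of $e$ must be in $G$. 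Conversely every facet of $P$ containing $e$ survives truncation (it is not entirely cut off, since $v$ remains) and yields a facet of $P'$ through $z$. So the count is: $z$ lies in exactly $1 + |\{\text{facets of } P \text{ containing } e\}|$ facets of $P'$.

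Now invoke the hypothesis: at least one of $v, w$ is simple in $P$. In any $d$-polytope, an edge contained in a simple vertex lies in exactly $d-1$ facets (the facets through that simple vertex other than... wait — a simple vertex lies in exactly $d$ facets, and an edge at that vertex is the intersection of exactly $d-1$ of them). More generally, every edge of a $d$-polytope lies in at least $d-1$ facets, with equality forced when one of its endpoints is simple. Hence the number of facets of $P$ containing $e$ equals exactly $d-1$, giving $z$ exactly $1 + (d-1) = d$ facets of $P'$; that is, $z$ is simple in $P'$. Since $z$ was an arbitrary vertex of $F = H \cap P$, every vertex of the new facet is simple in $P'$, and a $(d-1)$-polytope all of whose vertices are simple is a simple polytope, completing the proof.

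The main obstacle is the clean justification of the facet correspondence in step (b): I must argue carefully that (i) a facet of $P$ containing $e$ is not destroyed by the truncation and restricts to a genuine facet of $P'$ through $z$, (ii) distinct such facets of $P$ give distinct facets of $P'$, and (iii) the only facet of $P'$ through $z$ that is \emph{not} of this form is the new facet $F$ — in particular no facet $G$ of $P$ with $z \in H^+ \cap G$ but $e \not\subseteq G$ can occur, which follows from the collinearity argument (if $z \in G$ then the whole segment $vw$, being the unique edge direction through $z$ realising points on both sides, lies in $G$). The separation hypothesis on $\operatorname{conv} X$ and $\operatorname{conv}(V\setminus X)$ is what guarantees $P' = H^+ \cap P$ is well-behaved and that $F$ is a single facet rather than something degenerate; the edge-simplicity hypothesis is exactly what pins the local facet count down to $d-1$ rather than merely $\geq d-1$.
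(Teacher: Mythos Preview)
Your proposal is correct and follows essentially the same approach as the paper: identify each vertex of $H\cap P$ as the intersection of $H$ with a cut edge $e$, use the simplicity of one endpoint to conclude $e$ lies in exactly $d-1$ facets of $P$, and then count the facets of $P'$ through the new vertex as these $d-1$ truncated facets plus the new facet $H\cap P$. Your version is more explicit about the facet correspondence (your points (i)--(iii)), which the paper states without elaboration, but the argument is the same.
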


\begin{proof}
Any vertex $u$ in the facet $H\cap P$ is the intersection of $H$ and an edge $e$ of $P$ with one endpoint in $H^+$ and the other in $H^-$.  Since one of these endpoints is simple, it is contained in only $d$ facets of $P$, so the edge $e$ itself is contained in exactly $d-1$ facets of $P$.   Consequently, the facets of $P'$ containing the vertex $u$  are precisely the facets of $P$ containing the edge $e$ plus the facet $H\cap P$. Thus the vertex $u$ in $P'$ is contained in exactly $d$ facets of $P'$, and is simple in $P'$.
\end{proof}

With the notation of \cref{prop:trunc-neighbours-simple}, if $X=\{v\}$ then the facet $P\cap H$ of $P'$ is called the \textit{vertex figure} $P/v$ of $P$ at $v$. Then there is a bijection between the $k$-faces of $P$ that contains $v$, and the $(k-1)$-faces of $P/v$, \cite[Theorem 16]{McMShe71} or \cite[Prop.~2.4]{Zie95}. Some routine corollaries of  \cref{prop:trunc-neighbours-simple}  read as follows.

\begin{lemma}\label{lem:examples-all-neighbours-simple} Let $P$ be a $d$-polytope.
\begin{enumerate}
    \item Let $v$ be a simple vertex in $P$, and $P'$ the polytope obtained by truncating $v$. Then $f_0(P')=f_0(P)+d-1$ and $f_k(P')=f_k(P)+\binom{d}{k+1}$ for $k\ge1$.
    \item  If some 2-face of a simple 4-polytope $P$ has $m+1$ or more vertices, then there is another simple 4-polytope $P'$ with \[f_0(P')=f_0(P)+m+2\;\text{and}\; f_3(P')=f_3(P)+1.\]
\end{enumerate}
\end{lemma}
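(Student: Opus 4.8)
The plan is to realise $P'$ as a truncation in each part and read off the face numbers using \cref{prop:trunc-neighbours-simple} together with the vertex figure bijection. For part (i), truncate the simple vertex $v$: let $H$ strictly separate $v$ from $V\setminus\{v\}$ and put $P'=H^{+}\cap P$. Since $v$ is simple it lies in exactly $d$ facets of $P$, so $P/v=H\cap P$ is a $(d-1)$-polytope with $d$ facets, i.e. a $(d-1)$-simplex. I would then split the $k$-faces of $P'$ into three disjoint classes: the $k$-faces of $P$ avoiding $v$, of which there are $f_k(P)-\binom{d}{k}$ because the $k$-faces through $v$ biject with the $(k-1)$-faces of $P/v$; the truncated faces $F\cap H^{+}$ with $F$ a $k$-face through $v$, contributing another $\binom{d}{k}$; and the $k$-faces of the new facet $H\cap P\cong\Delta(d-1)$, contributing $\binom{d}{k+1}$. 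Summing gives $f_k(P')=f_k(P)+\binom{d}{k+1}$ for $k\ge1$; the identical count at $k=0$, where the middle class is empty, yields $f_0(P')=f_0(P)-1+d$. I expect nothing delicate here.

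For part (ii), let $Q$ be the $2$-face with $p\ge m+1$ vertices. As $P$ is simple, $Q$ is a convex $p$-gon, all of whose vertices are simple in $P$, and at each vertex $w$ of $Q$ precisely two of the four edges of $P$ at $w$ lie in $Q$ while the other two leave $Q$ (an edge of $P$ with both endpoints in $V(Q)$ is a chord of $Q$, hence a face of $Q$, hence a side of the polygon). My construction is to take $X=\{w_1,\dots,w_m\}$, a run of $m$ consecutive vertices along $\partial Q$, and set $P'=H^{+}\cap P$, the truncation of $X$. To invoke \cref{prop:trunc-neighbours-simple} I must check that $\conv X$ and $\conv(V\setminus X)$ are disjoint; the edge hypothesis there is vacuous since $P$ is simple. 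The key point: because $Q$ is a face of $P$, every vertex outside $V(Q)$ lies strictly on the inner side of the supporting hyperplane $\{a\cdot y=b\}$ of $Q$, so a convex combination of vertices of $V\setminus X$ attains $a\cdot y=b$ only if it uses no vertex outside $V(Q)$; hence $\conv(V\setminus X)$ meets the plane of $Q$ exactly in $\conv(V(Q)\setminus X)$. Since $\conv X\subseteq Q$, we get $\conv X\cap\conv(V\setminus X)=\conv X\cap\conv(V(Q)\setminus X)$, and these convex hulls of complementary arcs of the convex polygon $Q$ are separated by a line in the plane of $Q$. A strictly separating hyperplane for $X$ and $V\setminus X$ therefore exists, and may be perturbed off the finitely many vertices of $P$.

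It remains to count and check simplicity. Every facet of $P$ has a vertex outside $X$ (a facet is $3$-dimensional, so cannot lie in the plane of $Q$), hence survives in $P'$; distinct facets of $P$ remain distinct in $P'$ because their intersection is a face of dimension at most $2$; adding the one new facet $H\cap P$ gives $f_3(P')=f_3(P)+1$. A surviving vertex lies in exactly the four facets of $P$ that contained it (none destroyed, and the new facet misses it), so it is simple in $P'$; the vertices of $H\cap P$ are simple in $P'$ by \cref{prop:trunc-neighbours-simple}; thus $P'$ is simple. Finally, the vertices of $H\cap P$ are the points $H\cap e$ for edges $e$ of $P$ crossing $H$: at each $w_i$ the two edges leaving $Q$ cross $H$ (their far endpoints lie outside $X$), giving $2m$ such edges, and the two edges of $Q$ joining the run $X$ to $V(Q)\setminus X$, one at $w_1$ and one at $w_m$, cross $H$ as well; these $2m+2$ edges are distinct, so $f_0(P')=f_0(P)-m+(2m+2)=f_0(P)+m+2$. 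The only real obstacle is in part (ii): seeing that one should truncate a \emph{run of consecutive vertices} of the large $2$-face (truncating the whole face would add $p$, not $m+2$, vertices), and then verifying disjointness via the fact that $Q$ being a genuine face confines $\conv(V\setminus X)$ to meet the plane of $Q$ only inside $Q$; the rest is routine bookkeeping of crossing edges.
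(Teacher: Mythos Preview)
Your proposal is correct and follows essentially the same approach as the paper. For part (ii) in particular, you truncate a run of $m$ consecutive vertices of the large $2$-face and count the $2m+2$ crossing edges, exactly as the paper does; you supply more detail (the disjointness of $\conv X$ and $\conv(V\setminus X)$, the survival of all old facets, and the simplicity of $P'$) where the paper simply asserts these facts or leaves them implicit via \cref{prop:trunc-neighbours-simple}.
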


\begin{proof}
The proof of (i) is a  well-known direct consequence of \cref{prop:trunc-neighbours-simple}.

(ii) Let $X:=\left\{v_1,\ldots,v_m\right\}$ be a set of $m$ vertices in this 2-face  with $v_i$ adjacent to $v_{i+1}$ for each meaningful $i$. The  convex hull of $X$ is clearly disjoint from the convex hull  of the other vertices $Y$ of $P$, and so there is a hyperplane $H$ separating these two sets. We let $H^+$ be the closed halfspace determined by $H$ that contains $Y$ and let $P'$ be the polytope $P\cap H^+$.

Since $v_1$ and $v_m$ both have three neighbours in $Y$, while $v_2,\ldots,v_{m-1}$ each have two neighbours in $Y$, there are altogether $3\times 2+2(m-2)$ edges between $X$ and $Y$, whence there are $3\times 2+2(m-2)$ vertices in $P'$ that are not in $P$. In addition, the $m$ vertices of $X$  are not in $P'$, so $f_0(P')=f_0(P)+m+2$. Finally, we have that $f_3(P')=f_3(P)+1$.
\end{proof}

In particular, truncation of a simple polytope gives us another simple polytope with more vertices.

\subsection{Pentasms and polytopes with $d+2$ facets}\label{subsection:extremalexamples} The definition of truncation, together with \cref{prop:at-most-2d} and \cref{lem:examples-all-neighbours-simple}, easily yields the number of $k$-faces of a pentasm. For vertices we clearly have $f_0(Pm(d))=2d+1$. For higher dimensional faces

\begin{equation}
    \label{eq:pentasm}
    f_k(Pm(d))=
    \binom{d+1}{k+1}+\binom{d}{k+1}+\binom{d-1}{k},\qquad\text{if $k\ge 1$}.\\
\end{equation}

In one concrete realisation, the $d$-pentasm is the convex hull of the vectors ${0}$, ${e_i}$ for $i\in[1,d]$, and ${e_1}+{e_2}+{e_i}$ for  $i\in[1,d]$. The $d$-pentasm has $d+3$ simple vertices and $d-2$ vertices with degree $d+1$. The $d$-pentasm can also be described as the Minkowski sum of a $d$-simplex and a line segment that is parallel to one 2-face of the simplex but not parallel to any edge.
We list the facets of a pentasm in  more detail; they were first described in  \cite[Sec.~4]{PinUgoYosLBT}.
\begin{remark}\label{rem:pentasm-facets} We can label the vertices of any pentasm as $u_1,\ldots,u_d$, $v_0,v_1,\ldots,v_d$, in such a way that the edges are $u_iv_i$ for $1\le i\le d$, $u_iu_j$ for $1\le i<j\le d$, and $v_iv_j$ for $0\le i<j\le d$ except when $(i,j)=(1,2)$ (\cref{fig:2dplus1}(b)-(c)).
The $d$-dimensional pentasm has  precisely $d+3$ facets:
\begin{enumerate}
\item $d-2$ pentasms of lower dimension (for each $i\in[3,d]$, the face generated by all vertices except $u_i$ and $v_i$ is a pentasm facet),

\item two simplicial prisms (one generated by all vertices except $u_1,v_1,v_0$  and the other generated by all vertices except $u_2,v_2,v_0$),

\item and three simplices (one generated by all $u_i$, another generated by all $v_i$ except $v_1$, and the third generated by all $v_i$ except $v_2$.
\end{enumerate}
\end{remark}

In general, $d$-pentasms have the minimum number of edges of $d$-polytopes with $2d+1$ vertices.

\begin{proposition}
\cite[Theorem 14]{PinUgoYosLBT}\label{prop:edges}  The $d$-polytopes with $2d + 1$ vertices and $d^{2} + d-1$ or fewer edges are as follows.
\begin{enumerate}
\item  For $d = 3$, there are exactly two polyhedra with 7 vertices and 11 edges; $Pm(3)$ and $\Sigma(3)$. None
have fewer edges.
\item  For $d = 4$, a sum of two triangles $\Delta(2,2)$ is the unique polytope with 18 edges, and  $Pm(4)$ is the
unique polytope with 19 edges. None have fewer edges.
\item  For $d\ge 5$,  $Pm(d)$ is the unique $d$-polytope with $d^{2} + d-1$  edges. None have fewer edges.
\end{enumerate}

\end{proposition}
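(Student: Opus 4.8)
The plan is to work with the \emph{excess degree} $\xi(P):=2f_1(P)-d\,f_0(P)=\sum_{v}(\deg v-d)$, which is nonnegative and vanishes exactly for simple polytopes. Because $f_0(P)=2d+1$, we have $f_1(P)=d^2+\tfrac12\bigl(d+\xi(P)\bigr)$ and $\xi(P)\equiv d\pmod 2$, so $P$ has at most $d^2+d-1$ edges precisely when $\xi(P)\leq d-2$. The excess degree theorem \cite{PinUgoYosEXC} says the excess degree of a $d$-polytope never takes a value in $\{1,\dots,d-3\}$, so together with the parity constraint the only admissible values with $\xi\leq d-2$ are $\xi=0$ (possible solely for even $d$) and $\xi=d-2$. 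It therefore suffices to classify the $d$-polytopes with $2d+1$ vertices in these two cases.

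Suppose first $\xi(P)=0$, i.e. $P$ is simple. Then every vertex lies in exactly $d$ facets, and Barnette's inequality (\cref{prop:simple}, $k=0$) gives $2d+1=f_0(P)\geq (d-1)f_{d-1}(P)-(d+1)(d-2)$, whence $f_{d-1}(P)\leq d+2$ and hence $f_{d-1}(P)=d+2$, since only the simplex has $d+1$ facets. By the classification of $d$-polytopes with $d+2$ facets (\cref{lem:dplus2facets}), $P=\Delta^{t}(r,s)$; the outermost apex of a non-trivial pyramid lies in $d+1$ facets and so is nonsimple, forcing $t=0$ and $P=\Delta(r,s)$ with $r+s=d$. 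Then $(r+1)(s+1)=2d+1$ reduces to $(r-1)(s-1)=1$, so $r=s=2$, $d=4$ and $P=\Delta(2,2)$, which has $18$ edges. In particular no simple example exists when $d\neq 4$.

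Now suppose $\xi(P)=d-2$; this case carries the real content, the goal being to show $P$ is a $d$-pentasm for every $d\geq 3$, together with $\Sigma(3)$ when $d=3$. For $d=3$ the excess is $1$: $P$ is a $3$-polytope with seven vertices, eleven edges and a single degree-$4$ vertex, and a direct enumeration yields exactly $Pm(3)$ and $\Sigma(3)$. For $d\geq 4$ the route is: (a) choose a nonsimple vertex $v$ of minimal degree and show $\deg v=d+1$, by observing that a vertex of larger degree forces $P/v$ to be a $(d-1)$-polytope whose edge count, bounded below by \cref{prop:at-most-2d} in dimension $d-1$, creates more $2$-faces through $v$ than $\xi(P)=d-2$ can accommodate; (b) deduce that $P/v$ is a $(d-1)$-polytope with $d+1$ vertices, a family with only finitely many combinatorial types, and use this together with the edge count to recognise $P$ as the polytope obtained by truncating a simple vertex of a $d$-polytope $Q$ with $d+2$ vertices, so that $f_1(Q)=f_1(P)-\binom{d}{2}=\phi_1(d+2,d)$ by \cref{lem:examples-all-neighbours-simple}(i); (c) invoke \cref{prop:at-most-2d} to identify $Q$ as the triplex $M(2,d-2)$, and note via \cref{rem:pentasm-facets} that truncating a simple vertex of $M(2,d-2)$ yields exactly the pentasm. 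The crux is the $d\geq 4$ analysis as a whole: eliminating every alternative way of distributing the $d-2$ units of excess (so that the degree sequence is forced to be $(d+1)^{d-2}\,d^{d+3}$), verifying that $P$ really does un-truncate to a $d$-polytope with $d+2$ vertices, and pinning down the combinatorial type. The most delicate configurations are those in which several nonsimple vertices are mutually adjacent, as happens in the pentasm itself.
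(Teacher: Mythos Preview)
The paper does not prove this proposition; it is quoted from \cite[Theorem~14]{PinUgoYosLBT} and used as input for the rest of the argument. So there is no ``paper's own proof'' to compare against, and your proposal has to stand on its own.

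The excess-degree framework is sound, and your treatment of the case $\xi(P)=0$ is correct: Barnette's inequality forces $f_{d-1}=d+2$, the pyramidal apex argument forces $t=0$, and the arithmetic pins down $\Delta(2,2)$ in dimension~$4$.

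The case $\xi(P)=d-2$, however, has real gaps. In step~(a), the sentence ``creates more $2$-faces through $v$ than $\xi(P)=d-2$ can accommodate'' does not make sense as written: $\xi$ counts excess \emph{edges}, not $2$-faces, and nothing you have said rules out, for instance, a single nonsimple vertex of degree $2d-2$ carrying all the excess. You need an actual argument here, and it is not short. In step~(b) the logic is back to front. Truncating a simple vertex of $Q$ produces a new \emph{simplex facet} of $P$ whose vertices are all \emph{simple}; the vertex $v$ you selected is nonsimple, so the structure of $P/v$ tells you nothing about whether $P$ arises as such a truncation. To ``un-truncate'' you would need to locate a simplex facet of $P$ with all vertices simple, and then show that coning it off yields a genuine polytope; neither step is addressed, and neither is automatic. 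The phrase ``use this together with the edge count to recognise $P$ as the polytope obtained by truncating'' is doing essentially all of the work of the theorem while saying nothing about how to do it. The actual proof in \cite{PinUgoYosLBT} proceeds quite differently, via a careful analysis of which facets can occur and how the nonsimple vertices sit relative to them, much in the spirit of \cref{lem:nonpyramidal} and \cref{thm:main} in the present paper.
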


McMullen \cite[Sec.~3]{McM71}  provided expressions for  the number of $k$-faces of a $d$-polytope with $d+2$ facets.

\begin{lemma}[{\cite[Sec.~3]{McM71}}]\label{lem:dplus2facets}
Let  $P$ be a $d$-dimensional polytope with $d+2$ facets, where $d\ge 2$. Then, there exist $r>0$, $s>0$ and $t\ge 0$ such that $d=r+s+t$, and $P$ is a $t$-fold pyramid over {$\Delta({r,s})$}. In particular, every vertex of $P$ is either pyramidal or simple. The number of  $k$-dimensional faces of $P$ is
\begin{equation}\label{eq:dplus2facets}
f_k(P)= \binom{r+s+t+2}{k+2} -\binom{s+t+1}{k+2}-\binom{r+t+1}{k+2} +\binom{t+1}{k+2}.
\end{equation}
\end{lemma}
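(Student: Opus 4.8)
The plan is to work in the polar dual: $P$ has $d+2$ facets exactly when $Q:=P^{*}$ is a $d$-polytope with $d+2$ vertices, and polytopes with $d+2$ vertices are completely described by their (one-dimensional) Gale diagrams. So the first step is to recall that a Gale diagram of $Q$ consists of $d+2$ points in $\R^{(d+2)-d-1}=\R^{1}$, which after the standard normalisation may be assumed to lie in $\{-1,0,1\}$; admissibility of the diagram (equivalently, $\dim Q=d$ with every point a genuine vertex) forces at least two points to equal $+1$ and at least two to equal $-1$. Write $t$, $r+1$, $s+1$ for the numbers of points equal to $0$, $+1$, $-1$; then $r\ge1$, $s\ge1$, $t\ge0$ and $r+s+t=d$.

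The second step is to read off the combinatorics. A Gale point at the origin marks a pyramidal apex, so $Q$ is a $t$-fold pyramid over the $(d-t)$-polytope $Q'$ whose diagram consists of $r+1$ copies of $+1$ and $s+1$ copies of $-1$. Now apply the Gale criterion for faces: a proper face of $Q'$ corresponds to a vertex set whose complementary points positively span the origin, which on the line simply means the complement contains both a positive and a negative point. This identifies $Q'$ as the direct sum $\Delta_{r}\oplus\Delta_{s}$ of two simplices, that is, as $(\Delta_{r}\times\Delta_{s})^{*}$. Dualising back---using that the dual of a $t$-fold pyramid over $S$ is a $t$-fold pyramid over $S^{*}$---gives $P=Q^{*}=\Delta^{t}(\Delta_{r}\times\Delta_{s})$; and since $\Delta_{r}\times\Delta_{s}$ is affinely equivalent to the Minkowski sum $\Delta(r,s)$ of simplices in complementary subspaces, this is the asserted structure, with $d=r+s+t$. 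The remark that every vertex of $P$ is pyramidal or simple then comes for free: the $t$ apexes are pyramidal, while each vertex of $\Delta(r,s)$ is simple there ($\Delta(r,s)$ being simple) and a non-apex vertex lying in $c$ facets of a polytope lies in $c+1$ facets of the pyramid over it, so simplicity is inherited at every pyramid step.

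The third step is the face count. Writing $P=\Delta(r,s)*\Delta_{t-1}$ (a $t$-fold pyramid is the join with a $(t-1)$-simplex, with $\Delta_{-1}=\emptyset$ when $t=0$) and $\Delta(r,s)=\Delta_{r}\times\Delta_{s}$, I would compute with the $f$-polynomial $\hat g(R;x):=\sum_{k\ge-1}f_{k}(R)\,x^{k+1}$, with the conventions $f_{-1}(R)=1$ and $f_{\dim R}(R)=1$. Three standard facts suffice: $\hat g$ is multiplicative over joins; the series $x^{-1}(\hat g(R;x)-1)=\sum_{k\ge0}f_{k}(R)x^{k}$ is multiplicative over products; and $\hat g(\Delta_{m};x)=(1+x)^{m+1}$. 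Combining these and putting $y=1+x$ collapses $\hat g(P;x)$ to
\[
\frac{y^{\,t+1}+y^{\,r+s+t+2}-y^{\,r+t+1}-y^{\,s+t+1}}{y-1},
\]
where the four divisions by $y-1$ leave no remainder. Extracting the coefficient of $x^{k+1}$ via $[x^{k+1}](1+x)^{j}=\binom{j}{k+1}$ and the identity $\sum_{j=0}^{N}\binom{j}{k+1}=\binom{N+1}{k+2}$ then yields \eqref{eq:dplus2facets} after collecting the four binomial terms.

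The main obstacle is the second step: translating the one-dimensional Gale diagram faithfully into the ``pyramid over $\Delta(r,s)$'' picture. One must be scrupulous about which of $P$, $P^{*}$ carries the Gale transform, about the normalisation of the diagram, and about invoking Gale's realisability theorem so that every admissible diagram of this shape genuinely occurs as some polytope. By comparison the face count is purely mechanical; its only delicate point is the bookkeeping of the improper faces $\emptyset$ and $P$ inside the generating-function identities.
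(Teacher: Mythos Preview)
The paper does not supply its own proof of this lemma; it is simply quoted from McMullen \cite[Sec.~3]{McM71}. So there is no argument in the paper to compare against.

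Your proposed proof is correct and is essentially the standard route. The first two steps---passing to the dual $Q=P^{*}$ with $d+2$ vertices, reading off from its one-dimensional standard Gale diagram the decomposition $Q\cong(\Delta_r\oplus\Delta_s)*\Delta_{t-1}$, and dualising back to $P\cong\Delta^{t}(r,s)$---are exactly how this classification is usually obtained (see e.g.\ \cite[Sec.~6.1]{Gru03}); your identification of $\Delta_r\times\Delta_s$ with the Minkowski sum $\Delta(r,s)$ in complementary subspaces is fine, and the ``pyramidal or simple'' remark is immediate as you say. The face-count step is also sound: with $y=1+x$ your $f$-polynomial identity
\[
\hat g(P;x)=\frac{y^{\,t+1}+y^{\,r+s+t+2}-y^{\,r+t+1}-y^{\,s+t+1}}{y-1}
\]
follows from the multiplicativity of $\hat g$ under joins and of $x^{-1}(\hat g-1)$ under products, and extracting the coefficient of $x^{k+1}$ via $\sum_{j=0}^{N-1}\binom{j}{k+1}=\binom{N}{k+2}$ gives precisely \eqref{eq:dplus2facets}. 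The only caveat is presentational: in a fully written proof you should spell out that the numerator is divisible by $y-1$ (the signed exponents sum to zero), so that no spurious $1/x$ term appears before you read off coefficients.
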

	
Conversely, any $d$-polytope of the form just described has precisely $d+2$ facets.	
We list them now.

\begin{remark}\label{rem:dplus2facets-facets}
The $r+s+t+2$ facets of a $t$-fold pyramid over $\Delta(r,s)$ are as follows:
\begin{enumerate}
\item $r+1$ facets that are  $t$-fold pyramids over $\Delta(r-1,s)$,

\item $s+1$ facets that are $t$-fold pyramids over $\Delta(r,s-1)$,

\item and $t$ facets that are $(t-1)$-fold pyramids over $\Delta(r,s)$.
\end{enumerate}
\end{remark}

McMullen actually characterised the values of $f_0$ for which there is a $d$-polytope with $d+2$ facets and $f_0$ vertices; we  state a special case.

\begin{corollary}[{\cite[special case of Thm.~2]{McM71}}]\label{cor:dprime}
If $d$ is prime then there is no  $d$-polytope with $d+2$ facets and $2d+1$ vertices.
\end{corollary}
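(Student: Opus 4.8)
The plan is to derive \cref{cor:dprime} directly from \cref{lem:dplus2facets} by a short number-theoretic argument. Suppose, for contradiction, that $P$ is a $d$-polytope with $d+2$ facets and $2d+1$ vertices. By \cref{lem:dplus2facets}, there exist integers $r>0$, $s>0$, $t\ge 0$ with $d=r+s+t$ such that $P$ is a $t$-fold pyramid over $\Delta(r,s)$. The first step is to compute $f_0(P)$ using \eqref{eq:dplus2facets} with $k=0$:
\[
f_0(P)=\binom{r+s+t+2}{2}-\binom{s+t+1}{2}-\binom{r+t+1}{2}+\binom{t+1}{2}.
\]
Expanding each binomial coefficient as $\binom{m}{2}=\tfrac{m(m-1)}{2}$ and simplifying, the quadratic and linear cross-terms collapse, and one is left with $f_0(P)=rs+t+s+r+t+1$, i.e. $f_0(P)=rs+(r+s+t)+t+1 = rs+d+t+1$. (I would present this as a one-line identity rather than grinding through it.)

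Setting $f_0(P)=2d+1$ then gives $rs+d+t+1=2d+1$, hence $rs=d-t=r+s$. So the existence of such a polytope forces a solution in positive integers to $rs=r+s$, namely $r=s=2$, which in turn forces $d=r+s+t=4+t$; thus $d$ is composite (indeed $d\ge 4$ and $d-4=t\ge 0$ means $d$ has $2$ as a factor via $d=rs+t=4+t$, but more to the point $rs=4$ divides nothing directly — the real conclusion is just that $r=s=2$ is the only option). Wait: from $rs=r+s$ with $r,s\ge 1$ integers, rewrite as $(r-1)(s-1)=1$, so $r-1=s-1=1$, giving $r=s=2$ and $rs=4$. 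Then $d=r+s+t=4+t\ge 4$, and since $t=d-4$, we always have a pyramid realization when $d\ge 4$; but for $d$ prime this is impossible unless... hmm, $d=4+t$ does not itself say $d$ is composite. Let me reconsider: the cleaner route, which I would actually adopt, keeps $r,s$ general and does not prematurely solve $rs=r+s$.

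Here is the correct plan. From \eqref{eq:dplus2facets} with $k=0$ one obtains, after simplification, $f_0(P)=rs+r+s+2t+1-t = \dots$; I will instead recall the known closed form $f_0(\Delta^t(r,s)) = (r+1)(s+1)+t$, which follows since $\Delta(r,s)$ has $(r+1)(s+1)$ vertices (a product-like count for the Minkowski sum of simplices in complementary subspaces) and each pyramid operation adds one apex. Then $f_0(P)=(r+1)(s+1)+t=2d+1=2(r+s+t)+1$, so $(r+1)(s+1)=2r+2s+t+1$, i.e. $rs+r+s+1=2r+2s+t+1$, giving $rs=r+s+t=d$. Therefore $d=rs$ with $r,s\ge 1$; and since $d\ge 2$ forces $r,s\ge 1$, if additionally $r=1$ then $d=s$ and $t=d-1-s=-1<0$, impossible, so $r\ge 2$ and similarly $s\ge 2$, whence $d=rs$ is composite. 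Contrapositively, if $d$ is prime there is no such polytope. The main obstacle — really the only subtlety — is getting the vertex count $f_0(\Delta^t(r,s))$ right; I would either cite it from \cite{McM71} or verify it quickly from \eqref{eq:dplus2facets}, and then the rest is the elementary observation that $rs=d$ with $r,s\ge 2$ makes $d$ composite.
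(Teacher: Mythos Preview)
The paper does not actually give its own proof of \cref{cor:dprime}; it is simply stated as a special case of McMullen's theorem. Your eventual argument is correct and is exactly the computation implicit elsewhere in the paper (compare the line ``$f_{0}(F)=(m+1)(n+1)+t=2d-1$, which implies that $mn=d-1$'' in the proof of \cref{lem:nonpyramidal}, and the phrase ``$d=rs$ is a nontrivial factorisation'' in \cref{thm:2dplus1-bound}): from $f_0(\Delta^t(r,s))=(r+1)(s+1)+t=2d+1$ and $d=r+s+t$ one gets $rs=d$, and ruling out $r=1$ or $s=1$ (since that forces $t=-1$) shows $d$ is composite.

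Your first pass, however, contains a genuine arithmetic slip: you wrote $f_0(P)=rs+d+t+1$, which is $rs+r+s+2t+1$ and has an extra $t$; the correct value is $rs+r+s+t+1=(r+1)(s+1)+t$. That error propagated into the spurious equation $rs=r+s$ and the dead end $r=s=2$, $d=4+t$. You caught this yourself, but in a written proof you should present only the corrected version: state $f_0(\Delta^t(r,s))=(r+1)(s+1)+t$ (justified either by citing \cite{McM71} or by the one-line observation that $\Delta(r,s)$ has $(r+1)(s+1)$ vertices and each pyramid adds one apex), set it equal to $2d+1$, deduce $rs=d$, and conclude.
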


\subsection{Capped prisms}
\label{subsection:capped}
We define another family of polytopes related to the pentasms. For $3\le \ell\le d$, let $Q_{\ell}$ be a bipyramid over an $(\ell-1)$-simplex, and let $P_{\ell,d}$ be a $(d-\ell)$-fold pyramid over $Q_{\ell}$. Then $P_{\ell,d}$ is a $d$-polytope with $d+2$ vertices, only two of which are simple, say $v_{0}$ and $v_{1}$. Truncating one simple vertex from $P_{\ell,d}$, say $v_{1}$, gives us a {\it capped $d$-prism}, which we denote  by $CP(\ell,d)$. This definition is consistent with the one given in \cite[Remark 2.11]{PinUgoYosEXC}. All of the $d-2$ polytopes constructed in this manner have the same graph. In particular they have $2d+1$ vertices and $d^2+d$ edges, one more than the pentasm. Nevertheless their combinatorial type depends on the value of $\ell$;  $CP(\ell,d)$ has $d+\ell+1$ facets, so their $f$-vectors  are all distinct. (For $\ell=2$, the same construction leads to the $d$-pentasm, with $d^2+d-1$ edges.)

The aforementioned vertex $v_0$ remains a vertex of $CP(\ell,d)$. It is also the case that the convex hull of the other $2d$ vertices in $CP(\ell,d)$ is a simplicial $d$-prism, and $v_0$ is beyond one simplex facet of this prism, and not beyond any of the other facets; hence the name capped prism. A point is said to be {\it beyond} a facet \cite[Sec. 5.2]{Gru03} of a polytope if the supporting hyperplane for that facet strictly separates the point from the polytope.

\begin{lemma}\label{lem:capped}
If $3\le \ell\le d$, then, for each $1\le k\le d-1$, we have that \[f_k(CP(\ell,d))>f_k(Pm(d)).\]
\end{lemma}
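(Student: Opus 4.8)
The plan is to reduce the inequality to a comparison between two $d$-polytopes with $d+2$ vertices, and then apply Xue's characterisation (\cref{prop:at-most-2d}).

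First I would pass to the polytopes from which $CP(\ell,d)$ and $Pm(d)$ are cut. By definition $CP(\ell,d)$ is obtained by truncating the simple vertex $v_1$ of $P_{\ell,d}$, and $Pm(d)$ is obtained by truncating a simple vertex of the triplex $M(2,d-2)$. By \cref{lem:examples-all-neighbours-simple}(i), truncating a simple vertex of a $d$-polytope increases $f_k$ by exactly $\binom{d}{k+1}$ for every $k\ge 1$ (for $k=d-1$ this counts the single new facet). Hence for $1\le k\le d-1$,
\[
f_k(CP(\ell,d))-f_k(Pm(d))=f_k(P_{\ell,d})-f_k(M(2,d-2)),
\]
so it suffices to prove $f_k(P_{\ell,d})>f_k(M(2,d-2))$ for $1\le k\le d-1$.

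Next, I would note that both $P_{\ell,d}$ and $M(2,d-2)$ are $d$-polytopes with $d+2$ vertices: $M(2,d-2)$ is a $(d-2)$-fold pyramid over a square, while $P_{\ell,d}$ is a $(d-\ell)$-fold pyramid over a bipyramid over an $(\ell-1)$-simplex and so has $(\ell+2)+(d-\ell)=d+2$ vertices. Recall that triplices are the minimisers, so $f_k(M(2,d-2))=\phi_k(d+2,d)$ for every $k$. But $P_{\ell,d}$ is not the $(2,d-2)$-triplex: a bipyramid over an $(\ell-1)$-simplex has $2\ell$ facets, so $P_{\ell,d}$ has $2\ell+(d-\ell)=d+\ell\ge d+3$ facets, whereas $M(2,d-2)$ has only $d+2$. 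According to \cref{prop:at-most-2d}, a $d$-polytope with $d+2$ vertices that attains $\phi_k(d+2,d)$ for even a single value of $k$ in $[1,d-2]$ must be the $(2,d-2)$-triplex; hence, $P_{\ell,d}$ being distinct from that triplex, $f_k(P_{\ell,d})>\phi_k(d+2,d)=f_k(M(2,d-2))$ for all $k\in[1,d-2]$. Finally, for $k=d-1$ the claim is immediate from the facet counts just made, $f_{d-1}(P_{\ell,d})=d+\ell>d+2=f_{d-1}(M(2,d-2))$ (equivalently $f_{d-1}(CP(\ell,d))=d+\ell+1>d+3=f_{d-1}(Pm(d))$). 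Combining this with the reduction above proves the lemma.

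I do not expect a genuine obstacle here; the argument is essentially bookkeeping once one observes that $P_{\ell,d}$ has $d+2$ vertices, so that \cref{prop:at-most-2d} applies. The two points that need a moment of care are that the vertex truncated from $P_{\ell,d}$ is indeed simple (this is exactly the statement that $P_{\ell,d}$ has precisely the two simple vertices $v_0,v_1$, which licenses the use of \cref{lem:examples-all-neighbours-simple}(i)), and that $P_{\ell,d}$ is combinatorially distinct from $M(2,d-2)$ for every admissible $\ell$ --- which the facet count $d+\ell\ge d+3>d+2$ establishes uniformly --- so that \cref{prop:at-most-2d} yields strict inequality for all $k\in[1,d-2]$ simultaneously, rather than for a single value of $k$.
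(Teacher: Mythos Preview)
Your proof is correct and follows essentially the same route as the paper: reduce to comparing $P_{\ell,d}$ with $M(2,d-2)$ via \cref{lem:examples-all-neighbours-simple}(i), then invoke \cref{prop:at-most-2d}. You are in fact slightly more careful than the paper, handling $k=d-1$ separately by the facet count $d+\ell>d+2$, whereas the paper's phrase ``all nonzero dimensions'' leaves that case implicit.
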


\begin{proof} A $d$-pentasm is obtained by truncating a simplex vertex from the triplex $M(2,d-2)$ and a capped prism is obtained by truncating a simplex vertex from the aforementioned $P_{\ell,d}$.
Since $P_{\ell,d}$ has $d+2$ vertices, it has strictly more faces of all nonzero dimensions than $M(2,d-2)$, by \cref{prop:at-most-2d}. \cref{lem:examples-all-neighbours-simple}(i) informs us that this inequality persists after truncating a simple vertex,  thereby establishing this lemma.
\end{proof}

\subsection{The result of Xue \cite{Xue21}} Let $f_k(F,P)$ be the number of  $k$-faces in $P$ containing the face $F$. The result \cite[Prop.~3.1]{Xue21}, originally designed for $d$-polytopes with up to $2d$ vertices, is key to our work. Its proof in \cite{Xue21} actually establishes something slightly more general. Let $\dim P$ denote the dimension of a polytope $P$.

\begin{proposition}[{\cite[Prop.~3.1]{Xue21}}]\label{prop:number-faces}
Let $d\ge 2$ and let $P$ be a $d$-polytope. In addition, suppose that  $r\le d+1$ is given and  that $S:=(v_1,v_2,\ldots,v_r)$ is a sequence of distinct vertices in $P$. Then the following hold.
\begin{enumerate}
    \item  There is a sequence $F_1, F_2,\ldots, F_r$ of faces of $P$ such that each $F_i$ has dimension $d-i+1$ and contains $v_i$, but does not contain any $v_j$ with $j<i$.
    \item Suppose $\deg_{F_{i}} (v_i)\le 2(\dim F_i-1)$ for each $i\in [1,\min\{r,d-1\}]$. Then for each $k\ge2$, the number of $k$-faces of $P$ that contain at least one of the vertices in $S$ is bounded from below by \[\sum_{i=1}^{r}\phi_{k-1}(\deg_{F_i} (v_i),\dim  F_i-1).\]
\end{enumerate}
\end{proposition}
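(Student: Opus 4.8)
The plan is to establish the two parts in turn; part (i) is a short inductive construction and part (ii) is a charging argument that reduces the count to \cref{prop:at-most-2d} applied to vertex figures.

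For part (i), I would first record the elementary fact that in a polytope $G$ with $\dim G\ge1$ every face is the intersection of the facets containing it, so that if $v$ and $w$ are distinct vertices of $G$ then some facet of $G$ contains $v$ but not $w$ (otherwise that intersection, which is $\{v\}$, would contain $w$). Then, to build $F_i$, start with $G:=P$ and run through $v_1,\dots,v_{i-1}$ in order, at step $m$ replacing $G$ by a facet of $G$ that contains $v_i$ but not $v_m$: such a facet exists by the fact just stated when $v_m\in G$, and is any facet of $G$ through $v_i$ otherwise (this is legitimate since $\dim G=d-m+1\ge1$ throughout, as $m\le i-1\le r-1\le d$). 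Exclusion passes to subfaces, so after the $i-1$ steps the face $F_i$ avoids $v_1,\dots,v_{i-1}$, contains $v_i$, and has dimension exactly $d-(i-1)=d-i+1$ because each step drops the dimension by one (when $i=d+1$ this simply forces $F_{d+1}=\{v_{d+1}\}$).

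For part (ii), the idea is to charge each $k$-face $G$ of $P$ that meets $S$ to the least index $i$ with $v_i\in G$. The faces charged to $i$ are precisely the $k$-faces of $P$ through $v_i$ that avoid $v_1,\dots,v_{i-1}$; for different $i$ these families are disjoint, so the number of $k$-faces of $P$ meeting $S$ is at least $\sum_{i=1}^{r} g_i$, where $g_i$ is the size of the $i$-th family. Since $F_i$ contains none of $v_1,\dots,v_{i-1}$, every $k$-face of $F_i$ through $v_i$ lies in the $i$-th family, so $g_i$ is at least the number of $k$-faces of $F_i$ containing $v_i$, which by the vertex-figure bijection equals $f_{k-1}(Q_i)$ for $Q_i:=F_i/v_i$, a polytope of dimension $\dim F_i-1$ with $f_0(Q_i)=\deg_{F_i}(v_i)$. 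Thus it suffices to show $f_{k-1}(Q_i)\ge\phi_{k-1}(f_0(Q_i),\dim Q_i)$.

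This last inequality is where the hypothesis enters: for $i\le\min\{r,d-1\}$ the assumption $\deg_{F_i}(v_i)\le 2(\dim F_i-1)$ says exactly that $Q_i$ has at most $2\dim Q_i$ vertices, so when moreover $\dim Q_i\ge2$ and $k-1\in[1,\dim Q_i-1]$ the bound is \cref{prop:at-most-2d} verbatim; in the leftover degenerate ranges (small $\dim F_i$, or $k$ too large for $F_i$ to have a $k$-face, or the at most two indices $i\in\{d,d+1\}$ with $\dim F_i\le1$) both sides are $0$ or $1$ and the inequality is immediate. Summing over $i$ finishes part (ii). I expect the only genuine care needed to be the bookkeeping — checking that the facet selection in part (i) always stays in the allowed dimension range, and that the degree hypothesis matches the vertex-count hypothesis of \cref{prop:at-most-2d} for $Q_i$ — rather than the charging argument itself, which is routine.
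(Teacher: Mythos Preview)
Your proposal is correct and follows essentially the same approach as the paper's sketch: the facet-peeling construction you give for part (i) is precisely the argument the paper defers to \cite{Xue21}, and your charging argument in part (ii)---assigning each $k$-face meeting $S$ to the least $i$ with $v_i$ in it, then observing that the $k$-faces of $F_i$ through $v_i$ land in the $i$-th class because $F_i$ avoids $v_1,\dots,v_{i-1}$---is the disjointness that the paper uses implicitly when it writes $\sum f_k(v_i,F_i)$ as a lower bound. Your handling of the degenerate indices $i\in\{d,d+1\}$ and of the low-dimensional $Q_i$ matches the paper's treatment; there is nothing to add.
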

\begin{proof}[Proof sketch] The proof of (i) is as in the proof of \cite[Prop.~3.1]{Xue21}. The extension to the case $r=d+1$ is easy; take $F_{d+1}=\{v_{d+1}\}$. For the veracity of (ii) observe that the number $f_k(v_i,F_i)$ of $k$-faces in $F_i$ containing $v_i$ equates to the number of $(k-1)$-faces in the vertex figure $F_i/v_i$ of $F_i$ at  $v_i$. Now \[f_0(F_i/v_i)=\deg_{F_{i}} (v_i)\le 2(\dim F_i-1),\] and so the lower bound $\phi_{k-1}(\deg_{F_i} (v_i),\dim  F_i-1)$ of  \cref{prop:at-most-2d} applies to $f_{k-1}(F_i/v_i)$.
In case that $i=d$ or $d+1$, then $\dim  F_i\le1$, and for each $k\ge 2$ we have that \[f_{k}(v_i,F_i)=0=\phi_{k-1}(\deg_{F_i} (v_i),\dim  F_i-1).\]   Consequently, for each $k\ge 2$ we get that
\[\sum_{i=1}^{r} f_{k}(v_i,F_i)= \sum_{i=1}^{r} f_{k-1}(F_i/v_i)\ge \sum_{i=1}^{r}\phi_{k-1}(\deg_{F_i} (v_i),\dim  F_i-1).\]
The sketch of the proof is now complete.
\end{proof}

Several instances of \cref{prop:number-faces} are of particular interest here, and so we gather them in the next corollary.

\begin{corollary}\label{cor:number-faces}
Let $P$ be a $d$-polytope with $2d+1$ vertices,  let $r\le d$,  and let $S$ be a sequence of $r$ vertices in $P$. Then  the number of $k$-faces of $P$ that contain at least one of the vertices in $S$ is at least
\begin{enumerate}
\item $\displaystyle\phi_{k-1}(\deg_{P}(v),d-1)+\sum_{i=2}^{r} \binom{d-i+1}{k},\; \text{for any chosen vertex $v\in S$}.$

\item  This expression is bounded from below by
$$
\begin{cases}
\displaystyle\binom{d}{k}+\binom{d-1}{k}-\binom{d-2}{k}+\sum_{i=2}^{r} \binom{d-i+1}{k}, & \text{if $v$ is nonsimple,}\\
\displaystyle\sum_{i=1}^{r} \binom{d-i+1}{k}, & \text{in any case.}
\end{cases}
$$\end{enumerate}

\end{corollary}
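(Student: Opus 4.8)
The plan is to derive \cref{cor:number-faces} almost immediately from \cref{prop:number-faces}, treating it as a specialisation to the regime $d=r+s+1$... actually to the case at hand where $P$ has $2d+1$ vertices, $r\le d$, and we only track faces through the chosen sequence $S$. First I would invoke \cref{prop:number-faces}(i) to obtain the nested faces $F_1,\ldots,F_r$ with $\dim F_i=d-i+1$; the key point is that since $\dim F_i - 1 = d-i$ and, for $i\ge2$, the vertex $v_i$ in $F_i$ has at most $\dim F_i - 1$ neighbours... no, I should be more careful: the degree hypothesis $\deg_{F_i}(v_i)\le 2(\dim F_i-1)$ of part (ii) must be checked. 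For $i=1$ this says $\deg_P(v_1)\le 2(d-1)$, which holds because $P$ has only $2d+1$ vertices so every vertex has degree at most $2d$... that is not quite $2(d-1)$. The honest route is to choose the distinguished vertex $v$ to be placed first in the sequence $S$ and handle $F_1$ separately: $f_k(v,F_1)$ equals $f_{k-1}(P/v)$, and since $P/v$ is a $(d-1)$-polytope with $\deg_P(v)\le 2d\le 2((d-1)+1)$... here I would appeal directly to \cref{prop:at-most-2d} applied to $P/v$, whose vertex count is $\deg_P(v)\le 2(d-1)+2$, giving $f_{k-1}(P/v)\ge\phi_{k-1}(\deg_P(v),d-1)$, which is exactly the first summand claimed in (i). For $i\ge 2$, each $F_i$ is a $(d-i+1)$-polytope and $v_i$ lies in it; the crudest valid bound is $f_k(v_i,F_i)\ge f_k(\Delta(d-i+1))\cdot[\text{faces through a vertex of a simplex}] = \binom{d-i+1}{k}$, since any vertex of any $(d-i+1)$-polytope lies in at least as many $k$-faces as a vertex of the simplex of that dimension; I would cite this as the trivial lower bound $\phi_{k-1}(\deg_{F_i}(v_i),\dim F_i -1)\ge\binom{\dim F_i}{k}$ (the $s=1$ case of \eqref{eq:at-most-2d}, equivalently $\phi_{k-1}(m,e)\ge\binom{e+1}{k}$ for all $m\ge e+1$). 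Summing the disjointness-of-contributions argument exactly as in \cref{prop:number-faces}(ii) then yields part (i) of the corollary.

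For part (ii) the work is purely arithmetic. In the first case, $v$ is nonsimple, so $\deg_P(v)\ge d+1$; I would show $\phi_{k-1}(\deg_P(v),d-1)$ is nondecreasing in its first argument (clear from \eqref{eq:at-most-2d}, since increasing $s$ only decreases the subtracted term $\binom{d-s}{k}$), hence $\phi_{k-1}(\deg_P(v),d-1)\ge\phi_{k-1}(d+1,d-1)$. Writing out \eqref{eq:at-most-2d} with $d\mapsto d-1$, $s=2$, $k\mapsto k-1$ gives $\phi_{k-1}(d+1,d-1)=\binom{d}{k}+\binom{d-1}{k}-\binom{d-2}{k}$, which is the stated bound; adding the $\sum_{i=2}^r\binom{d-i+1}{k}$ from part (i) finishes this case. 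In the general case, one simply uses $\phi_{k-1}(\deg_P(v),d-1)\ge\binom{d}{k}$ (again the $s=1$ trivial bound, valid for every vertex whether simple or not), so the first summand of (i) is at least $\binom{d}{k}=\binom{(d-1)+1}{k}$, which is the $i=1$ term of $\sum_{i=1}^r\binom{d-i+1}{k}$, and the remaining terms match those already present.

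There is essentially no obstacle here: the corollary is a bookkeeping consequence of \cref{prop:number-faces} together with two completely elementary monotonicity/positivity facts about $\phi$. The only point requiring the slightest care is confirming that the degree hypothesis of \cref{prop:number-faces}(ii) is met, or — cleaner — sidestepping it by treating the first face $F_1$ via \cref{prop:at-most-2d} applied directly to the vertex figure $P/v$ (which is legitimate since $\deg_P(v)\le 2d=2(d-1)+2$), and bounding all later contributions by the trivial simplex bound $\binom{\dim F_i}{k}$ which needs no hypothesis at all. I would present the argument in roughly half a page: one short paragraph deriving (i) from \cref{prop:number-faces}, and one short paragraph doing the two-line estimates for (ii).
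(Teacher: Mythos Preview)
Your approach is essentially the paper's: the corollary is stated there without proof, as an immediate consequence of \cref{prop:number-faces}, and your plan---place the distinguished vertex $v$ first, bound $f_k(v,P)$ via the vertex figure, and bound all later contributions by the trivial simplex estimate $\binom{d-i+1}{k}$---is exactly the intended reading. The arithmetic for part~(ii) is correct.

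There is one small gap worth flagging. You correctly notice that the hypothesis $\deg_{F_1}(v_1)\le 2(\dim F_1-1)=2(d-1)$ of \cref{prop:number-faces}(ii) may fail, since a vertex of a $(2d+1)$-vertex polytope can have degree up to $2d$. But your proposed workaround---applying \cref{prop:at-most-2d} directly to $P/v$---has the same defect: that proposition covers $(d-1)$-polytopes with at most $2(d-1)$ vertices, not $2(d-1)+2$, so the parenthetical ``which is legitimate since $\deg_P(v)\le 2d=2(d-1)+2$'' is not justified. In the paper's actual applications this never bites: every use of \cref{cor:number-faces}(i) is to a vertex that turns out to have degree at most $d+1$, and the remaining uses invoke only part~(ii). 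If you want part~(i) to hold verbatim for all $v$, the cases $\deg_P(v)\in\{2d-1,2d\}$ would need a separate (easy, but not free) argument; for the paper's purposes the point is moot.
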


\section{Polytopes with few edges}

The case $k=d-2$ of our main result, i.e. the case of ridges, was announced without proof in \cite[Prop. 30]{PinUgoYosLBT}. It is more natural to prove first the dual result, which characterises certain polytopes with a close to minimal number of edges. The combinatorial structure of all $d$-polytopes with $d+2$ vertices  was characterised in \cite[Chap. 10]{Gru03}. Such polytopes have at least $\phi(d+2,d)={\binom{d}{2}}-2$ edges, and the only one with exactly $\phi(d+2,d)$ edges is the triplex $M(2,d-2)$.

We will establish a similar result for $d$-polytopes with $d+3$ vertices. We know that the only one with exactly $\phi(d+3,d)$ edges is the triplex $M(3,d-3)$.  Furthermore, the only ones with exactly $\phi(d+3,d)+1$ edges are the $(d-2)-$fold pyramid over the pentagon, and the $(d-3)-$fold pyramid over the tetragonal antiwedge; this is easily deduced from  Gale diagram techniques like those about to be presented  (see also \cite[Theorem 3.3]{PinUgoYosCSimp}). (The \textit{tetragonal antiwedge} is the unique nonpyramidal 3-polytope with six vertices and six faces; four of the faces are triangles and two are quadrangles.) We shall completely characterise all such polytopes with $\phi(d+3,d)+2$ edges, at least in terms of their Gale diagrams.

The structure of polytopes with $\phi(d+3,d)+3$ or more edges is of course more complicated; we refer to  \cite{Fus06} for more details. Complete catalogues of $d$-polytopes with $d+3$ vertices have been constructed only for $d\le6$; see \cite{FukMiyMor13}.

A vertex $v$ is \textit{pyramidal} in a polytope $P$ if $P$ is a pyramid with apex $v$; otherwise the vertex is \textit{nonpyramidal}. Alternatively, a vertex $v$ is pyramidal in $P$ if $P$ is the convex hull of $v$ and a $(d-1)$-polytope that does not contain $v$, the \textit {base} of the pyramid.

We assume familiarity with Gale diagrams as in \cite[Sec. 6.3]{Gru03},\cite{McMShe71} or \cite[Sec. 6.4]{Zie95}. For a $d$-polytope with $d+s$ vertices, a standard Gale transform is a mapping from the vertex set into $\mathbb{R}^{s-1}$, which sends non-pyramidal vertices to the unit sphere, while pyramidal vertices are transformed to the origin. Transforms of different vertices may be located at the same point. The Gale diagram is this collection of transformed points. When $s\le d$, the Gale transform has lower dimension than the original polytope, and so is easier to visualise. This is particularly so when $s=2$ or 3. Most importantly, the Gale diagram contains complete information about the face lattice of the original polytope. This compact representation makes the Gale diagram a powerful tool for studying the combinatorial types of polytopes with few vertices. For convenience, we will continue to denote vertices of a polytope by lower case letters, but we will now denote their transformed points in the Gale diagram by the corresponding upper case letters.

Let us explain briefly how to extract information about the face lattice from Gale diagrams. We start with the fundamental property that any open hemisphere must contain the transforms of at least two vertices. A {\it coface} of a polytope is a collection of points in its Gale diagram whose convex hull contains the origin in its relative interior. It transpires that a collection of vertices in a polytope is the vertex set of a face if, and only if, the transformed points in the Gale diagram are the complement of a coface. This makes it relatively easy to read off  the vertex-facet incidence relations from the Gale diagram; one just has to identify the minimal cofaces. For example, if two vertices $u$ and $v$ are such that their transforms $U,V$ are diametrically opposite, then there is a facet which contains all vertices except $u$ and $v$. On the other hand, if a collection of three or more vertices have their Gale transforms at the same point on the sphere, then they must all be adjacent to one another, since, for every pair of them,  the convex hull of the  transforms of the remaining vertices will be the same.

We are only interested here in polytopes with $d+3$ vertices, so the Gale diagram is two-dimensional, and the transforms of non-pyramidal points are contained in the circle $S^1$.  This makes it easy to read off the missing edges from the Gale diagram; by \textit{missing edge} we mean a pair of nonadjacent vertices.  Let us introduce some temporary vocabulary to streamline our arguments in this section. Any two non-diametral points will divide the circle into two arcs, one of which contains a semicircle, and one of which does not; the latter will be called the {\it short arc} between the two points. Say that two points on a Gale diagram are \textit{contiguous} if no other transformed points lie on the short arc between them. This includes the case of two points at the same location, but not three. Diametrically opposite points are not  contiguous; both semicircles between them must contain other points. The point  diametrically opposite a transformed vertex $V$ will be denoted $-V$; it need not be the transform of any vertex. The next lemma gathers a number of results that routinely follow from the discussion in \cite[Sec. 6.3]{Gru03}.

\begin{lemma}\label{lem:galefacts}
Let $t,u,v\ldots$ be  vertices of  a  $d$-polytope $P$ with $d+3$ vertices, and let  $T,U,V\ldots$ be their  Gale transforms.
\begin{enumerate}
\item if  $V$ is contiguous with both $U$ and $W$, then $u$ and $w$ must be adjacent in $P$.
\item if $v$ and $w$ are not adjacent, then $V$ and $W$ must be contiguous. If $u$ is another vertex with $U$ contiguous to $V$ (on the other side  from $W$), then the Gale transforms of all other vertices must be contained in a closed semicircle from $W$ to $-W$.
\item if $v$ is a simple vertex, then there are precisely two vertices $u,w$ whose transforms $U,W$ are contiguous to $V$, and the transforms of all other vertices are (on the short arc) between $-U$ and $-W$.
\item  if $t,u,v,w$ are four vertices of $P$ whose transforms $T,U,V,W$ are ordered cyclically on the circle, so that cyclically consecutive pairs are contiguous, and $u,v$ is a missing edge of $P$, then the arc from $W$ to $T$ (in the given orientation) is the short one.
\end{enumerate}
\end{lemma}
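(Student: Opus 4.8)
The plan is to derive all four items from the single structural fact recalled just before the lemma: a set of vertices is a face exactly when the complementary set of Gale transforms is a coface, i.e.\ its convex hull contains the origin in its relative interior, together with the basic observation that in a two-dimensional Gale diagram every open half-plane through the origin (equivalently every open semicircle of $S^1$) contains at least two transformed points.

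For (i), I would argue by contradiction: if $u$ and $w$ are not adjacent, then $\{u,w\}$ is a missing edge, so the complement of $\{U,W\}$ is a coface, meaning the origin lies in the relative interior of the convex hull of the transforms of all vertices other than $u$ and $w$. But if $V$ is contiguous with both $U$ and $W$, then the short arc between $U$ and $W$ contains only $V$ among the transformed vertices; hence all vertices other than $u,w$ (in particular including $v$) have transforms lying on the closed complementary arc, which lies in a closed half-plane whose boundary passes through the origin only if $U,W$ are antipodal — and they are not, since contiguity with a common point rules out antipodality (both semicircles through antipodal points must contain other points). So the transforms of $V\setminus\{u,w\}$ lie strictly inside an open half-plane, contradicting that their convex hull contains the origin in its relative interior. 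For (ii), the first sentence is just the contrapositive of (i) in the form: non-adjacency of $v,w$ forces the short arc between $V$ and $W$ to be free of other transforms, i.e.\ $V,W$ contiguous. For the second sentence, with $U$ contiguous to $V$ on the far side from $W$, apply the coface condition to the missing edge $\{v,w\}$: the origin is in the relative interior of $\mathrm{conv}$ of the remaining transforms, which all lie (using both contiguities) on the closed arc from $-V$ around through $U$ to $W$ and onwards — bookkeeping of which arcs are forced empty shows this convex hull can only contain the origin if the remaining points in fact lie in the closed semicircle from $W$ to $-W$ on the appropriate side.

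For (iii), a simple vertex $v$ is one lying in exactly $d$ facets; equivalently, in the Gale diagram, exactly $d$ of the $d+3$ vertices can be removed (one facet at a time) — more usefully, a vertex $v$ is simple iff its figure has the minimum number of missing edges, which via the coface description of facets translates (this is the standard computation in \cite[Sec.~6.3]{Gru03}) into the statement that exactly two transforms $U,W$ are contiguous to $V$, and every facet not containing $v$ corresponds to an antipodal-type coface through $V$; the condition that all such cofaces exist forces every other transform to lie on the short arc between $-U$ and $-W$. I would phrase this as: any transform $X$ outside that arc, together with $V$, would be contiguous (since nothing separates them after the two neighbours are accounted for), producing an extra face through $v$ beyond the simple count, a contradiction; conversely everything inside the arc is consistent. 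For (iv), with $T,U,V,W$ cyclically ordered and consecutive pairs contiguous, apply the coface criterion to the missing edge $\{u,v\}$: the origin lies in the relative interior of $\mathrm{conv}$ of all transforms except $U,V$. Since $T$ is contiguous to $U$ on the side away from $V$, and $W$ is contiguous to $V$ on the side away from $U$, every remaining transform lies on the closed arc running from $W$ through the "outside" back to $T$; for the origin to be in the relative interior of the hull of a point set confined to a closed arc, that arc must strictly exceed a semicircle, i.e.\ the arc from $W$ to $T$ avoiding $U,V$ is the long one, so the arc from $W$ to $T$ in the given orientation is the short one.

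The main obstacle is (iii): pinning down precisely which coface conditions encode simplicity of $v$ and checking that "all transforms between $-U$ and $-W$" is exactly the right condition (neither too weak nor too strong), rather than the coarser statements in (i)–(ii); this is the place where one genuinely has to invoke the face-counting bijection with the vertex figure and be careful about boundary/antipodal cases. Items (i), (ii), and (iv) are then essentially uniform applications of "missing edge $\Rightarrow$ complement is a coface $\Rightarrow$ remaining transforms cannot be squeezed into an open half-plane," and I expect each to take only a few lines.
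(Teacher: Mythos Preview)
The paper itself does not give a proof of this lemma; it merely introduces it with the remark that ``the next lemma gathers a number of results that routinely follow from the discussion in \cite[Sec.~6.3]{Gru03}''. So there is no paper-proof to compare against beyond that.

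Your general strategy---reduce everything to the face/coface correspondence plus the fact that every open semicircle contains at least two transforms---is exactly the right one, and parts (ii)--(iv) can indeed be done in a few lines each once the machinery is set up. However, there is a genuine logical error that runs through your sketch: you have the direction of the face/coface correspondence reversed. A pair $\{u,w\}$ is an \emph{edge} precisely when the complement of $\{U,W\}$ \emph{is} a coface; hence a \emph{missing} edge corresponds to the complement \emph{not} being a coface, i.e.\ the origin is \emph{not} in the relative interior of the convex hull of the remaining transforms. In your argument for (i) you write ``if $u$ and $w$ are not adjacent \ldots\ so the complement of $\{U,W\}$ is a coface, meaning the origin lies in the relative interior \ldots'', and you repeat the same inversion in (iv). With the direction reversed, neither argument goes through as written; in (iv) you actually reach the opposite conclusion (that the arc avoiding $U,V$ is the long one) before asserting the lemma's statement anyway.

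A second, related issue: the first sentence of (ii) is \emph{not} the contrapositive of (i). Item (i) is about a third transform $V$ sitting between $U$ and $W$; item (ii) asserts that $V$ and $W$ themselves are contiguous. The correct argument for (ii) is direct: if $v,w$ are not adjacent then the complement of $\{V,W\}$ is not a coface, so all remaining transforms lie in a closed semicircle; the complementary open semicircle must still contain two transforms, so it contains exactly $V$ and $W$, and the (necessarily short) arc between them inside that open semicircle is free of other transforms. Once you fix the direction of the correspondence, the same style of argument handles (i) and (iv), and your plan for (iii) via counting facets through $v$ in the vertex figure is fine.
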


We now need the concept of  {\it free join} of two polytopes. Recall that two affine spaces are {\it skew} if they do not intersect, and no line from one space is parallel to a line from the other.
Let  $P_{1}\subset \R^{d_{1}+d_{2}+1}$ be a $d_{1}$-polytope and $P_{2}\subset\R^{d_{1}+d_{2}+1}$ a $d_{2}$-polytope such that their affine hulls are skew; then the {\it free join} of the  polytopes $P_{1}$ and $P_{2}$  is the $(d_{1}+d_{2}+1)$-polytope $P:=\conv (P_{1}\cup P_{2})$. The $k$-faces of the free join are as one would expect; see \cite[Exercise 4.8.1]{Gru03} or \cite[Exercise 9.9]{Zie95}.

\begin{lemma}
\label{lem:join-faces} Let $P_{1}$ and $P_{2}$ be two polytopes, and let  $P$ be the free join of $P_{1}$ and $P_{2}$. The $k$-faces of $P$ are precisely the free joins of a face $F_{1}$ of $P_{1}$ and a face $F_{2}$ of $P_{2}$ such that $\dim F_{1}+\dim F_{2}+1=k$.
\end{lemma}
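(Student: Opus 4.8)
The plan is to read the faces of $P=\conv(P_1\cup P_2)$ directly off supporting affine functionals, exploiting the skewness of $\operatorname{aff}P_1$ and $\operatorname{aff}P_2$. First I would record the elementary facts used throughout: since the two affine hulls are skew, the affine hull of $P_1\cup P_2$ is all of $\R^{d_1+d_2+1}$; every $x\in P$ can be written as $x=\lambda x_1+(1-\lambda)x_2$ with $x_i\in P_i$ and $\lambda\in[0,1]$; and $P_1$ and $P_2$ are themselves faces of $P$. I use the convention $\dim\varnothing=-1$, with the free join of a face $F$ and the empty face understood to be $F$ itself; the improper faces $\varnothing$ and $P$ of $P$ are then trivially of the asserted form, so I focus on nonempty proper faces.

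For the ``forward'' inclusion, let $F=P\cap\{f=c\}$ be a nonempty proper face, with $f$ affine and $f\le c$ on $P$. Set $M_i:=\max_{P_i}f$ and let $G_i$ be the face of $P_i$ on which $M_i$ is attained. The identity $f(\lambda x_1+(1-\lambda)x_2)=\lambda f(x_1)+(1-\lambda)f(x_2)$ forces $c=\max\{M_1,M_2\}$ and shows that $F=\conv(G_1\cup G_2)$ when $M_1=M_2$, while $F=G_1$ when $M_1>M_2$ (and symmetrically); so $F$ is the free join of a face $F_1$ of $P_1$ and a face $F_2$ of $P_2$ (one of them possibly empty). Applying the affine-hull computation from the first paragraph to $\operatorname{aff}F_1$ and $\operatorname{aff}F_2$ then gives $\dim F=\dim F_1+\dim F_2+1$.

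For the converse, take faces $F_1$ of $P_1$ and $F_2$ of $P_2$. If one of them is empty, the free join is a face of the other $P_i$, hence a face of $P$ since $P_i$ is; so assume both are nonempty. Choose affine functionals $f_i$ on $\operatorname{aff}P_i$ whose maximum over $P_i$ is attained exactly on $F_i$, and rescale so that these maxima share a common value $c$. Because $\operatorname{aff}P_1$ and $\operatorname{aff}P_2$ are skew, the union of an affinely independent spanning set of $\operatorname{aff}P_1$ with one of $\operatorname{aff}P_2$ is an affinely independent set of $d_1+d_2+2$ points of $\R^{d_1+d_2+1}$, hence spans it affinely; prescribing the value of an affine functional on these points to match $f_1$ on the first part and $f_2$ on the second therefore produces an affine functional $f$ on $\R^{d_1+d_2+1}$ restricting to $f_i$ on $\operatorname{aff}P_i$. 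By the identity of the previous paragraph, $f\le c$ on $P$ with equality exactly on $\conv(F_1\cup F_2)$; hence the free join of $F_1$ and $F_2$ is a face of $P$, of dimension $\dim F_1+\dim F_2+1$.

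I expect the only real obstacle to be the extension step in the converse: one must know that a single affine functional on $\R^{d_1+d_2+1}$ can restrict simultaneously to two independently prescribed affine functionals on $\operatorname{aff}P_1$ and $\operatorname{aff}P_2$, and this is exactly where skewness of the affine hulls (rather than mere disjointness) is needed. The remaining steps are routine manipulation of convex combinations and affine-hull dimensions, and the conclusion matches the standard description of free-join faces in \cite[Exercise 4.8.1]{Gru03} and \cite[Exercise 9.9]{Zie95}, from which it could instead simply be quoted.
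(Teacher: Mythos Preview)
The paper does not prove this lemma at all; it merely states the result and refers the reader to \cite[Exercise 4.8.1]{Gru03} and \cite[Exercise 9.9]{Zie95}. Your argument is correct and is in fact the standard way to do those exercises: exhibit each face of $P$ via a supporting affine functional, split the maximum according to the two summands, and for the converse glue two supporting functionals into one using the skewness of the affine hulls. The only point requiring care is exactly the one you flag---that prescribed affine functionals on $\operatorname{aff}P_1$ and $\operatorname{aff}P_2$ admit a common affine extension to $\R^{d_1+d_2+1}$---and your affinely-independent-spanning-set argument handles it cleanly. So you supply a self-contained proof where the paper simply quotes the literature; the final sentence of your proposal already notes that quoting would have sufficed.
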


We are now able to give the promised characterisation of $d$-polytopes with $d+3$ vertices and only two more edges than the corresponding triplex.

\begin{proposition}
\label{prop:d+3}
Let $P$ be  a nonpyramidal $d$-polytope with $d+3$ vertices and exactly $\phi(d+3,d)+2$ edges. Then its standard contracted Gale diagram is one of the six indicated in \cref{fig:dplus3}. The affix $d-2$ indicates that $d-2$ vertices of $P$ have their transforms located at the same point on the circle.
\begin{enumerate}
   \item The polytope represented by \cref{fig:dplus3}(i) exists  only in dimension $d=3$; it has 7 facets, and is the dual of $\Sigma(3)$;

\item The polytope in \cref{fig:dplus3}(ii) exists in any dimension $d\ge3$; it has $2d+1$ facets, and is the dual of a pentasm;

\item The polytope represented by \cref{fig:dplus3}(iii) exists in any dimension $d\ge4$; it has $2d$ facets;

    \item The polytope represented by \cref{fig:dplus3}(iv) exists in any dimension $d\ge4$; it has $2d-1$ facets;

    \item  The polytope represented by \cref{fig:dplus3}(v) exists  only in dimension $d=4$; it has $8$ facets, and corresponds to the second diagram in \cite[Fig. 6.3.4]{Gru03};

  \item  The polytope represented by \cref{fig:dplus3}(vi) exists only in dimension $d=5$; it has $8$ facets, and is the free join of two quadrilaterals.
\end{enumerate}

\end{proposition}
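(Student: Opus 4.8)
The plan is to work entirely in the Gale diagram, which is two-dimensional since $P$ has $d+3$ vertices. Write $n:=d+3$ and recall from \cref{lem:galefacts} that the missing edges of $P$ are exactly the pairs $\{v,w\}$ whose transforms $V,W$ are \emph{not} contiguous, and that any open semicircle must contain at least two transformed points. The number of missing edges of the triplex $M(3,d-3)$ is $\binom{d+3}{2}-\phi(d+3,d)=\binom{d+3}{2}-\binom{d+1}{2}+2=2d+3$, so a polytope with $\phi(d+3,d)+2$ edges is one with exactly $2d+1$ missing edges. The first step is therefore to reformulate the hypothesis as a purely combinatorial condition on a configuration of $d+3$ points on the circle $S^1$ (some possibly coincident, some possibly at the origin, the latter being the pyramidal vertices — but we are told $P$ is nonpyramidal, so no point is at the origin and at least two distinct directions occur): \emph{the number of non-contiguous pairs is exactly $2d+1$}.

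Next I would set up the counting. Group the transformed vertices by their location on the circle, say there are classes $C_1,\dots,C_m$ of sizes $a_1,\dots,a_m$ ($\sum a_i=d+3$), arranged cyclically; within a class all vertices are mutually adjacent (by the remark preceding \cref{lem:galefacts}, provided $a_i\ge 2$; a singleton class may also be the transform of a simple or near-simple vertex). A pair is \emph{contiguous} precisely when the two classes are cyclically adjacent \emph{and} no third point lies on the short arc — so if some class sits diametrically opposite another, or if the points are spread over more than a semicircle, the contiguity graph on the classes is essentially a cycle or a path. The key structural input is \cref{lem:galefacts}(ii)–(iii): a missing edge forces the transforms of the remaining vertices into a closed semicircle, which severely limits the cyclic spread of the whole diagram. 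I would first dispose of the case where some transform is diametral to another (giving a "big" facet missing two vertices), then handle the generic case where all points lie in an open semicircle or a closed one; in each case the contiguity structure collapses to a short list of combinatorial types of the multiset $(a_1,\dots,a_m)$ together with the positions of the diametral pairs. Because $d-2$ of the $d+3$ vertices must (as the figure's affix asserts) pile up at one point to make the edge count small — a class of size $d-2$ contributes zero missing edges among its own members but is non-contiguous with everything not cyclically adjacent to it — the remaining five vertices have very few admissible configurations, and enumerating them yields exactly the six diagrams of \cref{fig:dplus3}. For each of the six, the dimension restrictions in (i),(v),(vi) come from the requirement that an open semicircle contain $\ge 2$ points (which fails once $d$ is too small or forces the big class to be nonempty), and the stated facet counts and identifications ($\Sigma(3)^*$, $Pm(d)^*$, the free join of two quadrilaterals, etc.) are read off by listing the minimal cofaces, using \cref{lem:join-faces} for the free-join identification in (vi) and the explicit facet lists in \cref{rem:pentasm-facets} and \cite[Fig. 6.3.4]{Gru03} for (ii) and (v).

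Concretely, the steps in order are: (1) translate "$\phi(d+3,d)+2$ edges" into "$2d+1$ missing edges" and into a contiguity condition on $d+3$ circle points with no point at the origin; (2) show that the non-contiguity count forces all but at most a bounded number of vertices to coincide at a single point — in fact a class of size $d-2$ — by a direct estimate: if the largest class has size $a$, the vertices outside it contribute at least roughly $\binom{d+3-a}{2}$ missing edges among themselves plus further missing edges to the big class, and comparing with $2d+1$ forces $a\ge d-2$; (3) with the big class fixed, enumerate the cyclic arrangements of the remaining $\le 5$ points (including coincidences and diametral pairs), keeping only those with exactly $2d+1$ total missing edges and satisfying the open-semicircle axiom; (4) match the survivors with the six diagrams, and for each compute $f_{d-1}(P)$ by counting minimal cofaces and identify the dual/free-join descriptions.

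The main obstacle I expect is step (3): the bookkeeping of which pairs are contiguous is delicate precisely at the boundary cases — when a point is diametrically opposite the big class, when two small classes coincide, or when the spread is exactly a semicircle — and it is easy to miss or double-count a configuration. The safeguard is to use \cref{lem:galefacts}(iv) (the cyclic four-point criterion) systematically as a consistency check on each candidate, and to verify each surviving diagram against the independently known cases: $\Sigma(3)^*$ and $Pm(d)^*$ are already pinned down by \cref{prop:edges}, and the $d\le 6$ catalogues of \cite{FukMiyMor13} (or \cite[Sec. 6.3]{Gru03} for $d=4$) confirm (v) and (vi). Everything else — the facet counts $2d$, $2d-1$ in (iii),(iv), and the fact that these families exist in all $d\ge 4$ — follows routinely once the diagrams are in hand.
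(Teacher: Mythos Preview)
Your plan rests on two errors at the very first step that derail everything that follows. First, the edge count: by \eqref{eq:at-most-2d} we have $\phi_1(d+3,d)=\binom{d+1}{2}+\binom{d}{2}-\binom{d-2}{2}=\tfrac12(d^2+5d-6)$, so a $d$-polytope with $d+3$ vertices and $\phi_1(d+3,d)+2$ edges has exactly
\[
\binom{d+3}{2}-\phi_1(d+3,d)-2=\frac{d^2+5d+6}{2}-\frac{d^2+5d-6}{2}-2=4
\]
missing edges, a constant independent of $d$, not $2d+1$. Your expression $\binom{d+3}{2}-\binom{d+1}{2}+2$ substitutes an incorrect value for $\phi_1$. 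Second, the direction of \cref{lem:galefacts}(ii) is reversed: a missing edge forces the two transforms to be \emph{contiguous}, not non-contiguous. Non-contiguous pairs are automatically edges; contiguous pairs may or may not be.

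With only four missing edges, the proposed counting argument (bounding the largest class size by comparing non-contiguity counts against $2d+1$) has nothing to bite on, and steps (2)--(3) of your plan do not get started. The paper's actual proof exploits the smallness of four directly: it looks at the complement of the graph of $P$, a graph with exactly four edges, notes via \cref{lem:galefacts}(i)--(ii) that these four edges cannot form a cycle, and then does a short case split on their shape. A path of length four yields cases (ii)--(iv), distinguished by the number of diametral pairs among the five transforms; a path of length three forces $d=3$ and gives (i); a path of length two together with a disjoint edge forces $d=4$ and gives (v); four disjoint edges force $d=5$ and give (vi). The cluster of $d-2$ coincident transforms in the figure is not an input obtained by counting; it falls out because, once the path of missing edges is fixed, \cref{lem:galefacts}(ii) pins all remaining transforms to a single location.
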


\begin{figure}
    \centering
    \includegraphics[scale=0.8]{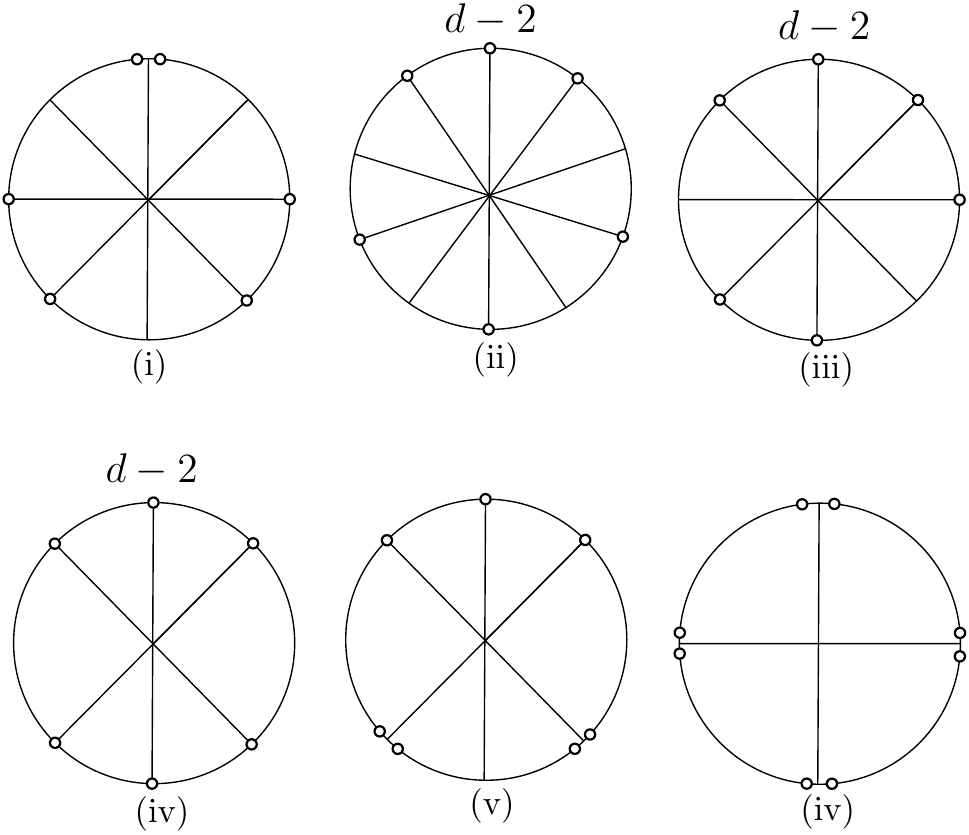}
    \caption{Gale diagrams of nonpyramidal $d$-polytopes with $d+3$ vertices and $\phi(d+3,d)+2$ edges.}
    \label{fig:dplus3}
\end{figure}

\begin{proof}
The graph of $P$ has exactly four missing edges. We consider its complement graph, which is a graph containing just four edges (and a number of isolated vertices). It  follows from \cref{lem:galefacts}(i)-(ii) that these cannot form  a cycle.

First, suppose that the complement of the graph contains a path of length three, but not four. Label the vertices in this path, in order, as $t, u, v,w$. Since $t,u$ and $v,w$ are missing edges, all points in the Gale diagram must be between $-U$ and $-V$. Since $u,v$ is a missing edge, $T$ and $W$ must lie in a closed semicircle not containing $U$ or $V$. The existence of an additional two vertices vertices constituting a fourth missing  edge implies that $T,U,V,W$ all lie in a closed semicircle, which ensures that $T$ and $W$ are diametrically opposite. It follows that there are only six vertices altogether, so $d=3$, and $P$ is in fact the dual of $\Sigma(3)$. This is case (i). We remark that the two other transformed points must be strictly between $-U$ and $-V$, otherwise either $t$ or $w$ would belong to an additional missing edge.

Next, suppose they form a path of length four. We label the five vertices as $t,u,v,w,x$ so that each successive pair is a missing edge of $P$.  Applying \cref{lem:galefacts}(ii) to both the missing edges $t,u$ and $w,x$ ensures that the Gale transforms of all other vertices are located at $-V$. The next question is: how many diametrically opposite pairs there are, amongst $T,U,V,W,X$? If there are none, we are in case (ii), which is easily checked to be the dual of a pentasm: the $d-2$ co-located points correspond to the pentasm facets, the two upper points in the diagram to the simplicial prism facets, and the lower three points to the simplex facets.

If there is one diametrically opposite pair, we are in case (iii). If there are two, we are in case (iv). Note that we must assume $d\ge4$ in these two cases. (If $d=3$, these are Gale diagrams of polytopes, but (ii) would represent a tetragonal antiwedge, which has five missing edges, while (iii) would represent a simplicial prism, which has six missing edges.)

Every facet of $P$ has either $d$ or $d+1$ vertices, so every minimal proper coface contains either two or three transformed vertices. This makes it easy to read off the  minimal cofaces from the  Gale diagram. In case (iii),  there are $2d$ of them, $d-1$ diameters and $d+1$ which contain three points. In case (iv),  there are $2d-1$ minimal cofaces, $d$ diameters and $d-1$ which contain three elements.

Now suppose that the complement of the graph contains a path of length two, but not three. Denote by $u,v$ and $v,w$ the corresponding missing edges. Then the transforms of every other vertex must lie on the arc from $-U$ to $-W$. Let $a,b$ be another missing edge and suppose that one (or both) of $A,B$ is not located at $-U$. Then all transformed points other than $A,B$ must lie in the closed semicircle from $U$ to $-U$; in particular all transformed points other than $A,B,U,V,W$ must be located at $-U$. Repeating this argument for the other missing edge, we see that there can only be seven points altogether, with two located at $-U$ and two located at $-W$. The only possibility is (v), so $d=4$. For the record, its facets are four quadrilateral pyramids and four simplices.

Finally suppose that the complement of the graph contains four disjoint edges. For each such pair of vertices, the transforms of all other vertices must lie in a closed semicircle. Similar reasoning shows  there can only be these eight vertices, so $d=5$ and the Gale diagram (vi) is the only possibility.
\end{proof}

We next establish the main result for the special case $k=d-2$: a $d$-polytope with $2d+1$ vertices and no more ridges than the $d$-pentasm must either be a pentasm, or have $d+2$ facets, or be 3-dimensional.

\begin{proposition}\label{prop:ridges}
Let $P$ be any $d$-polytope.
\begin{enumerate}
    \item If $P$ has at least $d+3$ vertices, at most $\phi(d+3,d)+2$ edges and exactly $2d+1$ facets, then $P$ must be the dual of a pentasm or the dual of $\Sigma(3)$.
    \item A $d$-polytope with  $2d+1$ vertices, at least $d+3$ facets, and no more $(d-2)$-faces than the pentasm must be a pentasm or  $\Sigma(3)$.
\end{enumerate}
\end{proposition}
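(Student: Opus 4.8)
The plan is to deduce part (ii) from part (i) by polar duality, and to prove part (i) with the help of \cref{prop:d+3}. A short computation with \eqref{eq:pentasm} shows $f_{d-2}(Pm(d))=\binom{d+1}{2}+2d-1=\phi(d+3,d)+2$; hence a $d$-polytope $P$ as in (ii) has a dual $P^{*}$ with exactly $2d+1$ facets, at least $d+3$ vertices, and at most $\phi(d+3,d)+2$ edges. Part (i) then identifies $P^{*}$ as the dual of a pentasm or of $\Sigma(3)$, so that $P$ is a pentasm or $\Sigma(3)$. Thus everything reduces to part (i).

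For part (i) the first task is to pin down $f_0(P)$. Every vertex of a $d$-polytope lies on at least $d$ edges, so $f_1(P)\geq\h d f_0(P)$; if $f_0(P)\geq d+5$ this gives $f_1(P)\geq\h(d^2+5d)>\phi(d+3,d)+2$, a contradiction, so $f_0(P)\in\{d+3,d+4\}$. To exclude $f_0(P)=d+4$: for $d\geq 6$ this is immediate, since a $d$-polytope with $d+4$ vertices has at least $\phi(d+4,d)=\phi(d+3,d)+(d-3)>\phi(d+3,d)+2$ edges by \cref{prop:at-most-2d}. The cases $d=3,4,5$ need separate treatment. For $d=3$, Euler's relation with $f_2(P)=7$ forces $f_1(P)=f_0(P)+5=12>11$. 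For $d=5$, \cref{prop:at-most-2d} forces $f_1(P)$ to equal the lower bound $\phi(d+4,d)=24$, hence $P$ is the $(4,1)$-triplex, which has only $7\neq 11$ facets. For $d=4$, \cref{prop:at-most-2d} leaves $f_1(P)\in\{16,17\}$; the value $16$ yields the simplicial $4$-prism, with $6\neq 9$ facets, while $f_1(P)=17$ forces, through Euler's relation and the identity $\sum_{F}f_2(F)=2f_2(P)$ summed over the facets $F$, that $P$ is simplicial with $9$ facets on $8$ vertices, contradicting the dual of \cref{prop:simple} (which gives $f_3\geq 3f_0-10=14$).

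It remains to treat $f_0(P)=d+3$. Write $P$ as an $m$-fold pyramid over a nonpyramidal $(d-m)$-polytope $B$ with $(d-m)+3$ vertices. Forming a pyramid preserves the excess of the number of edges over $\phi$ and raises the number of facets by exactly one at each step, so $B$ has at most $\phi((d-m)+3,d-m)+2$ edges; hence $B$ is a triplex, a pentagon, a tetragonal antiwedge, or one of the six polytopes of \cref{prop:d+3}, and $f_{d-1}(P)=f_{d-m-1}(B)+m$. Running down the list: the triplex contributes $d+2$ facets; a pyramid over a pentagon or over a tetragonal antiwedge contributes $d+3$; cases (iii)--(vi) of \cref{prop:d+3} contribute $2d-m$, $2d-m-1$, $d+4$ and $d+3$ respectively, never $2d+1$; and only case (ii), the dual of a pentasm with $2(d-m)+1+m=2d-m+1$ facets, and case (i), the dual of $\Sigma(3)$ (necessarily in dimension $3$), can attain $2d+1$ facets, each only when $m=0$. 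This leaves exactly the dual of a pentasm and the dual of $\Sigma(3)$, as claimed.

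The main obstacle I anticipate is the non-uniform bookkeeping in the second step: excluding $f_0(P)=d+4$ for $d\in\{3,4,5\}$ requires, in turn, Euler's relation, the classical Lower Bound Theorem, and the uniqueness clause of \cref{prop:at-most-2d}. The remaining work is a finite case analysis once \cref{prop:d+3} is in hand.
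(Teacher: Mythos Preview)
Your proof is correct and follows essentially the same route as the paper: deduce (ii) from (i) by duality, then for (i) force $f_0(P)=d+3$ and invoke \cref{prop:d+3} together with the facet count under iterated pyramids. The only notable difference is that you handle the exclusion of $f_0(P)=d+4$ for $d\in\{3,4,5\}$ by explicit ad hoc arguments (Euler, the simplicial Lower Bound Theorem, and the uniqueness clause of \cref{prop:at-most-2d}), whereas the paper treats $d=3$ by catalogue and asserts the inequality $\phi_1(d+4,d)>\phi_1(d+3,d)+2$ for $d\ge4$; your version is in fact more careful here, since that inequality reads $d-3>2$ and is not strict for $d=4,5$.
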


\begin{proof}
(i) If $d=3$, the hypotheses imply that $f(P)=(6,11,7)$, and the conclusion is easy to check, directly or from catalogues e.g. \cite{BriDun73}. Henceforth we assume that $d\ge4$.

A $d$-polytope with $2d+1$ or more vertices has at least $\h d(2d+1)>\phi_1(d+3,d)+2$ edges. And a $d$-polytope with $d+s$ vertices, where $d\ge s\ge4$, has at least $\phi_1(d+s,d)$ edges. The definition \eqref{eq:at-most-2d} yields that  $\phi_1(d+s,d)$  is an increasing function of $s$, and so
$$\phi_1(d+s,d)\ge \phi_1(d+4,d)>\phi_1(d+3,d)+2$$
for $s\ge4$. Thus a $d$-polytope with $d+s$ vertices, where $d\ge s\ge4$, has more than  $\phi_1(d+3,d)+2$ edges. This leaves only the case when $P$ has exactly $d+3$ vertices.

If $P$ has $\phi(d+3,d)$ edges or $\phi(d+3,d)+1$ edges, then it is a multifold pyramid over a simplicial 3-prism,  a pentagon, or a tetragonal antiwedge, and these respectively have $d+2$, $d+3$, or $d+3$ facets, less than $2d+1$.

\cref{prop:d+3} informs us that, apart from pentasms and $\Sigma(3)$, every nonpyramidal $d$-polytope with $d+3$ vertices and $\phi_1(d+3,d)+2$ edges has strictly less than $2d+1$ facets. Forming a $k$-fold pyramid increases both the dimension and the number of vertices by $k$. The only possibility to have exactly $2d+1$ facets is then to be nonpyramidal, and to be in case (i) or (ii) of the preceding lemma.

(ii) Since $f_{d-2}(Pm(d))=\h(d^2+5d-2)=\phi_1(d+3,d)+2$, the conclusion follows by duality from (i).
\end{proof}

\section{Main result}

Recall our aim of showing that for $d\ge4$, $d$-pentasms and $d$-polytopes with $d+2$ facets are the extremal $d$-polytopes with $2d+1$ vertices.

 We make repeated use of the following three elementary identities, which can be found in numerous sources, e.g. \cite[p.~174]{GKP94}.
\begin{align}
    (a-b)\binom{a}{b}&=a\binom{a-1}{b}\label{eq:binomial1}\\
    \binom{a}{b}&=\binom{a-1}{b-1}+\binom{a-1}{b}\label{eq:binomial2}\\
    \sum_{j=0}^{d}\binom{j}{k}&=\binom{d+1}{k+1}\label{eq:binomial3}
\end{align}

\begin{lemma}
\label{lem:pyramids-dplus3-facets} Fix $t\ge 0$ and $d\ge 3$. Let $P$ be a $d$-polytope with at least $2d+1$ vertices and at least $d+3$ facets. If $P$ is a $t$-fold pyramid over a simple polytope, then
\begin{enumerate}
\item    $ f_k(P)\ge f_k(Pm(d))$, if $k=0$ or $k=d-1$;
\item     $f_k(P)> f_k(Pm(d))$, if $k\in [1,d-2]$.
\end{enumerate}
\end{lemma}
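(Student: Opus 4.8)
The plan is to argue by induction on $d$, stripping off the pyramidal layers one at a time. Write $P$ as a $t$-fold pyramid over a simple $(d-t)$-polytope $Q$. Since each pyramid operation adds a single vertex and replaces the base by one new facet, $f_0(Q)=f_0(P)-t$ and $f_{d-t-1}(Q)=f_{d-1}(P)-t\ge(d-t)+3$; in particular $Q$ is a simple polytope with at least $(d-t)+3$ facets, which already forces $d-t\ge2$. Part (i) is immediate from the hypotheses: $f_0(P)\ge2d+1=f_0(Pm(d))$, and $f_{d-1}(P)\ge d+3=f_{d-1}(Pm(d))$ by \eqref{eq:pentasm} (equivalently \cref{rem:pentasm-facets}). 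Everything below concerns part (ii), with $1\le k\le d-2$.

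The base of the induction is $d=3$, where only $k=1$ occurs. A $3$-polytope that is a $t$-fold pyramid over a simple polytope and has at least $2d+1=7$ vertices is either a simple $3$-polytope with an even, hence at least $8$, number of vertices, or an $n$-gonal pyramid with $n\ge6$; in both cases $f_1(P)\ge12>11=f_1(Pm(3))$. For the remaining cases assume $d\ge4$. If $t=0$, then $P$ is itself a simple $d$-polytope with at least $d+3$ facets, so \cref{prop:simple}, whose bound increases with the number of facets, gives $f_k(P)\ge\binom{d}{k+1}(d+3)-\binom{d+1}{k+1}(d-1-k)$; subtracting $f_k(Pm(d))$ from \eqref{eq:pentasm} and simplifying the difference via \eqref{eq:binomial1} in the form $(d-k)\binom{d+1}{k+1}=(d+1)\binom{d}{k+1}$ and then \eqref{eq:binomial2}, the difference collapses to $\binom{d-1}{k+1}$, which is at least $1$ precisely because $k\le d-2$.

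If $d\ge4$ and $t\ge1$, write $P=\mathrm{pyr}(P')$ with $P'$ the $(t-1)$-fold pyramid over the same $Q$; then $P'$ is a $(d-1)$-polytope with $f_0(P')=f_0(P)-1\ge2d$ and $f_{d-2}(P')=f_{d-1}(P)-1\ge d+2=(d-1)+3$, so $P'$ satisfies the hypotheses of the lemma in dimension $d-1$. The induction hypothesis gives $f_j(P')>f_j(Pm(d-1))$ for $1\le j\le d-3$, while $f_{d-2}(P')\ge d+2=f_{d-2}(Pm(d-1))$ directly. Now substitute $f_k(P)=f_k(P')+f_{k-1}(P')$ and use the Pascal-type identity $f_k(Pm(d-1))+f_{k-1}(Pm(d-1))=f_k(Pm(d))$, valid for $2\le k\le d-2$ and obtained from \eqref{eq:pentasm} and three applications of \eqref{eq:binomial2}. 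For $2\le k\le d-3$ both summands receive the strict inductive bound; for $k=d-2$ the strictness is supplied by $f_{d-3}(P')>f_{d-3}(Pm(d-1))$, the other summand only needing $f_{d-2}(P')\ge f_{d-2}(Pm(d-1))$. For $k=1$ one uses instead the relation $f_1(Pm(d))=f_1(Pm(d-1))+2d$, together with $f_1(P')\ge f_1(Pm(d-1))+1$ and $f_0(P')\ge2d$, which give $f_1(P)\ge f_1(Pm(d))+1$. In every case $f_k(P)>f_k(Pm(d))$, completing the induction.

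The one genuinely delicate point is the case $k=1$. For $k\ge2$ the Pascal identity makes a pyramid over $Pm(d-1)$ carry exactly $f_k(Pm(d))$ faces of dimension $k$, but at $k=1$ the analogous identity $f_1(Pm(d-1))+f_0(Pm(d-1))=f_1(Pm(d))$ fails by one: a pyramid over $Pm(d-1)$ has one fewer edge than $Pm(d)$. The argument recovers strictness only by exploiting that the base $P'$ of our pyramid genuinely has $2d$ vertices, one more than $f_0(Pm(d-1))=2d-1$. Apart from this, the proof is routine bookkeeping with the identities \eqref{eq:binomial1}--\eqref{eq:binomial3}.
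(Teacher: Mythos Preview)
Your proof is correct and follows essentially the same route as the paper: the $t=0$ case is handled via Barnette's simple lower bound theorem, and the inductive step peels off one pyramidal layer and applies the Pascal identity for $f_k(Pm(d))$. Your treatment is in fact slightly more careful than the paper's at the boundary $k=d-2$, where the inductive strictness must come from the $f_{k-1}$ summand since $f_{d-2}(P')$ is only governed by part~(i); the paper glosses over this, while you make it explicit.
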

\begin{proof} The statement (i) is obvious from the hypotheses. Now fix $k\in[1,d-2]$.

We first establish part (ii) when $t=0$. In this case, $P$ is a simple $d$-polytope with at least $d+3$ facets. By the lower bound theorem for simple polytopes (\cref{prop:simple}), we obtain that
\begin{align*}
f_{k}(P)&\ge \binom{d}{k+1}f_{d-1} -\binom{d+1}{k+1}(d-1-k).
\end{align*}

Since $f_{d-1}(P)\ge d+3$ we also have that
\begin{align*}
    f_{k}(P) &\ge \binom{d}{k+1}(d+3) -\binom{d+1}{k+1}(d-1-k)\\
 &=\binom{d}{k+1}(d+3) -\binom{d+1}{k+1}(d-k)+\binom{d+1}{k+1} \\
  &=\binom{d}{k+1}(d+3) -\binom{d}{k+1}(d+1)+\binom{d+1}{k+1}\; \text{by \eqref{eq:binomial1}}\\
   &=\binom{d-1}{k}+\binom{d-1}{k+1}+\binom{d}{k+1}+\binom{d+1}{k+1}\; \text{by \eqref{eq:binomial2}}\\
   &=\binom{d-1}{k+1}+f_{k}(Pm(d))\; \text{by \eqref{eq:pentasm}}\\
   &>f_{k}(Pm(d)).
\end{align*}

Now consider the case $d=3$. We have established this for $t=0$, and the case $t=2$ does not arise, because a tetrahedron has $4<2d+1$ vertices.
For $t=1$, $P$ is a pyramid over an $n$-gon, with $n\ge6$, in which case, we have that $f_1(P)=2n>11=f_1(Pm(3))$. Hence we have the basis case for an inductive argument on $d$. Assume $d\ge 4$.

We may suppose that $t\ge 1$. Then $P$ is a  pyramid over a $(d-1)$-polytope $F$, itself a $(t-1)$-fold pyramid over a simple polytope. Moreover, $F$ has at least $2(d-1)+2$ vertices and at least $(d-1)+3$ facets, and our induction hypothesis applies to $F$. From $P$ being a pyramid over $F$ it follows that
\begin{equation}\label{eq:pyramid-formula}
    f_k(P)=f_k(F)+f_{k-1}(F).
\end{equation}
In  case  $k=1$, $f_1(F)>f_1(Pm(d-1))=(d-1)^2+(d-1)-1$ by the induction hypothesis, and $f_{0}(F)=f_{0}(P)-1$. Thus, from   \eqref{eq:pyramid-formula} we obtain that
\[f_{1}(P)>(d-1)^2+(d-1)-1+2d=d^{2}+d-1=f_{1}(Pm(d)).\]
In  case $k\in[2,d-2]$, the induction hypothesis and \eqref{eq:pyramid-formula}  ensure that
\begin{align*}
f_{k}(P)&=f_{k}(F)+f_{k-1}(F)>f_{k}(Pm(d-1))+f_{k-1}(Pm(d-1))=f_{k}(Pm(d)).
\end{align*}
The last equation is a straightforward application of \eqref{eq:binomial2} to \eqref{eq:pentasm}. We thus have that $f_{k}(P)>f_{k}(Pm(d))$ for $t\ge 1$ and each $k\in[1,d-2]$.
The lemma is now proved.
\end{proof}

A significant corollary of \cref{lem:pyramids-dplus3-facets} is that if a $d$-polytope with $2d+1$ vertices has no more $k$-faces than the pentasm for one value of $k$, and every vertex is either pyramidal or simple, then the polytope must have $d+2$ facets.

\begin{corollary}\label{cor:pyramids-dplus2-facets}
Let $d\ge 3$, and let $P$ be a $d$-polytope with at least $2d+1$ vertices. If $P$ is a multifold pyramid over a simple polytope such that $f_{k}(P)\le f_{k}(Pm(d))$  for some $k\in[1,d-2]$, then $P$ has $d+2$  facets, in which case $P$ is a $t$-fold pyramid over $\Delta(r,s)$ for suitable $r,s,t$ (in particular, $d=r+s+t$).
\end{corollary}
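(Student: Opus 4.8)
The plan is to obtain this as essentially the contrapositive of \cref{lem:pyramids-dplus3-facets}(ii). Suppose $P$ is a $t$-fold pyramid over a simple polytope, has at least $2d+1$ vertices, and satisfies $f_k(P)\le f_k(Pm(d))$ for some $k\in[1,d-2]$. If $P$ were to have at least $d+3$ facets, then all the hypotheses of \cref{lem:pyramids-dplus3-facets} would be satisfied, and part (ii) of that lemma would yield $f_k(P)>f_k(Pm(d))$, contradicting our assumption. Hence $P$ has at most $d+2$ facets.

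Next I would rule out the case of exactly $d+1$ facets. Every $d$-polytope has at least $d+1$ facets, with equality precisely for the $d$-simplex; but the $d$-simplex has only $d+1<2d+1$ vertices (as $d\ge1$), which contradicts the hypothesis on the number of vertices of $P$. Therefore $P$ has exactly $d+2$ facets. At this point \cref{lem:dplus2facets} applies directly and tells us that $P$ is a $t$-fold pyramid over $\Delta(r,s)$ for suitable $r>0$, $s>0$, $t\ge0$ with $d=r+s+t$, which is exactly the asserted conclusion.

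There is no real obstacle here: the corollary follows immediately once \cref{lem:pyramids-dplus3-facets} and \cref{lem:dplus2facets} are available, and none of the binomial identities \eqref{eq:binomial1}--\eqref{eq:binomial3} are needed. The only points requiring care are checking that the hypotheses ``at least $2d+1$ vertices'' and ``at least $d+3$ facets'' of \cref{lem:pyramids-dplus3-facets} are precisely the ones we invoke, and noting that the small case $d=3$ needs no separate treatment since \cref{lem:pyramids-dplus3-facets} already covers it.
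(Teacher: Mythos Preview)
Your proposal is correct and matches the paper's approach: the paper states this result as an immediate corollary of \cref{lem:pyramids-dplus3-facets} without giving a separate proof, and your argument spells out precisely the intended deduction (contrapositive of part (ii), together with the observation that $d+1$ facets would force a simplex, then invoking \cref{lem:dplus2facets}).
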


By virtue of \cref{cor:pyramids-dplus2-facets}, an extremal $d$-polytope $P$ with $2d+1$ vertices other than a multifold pyramid over $\Delta(r,s)$  must have a nonpyramidal, nonsimple vertex in $P$. Our strategy is to divide the problem of finding the $d$-polytopes with $2d+1$ vertices and minimum number of faces into two parts: first for polytopes that have at least one nonpyramidal, nonsimple vertex, and then for polytopes in which every vertex is either pyramidal or simple. In the first case, the only minimisers turn out to be pentasms; in the second case, polytopes with $d+2$ facets also come into play.

Our main result has an inductive step, which runs more smoothly if we present the low-dimensional cases first. Most of the next lemma is routine, but we prove everything for the sake of clarity.

\begin{lemma}\label{lem:five-dim}
Consider a $d$-polytope $P$ with $2d+1$ vertices, where $d\le5$.
\begin{enumerate}
    \item Let $d=3$. If $P$ is $\Sigma(3)$ or a pentasm, then $f(P)=(7,11,6)$. Otherwise, $f_1(P)>11$ and $f_2(P)>6$.
    \item Let $d=4$. If $P$ is $\Delta(2,2)$, then $f(P)=(9,18,15,6)$. If $P$ is  a pentasm, then $f(P)=(9,19,17,7)$. Otherwise, $f_1(P)\ge 20$, $f_2(P)\ge 18$, and $f_3(P)\ge7$.
    \item Let $d=5$. If $P$ is  a pentasm, then $f(P)=(11,29,36,24,8)$. Otherwise, $f_1(P)\ge 30$, $f_2(P)\ge 38$, $f_3(P)\ge 25$, and $f_4(P)\ge8$.
\end{enumerate}
\end{lemma}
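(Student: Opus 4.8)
The plan is to dispose of the three dimensions $d=3,4,5$ essentially by brute force, leaning on the structural results already in hand: \cref{prop:edges} (the edge-minimisers with $2d+1$ vertices), \cref{prop:ridges} (the $(d-2)$-face minimisers), \cref{lem:capped} (capped prisms beat pentasms), \cref{cor:pyramids-dplus2-facets} (a multifold pyramid over a simple polytope with $f_k\le f_k(Pm(d))$ has $d+2$ facets, hence is a $t$-fold pyramid over $\Delta(r,s)$), and the explicit $f$-vector formulas \eqref{eq:pentasm} and \eqref{eq:dplus2facets}. First I would record the claimed $f$-vectors by direct substitution into \eqref{eq:pentasm} for the pentasms and into \eqref{eq:dplus2facets} for $\Delta(2,2)=\Delta^0(2,2)$ (here $r=s=2$, $t=0$, $d=4$): one checks $f(Pm(3))=(7,11,6)$, $f(Pm(4))=(9,19,17,7)$, $f(Pm(5))=(11,29,36,24,8)$, and $f(\Delta(2,2))=(9,18,15,6)$. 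Note also that $\Sigma(3)$, having seven vertices and eleven edges in $\R^3$, satisfies $f_2=f_1-f_0+2=6$ by Euler, giving $f(\Sigma(3))=(7,11,6)$.

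For $d=3$, Euler's formula ties everything to $f_1$: a $3$-polytope with $7$ vertices has $f_1\ge 11$ with equality exactly for $Pm(3)$ and $\Sigma(3)$ by \cref{prop:edges}(i), and $f_2=f_1-5$, so $f_1>11$ forces $f_2>6$; this is part (i). For $d=4$, \cref{prop:edges}(ii) already identifies $\Delta(2,2)$ (the unique $9$-vertex $4$-polytope with $18$ edges) and $Pm(4)$ (the unique one with $19$ edges) as the only polytopes with $f_1\le 19$, so any other $4$-polytope with $9$ vertices has $f_1\ge 20$. For such a polytope I must still show $f_2\ge 18$ and $f_3\ge 7$. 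The $f_3\ge 7$ bound is \cref{prop:at-most-2d} with $d=4$, $s=5>d$? — no, $s=5>d=4$, so \cref{prop:at-most-2d} does not directly apply; instead I would invoke \cref{prop:ridges}(ii): a $4$-polytope with $9$ vertices, at least $d+3=7$ facets, and $f_2\le f_2(Pm(4))=17$ must be a pentasm or $\Sigma(3)$ (impossible in dimension $4$), so if $P$ is neither $\Delta(2,2)$ nor $Pm(4)$ and has $\ge 7$ facets then $f_2\ge 18$; the remaining sub-case $f_3\le 6$ means $f_3\in\{5,6\}$, i.e. $d+2=6$ facets (five is impossible for a $4$-polytope with $9$ vertices since a simplex has $5$ vertices), whence by \cref{lem:dplus2facets} $P$ is a $t$-fold pyramid over $\Delta(r,s)$ with $r+s+t=4$; enumerating the finitely many such types ($\Delta(2,2)$, $\Delta(1,2)$-based pyramids, $\Delta(1,1)$-based double pyramids) and comparing vertex counts shows $\Delta(2,2)$ is the only one with exactly $9$ vertices. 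That pins down $f_3\ge 7$ for all other $P$, and then $f_2\ge 18$ follows as above.

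The dimension $d=5$ case is the most laborious and I expect it to be the main obstacle, since there is no Euler relation to collapse the $f$-vector and the pentasm is the unique edge-minimiser (\cref{prop:edges}(iii)) but I still need the three inequalities $f_2\ge 38$, $f_3\ge 25$, $f_4\ge 8$ for every other $5$-polytope with $11$ vertices. I would split on the facet count $f_4$. If $f_4=7=d+2$, then by \cref{lem:dplus2facets} $P=\Delta^t(r,s)$ with $r+s+t=5$; running through the finitely many combinatorial types and using \eqref{eq:dplus2facets} to compute their vertex numbers, one discards all those without exactly $11$ vertices — and one must check the survivors satisfy the three inequalities (this is where \cref{lem:capped} and the explicit comparison with \eqref{eq:pentasm} do the work, noting also \cref{cor:dprime}: $5$ is prime, so in fact there is \emph{no} $5$-polytope with $d+2=7$ facets and $11$ vertices, which kills this case entirely). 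If $f_4\ge 8$, then $f_4\ge 8=f_4(Pm(5))$ is immediate; for $f_3$ and $f_2$ I would apply \cref{cor:pyramids-dplus2-facets} and \cref{prop:ridges}(ii): if $P$ is a multifold pyramid over a simple polytope then $f_k>f_k(Pm(5))$ for $k\in[1,3]$ unless $P$ has $7$ facets (excluded), so $f_2,f_3$ strictly exceed the pentasm values; otherwise $P$ has a nonpyramidal nonsimple vertex $v$, and \cref{cor:number-faces} applied with that vertex (degree $\ge 6$ since $v$ is nonsimple in a $5$-polytope) yields, for $k=3$, a lower bound $\binom{5}{3}+\binom{4}{3}-\binom{3}{3}+\sum_{i=2}^{5}\binom{6-i}{3}$ on the number of $3$-faces through $S$ — and the arithmetic here, together with the $3$-faces avoiding $S$, should clear $25$; similarly for $k=2$ against the target $38$. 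The genuinely fiddly part is the finite combinatorial enumeration of $t$-fold pyramids over $\Delta(r,s)$ in dimensions $4$ and $5$ and confirming their vertex counts and $f$-vectors, but it is entirely mechanical via \eqref{eq:dplus2facets}; I would present it as a short table rather than prose.
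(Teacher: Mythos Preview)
Your treatment of $d=3$ and $d=4$ is essentially correct and close in spirit to the paper's (the paper uses Euler's relation $f_2=f_1+f_3-9$ directly for $d=4$ rather than invoking \cref{prop:ridges}(ii), but your route works too). The real problem is $d=5$, specifically the bounds $f_3\ge25$ and $f_2\ge38$ in the sub-case where $P$ has a nonpyramidal nonsimple vertex.

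The appeal to \cref{cor:number-faces} does \emph{not} clear the targets. Take $k=3$: with $v_1$ nonsimple (so $\deg_P v_1\ge6$), a facet $F$ missing $v_1$, and $S$ the $r$ vertices outside $F$, the bound is
\[
f_3(P)\;\ge\;\phi_2(\deg v_1,4)+\sum_{i=2}^{r}\binom{6-i}{3}+f_3(F).
\]
Even with $\deg v_1=8$ (the maximum allowed by the hypothesis of \cref{prop:number-faces}(ii)) one has $\phi_2(8,4)=14$; for $r=2$ the middle sum is $4$ and $F$ is a $4$-polytope with $9$ vertices, so $f_3(F)\ge6$, giving only $24$. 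Running through $r=3,4,5,6$ never exceeds $24$. For $k=2$ the shortfall is worse: with $r=2$ you get $\phi_1(6,4)+\binom{4}{2}+f_2(F)\ge13+6+15=34$, well below $38$. So ``the arithmetic\ldots should clear $25$'' is false as stated, and the $f_2$ claim fails by a wider margin.

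For $f_3$ you can in fact rescue the argument with a tool you already listed but did not deploy: \cref{prop:ridges}(ii) applies directly (since $f_4\ge8=d+3$) and gives $f_3\ge25$ for any non-pentasm $P$, with no case split needed. But $f_2\ge38$ genuinely requires a different idea. The paper proceeds by duality: it sharpens the ridge bound to $f_3\ge25,26,27,28$ according as $f_4=8,9,10,\ge11$, using \cite[Theorem~19]{PinUgoYosLBT} on $P^*$ for the middle two cases (e.g.\ if $f_4=9$ and $f_3\le25$ then $P^*$ is a $5$-polytope with $9$ vertices and $\le25$ edges, hence $M(4,1)$, which has $7\ne11$ facets). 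This yields $f_3-f_4\ge17$ in every case, and then Euler--Poincar\'e gives $f_2=f_1+(f_3-f_4)-f_0+2\ge30+17-11+2=38$. Your proposal is missing this entire mechanism for $f_2$.
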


\begin{proof}
(i) This is an easy exercise from Steinitz's theorem. One may also consult catalogues such as \cite[Fig.~4]{BriDun73}.

(ii) The $f$-vectors of $\Delta(2,2)$ and the 4-pentasm are easy to verify. Any 4-polytope with $f_0=9$  and $f_3=6$  must have $f_1\ge18$ and $f_2\le15$, and by Euler's relation we know that $f_1=f_2+3$. The only possibility is the simple polytope $\Delta(2,2)$. If $P$ is any 4-polytope with 9 vertices other than these two, then clearly $f_3\ge7$, and \cref{prop:edges} implies that $f_1\ge20$. Thus $f_2=f_1+f_3-9\ge18$.

These bounds are tight: a pyramid over a cube has $f$-vector $(9,20,18,7)$.

(iii) Again, the $f$-vector of the pentasm is clear. Suppose $P$ has 11 vertices but is not a pentasm.  \cref{prop:edges} then informs us that $f_1\ge30$, and $d$ being prime means $f_4\ge8$ by \cref{cor:dprime}.

Next we show that $f_{3}\ge 25$; the proof depends on the value of $f_{4}$. If $f_4=8$, \cref{prop:ridges}(ii) ensures that $f_3\ge25$. We claim that if $f_4=9$, then in fact $f_3\ge26$. According to \cite[Theorem 19]{PinUgoYosLBT}, the only 5-polytope with 9 vertices and 25 or fewer edges is the triplex $M(4,1)$, which has $7\ne11$ facets. The claim follows by duality. Next we claim that if $f_4=10$, then $f_3\ge27$. Again, by \cite[Theorem 19]{PinUgoYosLBT}, the only 5-polytope with 10 vertices and 26 or fewer edges is the simplicial prism $M(5,0)$, which has $7\ne11$ facets. The claim follows by duality. In the case that $f_4\ge11$, we have $2f_3\ge5f_4\ge55$, so $f_3\ge28$. So we have that $f_3\ge25$ in all cases.

We next claim that $f_3-f_4\ge17$. This is clear if $f_4=8,9,10 $ or 11. If $f_4\ge12$, we have $f_3-f_4\ge{\frac52}f_4-f_4\ge18$ as required. Finally \[f_2=f_1+f_3-f_4-f_0+2\ge30+17-11+2=38.\]
This completes the proof of the lemma.
\end{proof}

We recall that there are four polytopes with $f$-vector $(10,21,18,7)$ \cite[Sec. 6]{PinUgoYosEXC}; a pyramid over any of them will have $f$-vector $(11,31,39,25,8)$. Moreover a 5-dimensional capped prism has $f_0=11$ and $f_1=30$. Thus, in \cref{lem:five-dim}(iii), the bounds for $f_1,f_3$ and $f_4$ are tight. However the bound for $f_2$ is not. It can also be proved that there is no 5-polytope with $f_0=11$ and $f_2=38$, but the proof of this is long, and its inclusion would be an unnecessary digression from the aim of this paper.

Our next step is to show that our minimising polytopes have very restricted facial structure.

\begin{lemma}\label{lem:nonpyramidal} Fix $d\ge 6$, and let $P$ be a $d$-polytope with $2d+1$ vertices and with a nonpyramidal, nonsimple vertex. Suppose $f_k(P)\le f_k(Pm(d))$ for some $k\in [2,d-2]$. If $F$ is a facet avoiding at least one nonpyramidal nonsimple vertex, then either
\begin{enumerate}
\item   $F$   is a triplex $M(2,d-3)$ and every vertex outside $F$ has degree at most $d+1$; or
\item   $F$  is a simplicial $d$-prism $M(d-1,0)$ and every vertex outside $F$ has degree at most $d+1$; or
\item   $F$  contains exactly $2d-1$ vertices, one (or more) of which is nonpyramidal and nonsimple in $F$; consequently F has at least $d -1+3$ facets (i.e. $F$ contains $d+2$ ridges of $P$); or
\item $F$ is a simplex, but there is another facet $F'$ that is not a simplex and also avoids at least one nonpyramidal nonsimple vertex; hence $F'$ falls in one of the previous cases (i)--(iii).
\end{enumerate}
\end{lemma}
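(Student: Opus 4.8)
\textbf{Proof proposal for \cref{lem:nonpyramidal}.}

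The plan is to fix a facet $F$ avoiding at least one nonpyramidal nonsimple vertex $v$, and to run a counting argument that bounds $f_k(P)$ from below by combining (a) the contribution of the $k$-faces inside $F$, which is controlled by \cref{prop:at-most-2d} applied to $F$, and (b) the contribution of $k$-faces meeting $V\setminus F$, which is controlled by \cref{cor:number-faces}. The point of the case division is exactly the trade-off between how many vertices $F$ has and how many faces its complement can be forced to carry. Write $m:=f_0(F)$, so $2\le m\le 2d$ (it cannot be $2d+1$ since $v\notin F$), and note $F$ is a $(d-1)$-polytope with $m$ vertices, to which \cref{prop:at-most-2d} gives $f_k(F)\ge\phi_k(m,d-1)$. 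The vertices outside $F$ number $2d+1-m\ge 1$; applying \cref{cor:number-faces} to a sequence $S$ of those $r:=2d+1-m$ vertices, beginning with $v$ (which is nonsimple), yields that the number of $k$-faces meeting $S$ is at least $\binom{d}{k}+\binom{d-1}{k}-\binom{d-2}{k}+\sum_{i=2}^{r}\binom{d-i+1}{k}$. Since every $k$-face is either contained in $F$ or meets $V\setminus F$, adding these two bounds and comparing with $f_k(Pm(d))=\binom{d+1}{k+1}+\binom{d}{k+1}+\binom{d-1}{k}$ from \eqref{eq:pentasm} gives an inequality in $m$ that, for most values of $m$, already exceeds $f_k(Pm(d))$ and so contradicts the hypothesis.

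The bulk of the work is therefore the case analysis on $m$, carried out uniformly in $k\in[2,d-2]$ using the binomial identities \eqref{eq:binomial1}–\eqref{eq:binomial3}. For $m\le 2d-2$ the complement has $r\ge 3$ vertices, and the extra terms $\sum_{i=2}^{r}\binom{d-i+1}{k}$ together with the nonsimple bonus $\binom{d}{k}-\binom{d-2}{k}$ push the total strictly above $f_k(Pm(d))$; here one has to be a little careful when $\phi_k(m,d-1)$ is small, i.e. when $m$ is close to $d$, but then $r=2d+1-m$ is large and the sum compensates. This eliminates everything except $m\in\{2d-1,2d\}$. If $m=2d$, then $F$ is a $(d-1)$-polytope with $2(d-1)+2$ vertices, and the single outside vertex $v$ contributes at least $\binom{d}{k}+\binom{d-1}{k}-\binom{d-2}{k}$; matching against $f_k(Pm(d))$ forces $f_k(F)=\phi_k(2d,d-1)=\phi_k((d-1)+(d-1),d-1)$, so by \cref{prop:at-most-2d} $F$ is the triplex $M(d-1,0)$, a simplicial $(d-1)$-prism — wait, more precisely one checks that the relevant triplex with these parameters is either $M(2,d-3)$ or $M(d-1,0)$ depending on how the equality is realised, giving cases (i) and (ii); and equality throughout forces every outside vertex to have degree exactly matching the $\phi$-bound, i.e. degree at most $d+1$ (indeed the nonsimple $v$ must have degree exactly $d+1$, and there is at most one outside vertex, but the statement is phrased to cover the possibility that $F$ shares its classification with several outside vertices). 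If $m=2d-1$, then $F$ has $2(d-1)+1$ vertices; if $F$ were a pentasm or had $\le\phi_{d-2}(d+2,d-1)$ ridges the counting would again contradict the hypothesis, so $F$ must itself contain a nonpyramidal nonsimple vertex and hence, by the $d+2$-facet characterisation \cref{lem:dplus2facets} together with \cref{cor:pyramids-dplus2-facets} (or directly by a facet count), have at least $(d-1)+3$ facets; this is case (iii).

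Finally, for case (iv): if the particular facet $F$ we picked is a simplex, then $m=d<2d-1$ for $d\ge 3$, so the generic argument of the previous paragraph would already apply and give a contradiction — \emph{unless} the slack in that argument was used up elsewhere. The clean way to get (iv) is instead structural: since $v$ is nonsimple it lies in at least $d+1$ facets, and since $v$ is nonpyramidal $P$ is not a pyramid with apex $v$; a short argument (every vertex lies in some facet avoiding it; if \emph{every} such facet were a simplex one derives a contradiction with $P$ having $2d+1>d+2$ vertices, e.g. via \cref{lem:dplus2facets} and \cref{cor:pyramids-dplus2-facets} applied after noting all nonsimple vertices would then be pyramidal) shows there is a non-simplex facet $F'$ also avoiding a nonpyramidal nonsimple vertex, and $F'$ then satisfies (i), (ii) or (iii). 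The main obstacle I expect is the bookkeeping in the $m\le 2d-2$ elimination step: one needs the inequality $\phi_k(m,d-1)+\binom{d}{k}+\binom{d-1}{k}-\binom{d-2}{k}+\sum_{i=2}^{2d+1-m}\binom{d-i+1}{k}>\binom{d+1}{k+1}+\binom{d}{k+1}+\binom{d-1}{k}$ to hold for \emph{all} $k\in[2,d-2]$ simultaneously, and verifying this uniformly — rather than for fixed $k$ — requires telescoping the sum via \eqref{eq:binomial3} and then a monotonicity-in-$m$ argument, with the tightest case being $m=2d-2$ (equivalently $r=3$), which is presumably why the bound in the lemma lands exactly at "degree at most $d+1$" for the surviving cases.
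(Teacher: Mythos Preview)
Your overall framework---lower bounding $f_k(P)$ by $f_k(F)+\text{(faces meeting $V\setminus F$)}$ via \cref{prop:at-most-2d} and \cref{cor:number-faces}---is exactly the paper's approach, but your case division is wrong, and this is a genuine gap, not just bookkeeping.

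You claim the counting eliminates all $m\le 2d-2$, leaving only $m\in\{2d-1,2d\}$. First, $m=2d$ cannot occur: the single outside vertex would make $P$ a pyramid over $F$, contradicting that it is nonpyramidal. Second, and more importantly, the counting does \emph{not} eliminate all $m\le 2d-2$. If you carry out the telescoping carefully (as the paper does, parametrising by $r=2d+1-m$), the chain of inequalities collapses to
\[
f_k(P)\ge f_k(Pm(d))-\binom{r-2}{k+1}+\sum_{j=0}^{r-4}\binom{d-3-j}{k},
\]
and the residual term vanishes (rather than being strictly positive) precisely when $r=3$ or $r=d$. These are cases (ii) and (i): $M(d-1,0)$ has $2d-2$ vertices and $M(2,d-3)$ has $d+1$ vertices, not $2d$. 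So the surviving values of $r$ are $\{2,3,d,d+1\}$, and the triplex identification comes from equality in \eqref{eq:nonpyramidal1}, which forces $F=M(d+2-r,r-3)$. Your attempt to locate both (i) and (ii) at $m=2d$ simply cannot work, since \cref{prop:at-most-2d} does not even apply to a $(d-1)$-polytope with $2d>2(d-1)$ vertices.

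For case~(iii) ($r=2$), you suggest a counting argument, but there is no usable lower bound on $f_k(F)$ here: $F$ has $2(d-1)+1$ vertices, which is exactly the regime the whole paper is about, and you cannot invoke induction inside this lemma. The paper instead argues structurally: assume every vertex of $F$ is pyramidal or simple, so $F=\Delta^t(m,n)$ with $mn=d-1$; then by analysing ridges and adjacencies one manufactures another facet $F'$ avoiding a nonpyramidal nonsimple vertex with $r'\in[3,d+1]\setminus[4,d-1]$, forcing $m=n=2$ and $d=5$, contrary to $d\ge6$. For case~(iv) ($r=d+1$), your sketch is too vague: the paper shows via duality that some ridge lies in two simplex facets both missing $v_1$, and then a second counting argument on $F'\cup F''$ gives the contradiction. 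Both of these steps require real work beyond what you have written.
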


\begin{proof} Choose a nonpyramidal nonsimple vertex, and let $F$ be a facet of $P$ not containing it. The facet $F$ is not the base of a pyramid, so we may suppose that $F$ has exactly $d-1+d+2-r$ vertices with $r\in[2,d+1]$. There are $r$ vertices outside $F$, say $S:=\left\{v_{1},v_{2},\ldots,v_{r}\right\}$, none of them pyramidal. Without loss of generality, suppose that $v_{1}$ has the largest possible degree among the vertices in $S$; clearly $v_1$ is nonsimple.

(i)-(ii) Consider the case $r\in [3,d]$. Since $d+2-r\le d-1$, it follows from \cref{prop:at-most-2d} that  $f_{k}(F)\ge \phi_{k}(d-1+d+2-r,d-1)$ for $k\in[1,d-2]$. The $k$-faces in $P$ consist of the $k$-faces of $F$ plus the $k$-faces containing at least one of the vertices in $S$. \cref{cor:number-faces}(ii) then informs us that
\begin{align}
f_{k}(P)&\ge f_{k}(F)+ \phi_{k-1}(\deg_{P}(v_{1}),d-1)+\sum_{i=2}^{r} \binom{d-i+1}{k}\nonumber\\
&\ge \phi_{k}(d-1+d+2-r,d-1)+ \phi_{k-1}(d+1,d-1)+\sum_{i=2}^{r} \binom{d-i+1}{k}\label{eq:nonpyramidal1}\\
&= \binom{d}{k+1}+\binom{d-1}{k+1}-\binom{r-2}{k+1}+ \binom{d}{k}+\binom{d-1}{k}-\binom{d-2}{k}+\sum_{i=2}^{r} \binom{d-i+1}{k}\nonumber\\
&= \binom{d+1}{k+1}+\binom{d}{k+1}-\binom{r-2}{k+1}-\binom{d-2}{k}+\sum_{i=2}^{r} \binom{d-i+1}{k}\nonumber\\
&= f_{k}(Pm(d))-\binom{r-2}{k+1}+\sum_{i=4}^{r} \binom{d-i+1}{k}\nonumber\\
&= f_{k}(Pm(d))-\binom{r-2}{k+1}+ \sum_{j=0}^{r-4}\binom{d-3-j}{k}\nonumber\\
&\ge f_{k}(Pm(d))-\binom{r-2}{k+1}+ \sum_{j=0}^{r-4}\binom{r-3-j}{k}\label{eq:nonpyramidal2}\\
&= f_{k}(Pm(d))-\binom{r-2}{k+1}+\sum_{j=0}^{r-3}\binom{j}{k}\nonumber\\
&= f_{k}(Pm(d))\nonumber\\
&\ge f_k(P)\nonumber
\end{align}
where at the end we used \eqref{eq:binomial3}. Clearly none of the inqualities can be strict. Equality in \eqref{eq:nonpyramidal1} forces the conclusions that $F$ is a triplex (more precisely, $F$ is  $M(d+2-r,r-3)$), and that the vertex $v_{1}$ has degree precisely $d+1$. Equality in \eqref{eq:nonpyramidal2} forces the conclusion that  $r=3$ or $r=d$. Since each vertex $v_{i}\in S$ is nonpyramidal in $P$, and $v_{1}$ has the largest degree amongst  them, we have $\deg_{P}(v_{i})\le d+1$, for each $i$.
This settles the case $r\in [3,d]$, putting $F$ into either case (i) or case (ii).

(iii) Next we look at the case $r=2$, meaning that $F$ has $2d-1$ vertices and that $S=\left\{v_{1},v_{2}\right\}$. We want to show that $F$ falls into case (iii); let us consider the possibility that it does not. Then every vertex of $F$ is pyramidal or simple therein.

If this is the case, then \cref{lem:dplus2facets} ensures that $F$ is a $t$-fold pyramid over $\Delta(m,n)$, where $m+n+t=d-1$. Without loss of generality, assume that $m\le n$. Clearly $f_{0}(F)=(m+1)(n+1)+t=2d-1$, which implies that $mn=d-1$. This precludes the possibility that $m=1$. Hence $2\le m\le n$.

Every ridge $R$ of $P$ that is contained in $F$ is either $\Delta^{t-1}({m,n})$, $\Delta^{t}({m-1,n})$, or $\Delta^{t}({m,n-1})$ (\cref{rem:dplus2facets-facets}), and so has at least $mn+m+t$ vertices. For $i=1,2$, let $F_i$ be a facet of  $P$ containing $v_i$ but not $v_{3-i}$. We claim that each $F_i$ is a pyramid with apex $v_i$. Let $R$ be an arbitrary ridge contained in $F_i$ but not containing $v_i$. Clearly $R\subset F$ and $R\subset F_i$, which forces $R=F\cap F_i$. Thus $R$ is the unique ridge in $F_i$ not containing $v_i$, making $F_i$ a pyramid over $R$. In particular, each $v_i$ is  adjacent to every  vertex in $R$, so $v_1$ and $v_2$ between them have at least $2(mn+m+t)$ edges running into $F$. This is $(m-1)(n+1)+t$ more than the number of vertices in $F$, so $(m-1)(n+1)+t$ vertices in $F$ must be adjacent to both $v_1$ and $v_2$. Just $t$ vertices in $F$ are nonsimple in $F$, so at least $(m-1)(n+1)$ simple vertices in $F$ are nonsimple in $P$; let us fix one such vertex $u$, and note that $u$ is both nonsimple and nonpyramidal in $P$. Choose a ridge $R'$ in $F$ of the form $\Delta^{t}({m-1,n})$ that avoids $u$. Then $R'$ avoids $n+1$ vertices of $F$ altogether, and so does $F'$, the other facet corresponding to $R'$. In particular, $f_{0}(F')$ is either $2d-n$ or $2d-n-1$, depending on whether it contains one or both $v_i$. Write $f_{0}(F')$ as $d-1+d+2-r'$. Then $2d-n\le 2d-2$, implying that $r'\in [3,d+1]$.

Since $F'$  avoids the nonpyramidal nonsimple vertex $u$,  the preceding parts show that $r'$ cannot be in $[4,d-1]$, and so $f_{0}(F')$ is either $d$ (if $r'=d+1$), $d+1$ (if $r'=d$), or $2d-2$ (if $r'=3$). Recalling that $mn=d-1$, the only integer solution for $f_{0}(F')$, with the constraint
$2\le m\le n$, is $m=n=2$ and $d=5$, contradicting our assumption that $d\ge 6$. Thus $F$ falls into Part (iii) after all. From \cref{lem:dplus2facets}, this means $F$ has at least $(d-1)+3$ facets.

(iv) Finally, consider the case $r=d+1$; that is, $F$ is a simplex. We want to establish the conclusion of (iv). Assume, if possible, that every facet not containing $v_{1}$ is a simplex.

We claim that there is a ridge $R$ in $P$ that is the intersection of two facets $F'$ and $F''$, neither of which contains $v_{1}$. Otherwise, suppose that every ridge of $P$ is contained a facet which contains $v_{1}$. We now work on the dual polytope $P^{*}$ of $P$; let $\sigma$ be an anti-isomorphism from the face lattice of $P$ to the face lattice of $P^{*}$. By duality, every edge in $P^{*}$ has one vertex in the facet $\sigma({v_{1}})$ of $P^{*}$ associated with $v_{1}$. This means that there cannot be two distinct vertices outside $\sigma({v_{1}})$. As a result, $P^{*}$ would be a pyramid with base $\sigma({v_{1}})$. The dual statement of this is that  $v_{1}$ would be pyramidal at $P$, contrary to hypothesis. Thus our assumption is false, and the claims holds.

The facets $F'$ and $F''$ are both simplices, and $F'\cup F''$ contains $d+1$ vertices. Denote by $S':=\left\{v_{1}',v_{2}',\ldots,v'_{d}\right\}$  the vertices outside $F'\cup F''$; we may choose $v_{1}'=v_{1}$.   The $k$-faces in $P$ include the $k$-faces of $F'\cup F''$ and the $k$-faces containing at least one of the vertices in $S'$.  \cref{cor:number-faces}(ii)  gives the following inequalities for $k\in [2,d-2]$:
\begin{align*}
f_{k}(P)&\ge f_{k}(F'\cup F'')+\binom{d}{k}+\binom{d-1}{k}-\binom{d-2}{k}+\sum_{i=2}^{d} \binom{d-i+1}{k}\\
&\ge\binom{d}{k+1}+\binom{d-1}{k}+\binom{d}{k}+\binom{d-1}{k}-\binom{d-2}{k}+\sum_{i=2}^{d} \binom{d-i+1}{k}\\
&=\binom{d}{k+1}+\binom{d-1}{k}+\binom{d-2}{k-1}+\sum_{i=1}^{d} \binom{d-i+1}{k}\quad \text{(by \eqref{eq:binomial2})}\\
&=\binom{d}{k+1}+\binom{d-1}{k}+\binom{d-2}{k-1}+ \binom{d+1}{k+1}\quad \text{(by \eqref{eq:binomial3})}\\
&=f_{k}(Pm(d))+\binom{d-2}{k-1} \quad \text{(by \eqref{eq:pentasm})}\\
 &>f_{k}(Pm(d)) \quad \text{(since $k\in[2,d-2]$)},
\end{align*}
contrary to hypothesis. So our initial assumption in this case is false, i.e. there must be a nonsimplex facet avoiding $v_{1}$. This establishes case (iv), completing the proof of the lemma.
\end{proof}

Before continuing, we need some basic facts about {\it cyclic polytopes}. These were first defined by Gale \cite{Gal63} as follows. The {\it moment curve} in $\mathbb{R}^d$ is defined by $x(t):=(t,t^{2},\ldots,t^{d})$ for $t\in \mathbb{R}$, and the convex hull of any $n>d$ distinct points on it gives a {\it cyclic polytope} $C(n,d)$. Gale \cite{Gal63} calculated the $f$-vectors of cyclic polytopes, and showed that the combinatorial type of the polytope does not depend on which points on the moment curve have been chosen. For further details see also \cite[Sec. 4.7]{Gru03} or \cite[p. 11 ff.]{Zie95}. The cyclic 3-polytope with $n$ vertices,  $C(n,3)$ can be obtained by stacking a simplex $n-4$ times; see \cite[Fig. 1]{PerShe67}. We only need to know the following fact about cyclic 4-polyopes: $C(n,4)$ has $\h n(n-3)$  facets, and thus its dual $C(n,4)^{*}$  contains $\h n(n-3)$ vertices and $n$ facets. We note in passing that each facet of $C(n,4)^{*}$ is $C(n-1,3)^{*}$, i.e. the so called $(n-2)$-wedge; see \cite[Thm.~6]{PerShe67} and the paragraph that follows.

The next result is contained in the construction in \cite[10.4.1]{Gru03}; we make the details explicit here. The non-existence of simple 4-polytopes with six, seven or ten vertices is well known, \cite[10.4.2]{Gru03} or \cite[Lemma 2.19]{PinUgoYosEXC}.

\begin{lemma}\label{f-vector4-polytope}
For a fixed $f_0$,  the minimum value of $f_3(P)$, amongst all 4-polytopes with $f_0$ vertices, is the unique integer $n$ satisfying $\binom{n-2}{2}\le f_0\le\binom{n-1}{2}-1$. If $f_0\ge11$, then there is a simple 4-polytope with $f_0$ vertices and this minimum number of facets.
\end{lemma}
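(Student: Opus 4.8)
The plan is to separate the lower bound, which holds for every $4$-polytope, from the construction of simple minimisers when $f_0\ge11$; the first part is short and the second is where the work lies.

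\emph{Lower bound.} If $P$ is a $4$-polytope with $f_0$ vertices and $f_3$ facets, then $P^{*}$ is a $4$-polytope with $f_3$ vertices and $f_0$ facets, so the Upper Bound Theorem (which for $4$-polytopes says that one with $m$ vertices has at most $f_3(C(m,4))=\h m(m-3)$ facets) gives $f_0=f_3(P^{*})\le f_3(C(f_3,4))=\h f_3(f_3-3)=\binom{f_3-1}{2}-1$, using the facet count of $C(n,4)$ recalled before the lemma. Hence $f_3$ is at least the smallest integer $n$ with $f_0\le\binom{n-1}{2}-1$, and minimality of $n$ forces $f_0>\binom{n-2}{2}-1$. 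Since the closed integer intervals $[\binom{n-2}{2},\binom{n-1}{2}-1]$ are consecutive and partition $\{0,1,2,\dots\}$, this $n$ is unique and $f_3(P)\ge n$. A $4$-polytope with $f_0$ vertices and exactly $n$ facets always exists — for $f_0=\binom{n-1}{2}-1$ take $C(n,4)^{*}$, and the remaining finitely many small values $f_0\le10$ can be read off from the classification of $4$-polytopes with at most seven vertices — so the minimum is indeed $n$; the substantive point is to make the minimiser simple.

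\emph{Simple minimisers.} Assume $f_0\ge11$ and let $n$ be as above; then $\binom{n-1}{2}-1\ge f_0\ge11$ forces $n\ge7$. By duality it is enough to exhibit a \emph{simplicial} $4$-polytope with $n$ vertices and exactly $f_0$ facets. A simplicial $4$-polytope on $n$ vertices has between $3n-10$ and $\binom{n-1}{2}-1$ facets, and \emph{every} integer in that range occurs: this is precisely the construction of \cite[10.4.1]{Gru03}, and it also follows from the $g$-theorem, the number of facets being $3n-10+g_2$ with $g_2$ free in $[0,\binom{n-4}{2}]$. The upper bound $f_0\le\binom{n-1}{2}-1$ is exactly the inequality just proved; for the lower bound, if $n=7$ then $3n-10=11\le f_0$ by hypothesis, while if $n\ge8$ then the elementary estimate $3n-10\le\binom{n-2}{2}\le f_0$ applies. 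So $f_0$ is an admissible number of facets, the required simplicial polytope exists, and its dual is the desired simple $4$-polytope. (The threshold $f_0\ge11$ is exactly what is needed to exclude the vertex counts $6,7,10$, for which no simple $4$-polytope exists at all.)

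If one prefers to stay within the world of polytopes, as Grünbaum's construction does, the second part can be done by induction on $n$ using the slicing operation of \cref{lem:examples-all-neighbours-simple}(ii): the dual cyclic polytope $C(n-1,4)^{*}$ is a simple $4$-polytope with $n-1$ facets, $\binom{n-2}{2}-1$ vertices, and — by Gale's evenness condition applied to the edge of $C(n-1,4)$ joining its two extreme vertices — a $2$-face with $n-3$ vertices; slicing paths of $1,2,\dots,n-4$ vertices out of that $2$-face produces simple $4$-polytopes with $n$ facets and every $f_0$ in $[\binom{n-2}{2}+2,\binom{n-1}{2}-1]$, while the two smallest values $\binom{n-2}{2},\binom{n-2}{2}+1$ of each range, together with the base dimension $n=7$ (where $f_0\in\{11,12,13,14\}$), are handled by small explicit examples such as truncation polytopes and products of a triangle with a polygon. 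I expect the main obstacle in this second approach to be propagating through the induction a lower bound on the size of the largest $2$-face strong enough to keep the slicing move available — exactly the bookkeeping that invoking the $g$-theorem lets one bypass.
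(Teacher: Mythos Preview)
Your argument is correct. The lower bound is the same as the paper's (both are the Upper Bound Theorem inequality $f_0\le\tfrac12 f_3(f_3-3)$, phrased dually), but your construction of simple minimisers is genuinely different. The paper proceeds by induction on $n$: the base $n=7$ is Br\"uckner's classification of simple $4$-polytopes with seven facets, and for $n\ge8$ the values $f_0\in[\binom{n-2}{2}+2,\binom{n-1}{2}-1]$ come from slicing a long $2$-face of $C(n-1,4)^{*}$ exactly as you sketch, while the two smallest values $\binom{n-2}{2}$ and $\binom{n-2}{2}+1$ are obtained by truncating a vertex from the inductively known simple examples with $n-1$ facets and $\binom{n-2}{2}-3$, $\binom{n-2}{2}-2$ vertices. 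Your route via the sufficiency half of the $g$-theorem (or equivalently the simplicial $4$-polytope construction in \cite[10.4.1]{Gru03}) is shorter and avoids both the induction and the appeal to Br\"uckner, at the cost of invoking a substantially deeper result; the paper's argument stays entirely within elementary truncation operations.

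One remark on your closing paragraph: the obstacle you anticipate in the inductive approach does not in fact arise. The paper never needs to propagate any information about $2$-face sizes through the induction, because the slicing step is always performed on the fixed polytope $C(n-1,4)^{*}$, whose large $2$-face is known explicitly, while the two leftover values are handled by single-vertex truncation, which needs no such hypothesis. So the ``bookkeeping'' you worry about is sidestepped rather than carried.
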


\begin{proof} For $n\ge2$, the intervals $\left[\binom{n-1}{2},\binom{n}{2}-1\right]$ are obviously disjoint, and their union is all the natural numbers. So given $f_{0}$, there is a unique integer $n$ satisfying the given inequality. First we claim that no 4-polytope with $f_0$ vertices has fewer than $n$ facets. Refer to \cite[Sec. 10.1]{Gru03} for the known inequality \[f_0\le\h f_3(f_3-3),\] which (given that $f_3$ is positive) is equivalent to $f_3\ge\h(\sqrt{8f_0+9}+3)$. If $f_0>\binom{n-2}{2}-1$, then
$$f_3>\h\left(\sqrt{(2n-5)^2}+3\right)=n-1,$$which settles the claim.

Next we prove by induction on $n\ge7$ that, if  $\binom{n-2}{2}\le f_0\le\binom{n-1}{2}-1$ and $f_0\ge11$, then there is a simple polytope with $f_0$ vertices and $n$ facets.

In the base case $n=7$  we have
$10\le f_{0}\le 14$.
This case  was settled by Br{\"u}ckner \cite{Bru08}, who completely classified the simple 4-polytopes with 7 facets. They are the simplicial 4-prism with one vertex truncated \cite[Fig. 9]{Bru08}, the polytope $\Delta({2,2})$ with one vertex truncated \cite[Fig. 10 or 10a]{Bru08}, the polytope $\Delta({1,1,2})$ (i.e. the Minkowski sum of a square and a triangle) \cite[Fig. 11]{Bru08}, the polytope $\Delta({2,2})$ with one edge truncated \cite[Fig. 12]{Bru08}, and $C(7,4)^*$ \cite[Fig. 13]{Bru08}. They have respectively 11, 12, 12, 13, and 14 vertices.

Suppose that $n\ge8$. Then
\begin{align*}
	\binom{n-3}{2}< \binom{n-2}{2}-3<  \binom{n-2}{2}-2< \binom{n-2}{2}-1.
\end{align*}
Therefore, by the induction hypothesis, there are two simple $4$-polytopes with $n-1$ facets, and $f_{0}=\binom{n-2}{2}-3$ and $f_{0}=\binom{n-2}{2}-2$ vertices, respectively. Truncating a vertex from these and applying \cref{lem:examples-all-neighbours-simple}(i) then establishes the conclusion for  $f_0=\binom{n-2}{2}$ and $\binom{n-2}{2}+1$, respectively.

Because $C(n-1,4)^*$ contains $\h (n-1)(n-4)=\binom{n-2}{2}-1$ vertices, $n-1$ facets,  and a 2-face $F$ with $n-3$ vertices, truncating $m$ vertices of  $F$, for $1\le m\le n-4$, as in \cref{lem:examples-all-neighbours-simple}(ii), will give us a simple polytope with $n$ facets and $\binom{n-2}{2}-1+m+2$ vertices. This establishes the conclusion for
$$\binom{n-2}{2}+2\le f_0\le\binom{n-2}{2}-1+n-2=\binom{n-1}{2}-1,$$
thereby completing the proof of the lemma.
\end{proof}

 We require another concept, related to the definition of beyond, and a result from \cite[Sec. 5.2]{Gru03}. A point $v$ is {\it beneath} a facet of a polytope if $v$ belongs to the open halfspace that is determined by the supporting hyperplane for that facet and contains the interior of the polytope.

\begin{proposition}
[{\cite[Thm.~5.2.1]{Gru03}}]
\label{prop:beneath-beyond} Let $P$ and $P'$ be two $d$-polytopes in $\mathbb{R}^d$, and let $v$ be a vertex of $P'$ such that $v\not\in P$ and $P'=\conv (P\cup \{v\})$. Then a face $F$ of $P$ is a face of $P'$ if and only if there exists a facet $J$ of $P$ such that  $F\subseteq J$ and $v$ is beneath  $J$.
\end{proposition}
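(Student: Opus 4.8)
The plan is to prove the two implications separately, relying only on elementary properties of supporting hyperplanes together with the standard fact that every vertex of $P'=\conv(P\cup\{v\})$ lies in $\operatorname{vert}(P)\cup\{v\}$.

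For the ``if'' direction I would start from a facet $J$ of $P$ with $F\subseteq J$ and $v$ beneath $J$, and let $H$ be the hyperplane supporting $P$ along $J$, so that $P$ lies in a closed halfspace $H^{\le}$ and $\operatorname{int}P$ in the corresponding open halfspace $H^{<}$. Being beneath $J$ means $v\in H^{<}$, hence $P'\subseteq H^{\le}$ and $H$ supports $P'$ as well; and since $v\notin H$, the face $P'\cap H$ is the convex hull of the vertices of $P'$ on $H$, each of which is a vertex of $P$, so $P'\cap H$ is contained in $P\cap H=J$ and, containing it, equals $J$. Thus $J$ is a face of $P'$, and since $F$ is a face of $P$ contained in $J$ it is a face of $J$; transitivity of the face relation then makes $F$ a face of $P'$.

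For the ``only if'' direction, assume $F$ is a face of both $P$ and $P'$. Then $F\neq P$ --- otherwise $P$ would be a $d$-dimensional face of $P'$, forcing $P=P'$ against $v\notin P$ --- so $F$ is also a proper face of $P'$. Since $v\notin P\supseteq F$, and a proper face of a polytope is the intersection of the facets containing it, there is a facet $J$ of $P'$ with $F\subseteq J$ but $v\notin J$. I would then argue that this $J$ is in fact a facet of $P$: writing $H$ for its supporting hyperplane, we have $v\notin H$ (else $v\in P'\cap H=J$), so $J=P'\cap H$ is the convex hull of the vertices of $P'$ on $H$, all of which are vertices of $P$; hence $J=P\cap H$, a face of $P$ of dimension $\dim J=d-1$. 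Finally $v$ lies in the open halfspace of $H$ containing $\operatorname{int}P'$, hence in the one containing $\operatorname{int}P$, so $v$ is beneath $J$, as required. The same argument covers the degenerate case $F=\varnothing$, using that the intersection of all facets of $P'$ is empty.

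The tools invoked --- vertices of $P'$ lie in $\operatorname{vert}(P)\cup\{v\}$, a proper face equals the intersection of the facets containing it, and heredity and transitivity of the face relation --- are all standard. The one place that demands a little care, and the main (modest) obstacle, is the ``only if'' direction: one must check that a facet of $P'$ missing $v$ is genuinely $(d-1)$-dimensional as a face of $P$, rather than dropping in dimension. This is precisely where one uses the location of $v$ relative to the relevant supporting hyperplane, and not merely the hypothesis $v\notin P$.
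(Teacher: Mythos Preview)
The paper does not supply its own proof of this proposition; it is quoted verbatim as \cite[Thm.~5.2.1]{Gru03} and used as a black box. Your argument is a correct, self-contained proof of the cited result: both implications go through as you describe, and the one delicate point you flag---that a facet of $P'$ avoiding $v$ really is a facet (not a lower-dimensional face) of $P$---is handled correctly via the supporting hyperplane and the observation that all vertices of $P'$ on $H$ already belong to $P$. There is nothing to compare against in the paper itself.
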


We show that  the $d$-pentasm is the unique minimiser of the number of $k$-faces among the $d$-polytopes with $2d+1$ vertices and at least one vertex that is both nonpyramidal and nonsimple.

\begin{theorem}\label{thm:main} Let $d\ge 5$. Let $P$ be a $d$-polytope with $2d+1$ vertices, at least one of which is both nonpyramidal and nonsimple. Suppose $f_k(P)\le f_k(Pm(d))$ for some $k\in [1,d-2]$. Then $P$ is a pentasm.
\end{theorem}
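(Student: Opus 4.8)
The plan is to induct on $d$, using \cref{lem:five-dim}(iii) as the base case $d=5$. For the inductive step, fix $d\ge 6$ and assume the result holds in dimension $d-1$. By \cref{cor:pyramids-dplus2-facets} we may assume $P$ is not a multifold pyramid over a simple polytope; in particular $P$ has at least $d+3$ facets, and we may work with the conclusions of \cref{lem:nonpyramidal}. The case $k=1$ is already \cref{prop:edges}, so we may assume $k\in[2,d-2]$. Pick a nonpyramidal nonsimple vertex $w$ and a facet $F$ avoiding it; by \cref{lem:nonpyramidal}, after possibly replacing $F$ using part (iv), we are in one of cases (i), (ii), or (iii) of that lemma. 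The strategy is to show that case (iii) is impossible and that cases (i) and (ii) — where $F$ is a triplex $M(2,d-3)$ or a simplicial prism $M(d-1,0)$ and every vertex outside $F$ has degree at most $d+1$ — force $P$ to be a pentasm.

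For cases (i) and (ii): here $F$ has $2d-1$ vertices, so $r=2$ and there are exactly two vertices $v_1,v_2$ outside $F$, each of degree at most $d+1$ and neither simple nor pyramidal; so each has degree exactly $d+1$. I would run the counting inequality of \cref{cor:number-faces}(ii) with $S=\{v_1,v_2\}$ more carefully, tracking exactly when equality can hold, and combine it with $f_k(P)\le f_k(Pm(d))$ and the known value \eqref{eq:pentasm}. Equality throughout should pin down: the vertex figures $P/v_1$ and $P/v_2$ are triplices $M(2,d-2)$; the edges between $\{v_1,v_2\}$ and $F$ are tightly controlled; and $v_1v_2$ is itself an edge. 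One then needs to reconstruct $P$ combinatorially: $P$ is the convex hull of the $(d-1)$-polytope $F$ (a triplex or simplicial prism, with $2d-1$ vertices) together with $v_1,v_2$. Using \cref{prop:beneath-beyond} applied twice (first adjoin $v_1$, then $v_2$), together with the degree-$(d+1)$ constraint and the structure of the facets of $M(2,d-3)$ and $M(d-1,0)$ (via \cref{rem:dplus2facets-facets} and the facet list of a simplicial prism given in the Preliminaries), I would identify which facets of $F$ each $v_i$ is beyond, and show the only combinatorial type that results with the correct $f$-vector and degree sequence is the pentasm — matching the facet description in \cref{rem:pentasm-facets}. It is here, I expect, that one must also rule out the capped prisms $CP(\ell,d)$: \cref{lem:capped} tells us they have strictly more $k$-faces than $Pm(d)$, so they cannot satisfy $f_k(P)\le f_k(Pm(d))$, which is consistent, but the combinatorial reconstruction must genuinely produce the pentasm's incidences and not a capped prism's.

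The main obstacle is case (iii), where $F$ has $2d-1$ vertices but contains a vertex that is itself nonpyramidal and nonsimple in $F$, forcing $F$ to have at least $d+2$ facets, i.e. $F$ contributes at least $d+2$ ridges of $P$. The plan here is a second counting argument: the $k$-faces of $P$ split as those in $F$ plus those meeting $S=\{v_1,v_2\}$. For the $k$-faces in $F$ I would apply the inductive hypothesis or \cref{prop:at-most-2d}/\cref{lem:pyramids-dplus3-facets} to $F$ (a $(d-1)$-polytope with $2(d-1)+1$ vertices and at least $(d-1)+3$ facets), obtaining $f_k(F)\ge f_k(Pm(d-1))$ with the stronger strict inequality $f_k(F)>f_k(Pm(d-1))$ when $F$ is not itself a pentasm, and for a pentasm $F$ one uses the explicit value \eqref{eq:pentasm}; meanwhile the faces meeting $S$ are bounded below by \cref{cor:number-faces}(ii) using that $v_1,v_2$ are nonsimple of degree $\le d+1$. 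Adding these and simplifying via \eqref{eq:binomial1}--\eqref{eq:binomial3} should yield $f_k(P)>f_k(Pm(d))$ — essentially because $F$ having ``$d+2$ ridges'' plus two extra nonsimple vertices outside is already too expensive — contradicting the hypothesis and eliminating case (iii). The delicate point is making the bound on $f_k(F)$ strong enough: I anticipate needing the induction hypothesis in the form ``if $F$ has $2(d-1)+1$ vertices, a nonpyramidal nonsimple vertex, and $f_k(F)\le f_k(Pm(d-1))$ then $F$ is a pentasm'' (exactly the statement being proved one dimension down), so that either $F$ is a pentasm — whose $f$-vector is known exactly and can be plugged in — or $f_k(F)$ strictly exceeds $f_k(Pm(d-1))$ by a margin that survives the arithmetic. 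Finally, assembling the cases: in cases (i)/(ii) $P$ is a pentasm, and cases (iii)/(iv) do not occur, so the theorem follows.
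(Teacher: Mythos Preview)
Your case assignment is inverted, and this stems from a miscount of vertices. In \cref{lem:nonpyramidal}, case (i) has $F=M(2,d-3)$, a $(d-1)$-polytope with $d+1$ vertices (so $r=d$), and case (ii) has $F=M(d-1,0)$ with $2d-2$ vertices (so $r=3$); only case (iii) has $F$ with $2d-1$ vertices (so $r=2$). Thus your sentence ``For cases (i) and (ii): here $F$ has $2d-1$ vertices, so $r=2$'' is simply false. More importantly, your strategy of eliminating case (iii) by a strict inequality $f_k(P)>f_k(Pm(d))$ cannot succeed, because the pentasm itself lives in case (iii): by \cref{rem:pentasm-facets}, a $d$-pentasm has a $(d-1)$-pentasm facet avoiding a nonpyramidal nonsimple vertex, and that pentasm facet has $2d-1$ vertices and a nonpyramidal nonsimple vertex of its own. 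So the inequality you hope for in case (iii) is tight, not strict.

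The paper's proof runs your plan in reverse. In the $r=2$ case (your ``case (iii)'') it carries out exactly the counting you describe, obtains equality throughout, and from equality deduces that $f_k(F)=f_k(Pm(d-1))$, whence $F$ is a pentasm by induction; it also reads off that $v_1$ has degree $d+1$ and $v_2$ is simple, so $f_1(P)=f_1(F)+2d=f_1(Pm(d))$, and \cref{prop:edges} finishes. The contradiction cases are $r=3$ and $r=d$. For $r=3$ (case (ii)) the argument is geometric: one shows that enough facets through the prism $F$ are themselves simplicial prisms that $F\cup F'$ is a simplicial $d$-prism $Q$, and then $P=\conv(Q\cup\{w\})$ with $w$ beyond a single simplex facet, i.e.\ $P$ is a capped prism, contradicting \cref{lem:capped}. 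For $r=d$ (case (i)) one splits according to whether the apices of $F=M(2,d-3)$ are pyramidal in $P$; if not, a counting estimate using two adjacent copies of $M(2,d-3)$ gives a strict overshoot, and if so, $P$ is a $(d-4)$-fold pyramid over a $4$-polytope and \cref{f-vector4-polytope} plus \cref{lem:pyramids-dplus3-facets} yield the contradiction. Finally, note that the induction hypothesis applies to $F$ only for $k\le d-3$, so the case $k=d-2$ must be handled separately via \cref{prop:ridges}(ii), which you omitted.
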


\begin{proof} By induction on $d$. The base case $d=5$ is contained in  \cref{lem:five-dim}, so we assume $d\ge6$.

The result is already known if $k=d-2$ (\cref{prop:ridges}(ii)). The case $k=1$ for $d\ge 5$ is equivalent to the statement that a $d$-polytope with $2d+1$ vertices and at most $d^{2}+d-1$ edges  is  a $d$-pentasm, i.e. to \cref{prop:edges}.  So we assume $2\le k\le d-3$.

Amongst the facets in $P$ that avoid at least one nonpyramidal, nonsimple vertex, choose $F$ with  a maximum number of vertices. By \cref{lem:nonpyramidal}, $F$ has exactly $d-1+d+2-r$ vertices with $r=2,3$ or $d$.   There are $r$ vertices outside $F$.

We begin with the case $r=2$, namely \cref{lem:nonpyramidal}(iii).
Then $F$  has a nonpyramidal, nonsimple vertex and  $2(d-1)+1$ vertices. Clearly the two vertices $v_1$ and $v_2$ outside $F$ are nonpyramidal. They must be adjacent, and one of them must be adjacent to at least $d$ vertices in $F$. Without loss of generality, suppose that $v_1$ is nonsimple and  its degree in $P$ is not smaller than that of $v_2$. From \cref{prop:number-faces}, we get a short sequence $F_{1},F_{2}$ of faces such that $F_1=P$, $F_2$ is a facet, $v_{i}\in F_{i}$, and $v_1\not\in F_2$. Any $k$-face of $P$ falls into one of four disjoint groups: either it
\begin{itemize}
    \item is contained in $F$, or
    \item contains $v_1$, or
    \item contains $v_2$ and is contained in $F_2$, or
    \item contains $v_2$ but not $v_1$, and is not contained in $F_2$.
\end{itemize}
Denote by $n$ the number of $k$-faces in this last group. \cref{prop:number-faces}, together with our hypothesis, now  yields the following estimates:
\begin{align}
\label{eq:main-1}
    f_k(P)&= f_k(F)+\phi_{k-1}(\deg_P(v_1),{d-1})+\binom{d-1}{k}+n\\
    &\ge f_k(Pm(d-1))+\binom{d}{k}+\binom{d-1}{k}-\binom{d-2}{k}+\binom{d-1}{k}+0\nonumber\\
    &= \binom{d}{k+1}+\binom{d-1}{k+1}+\binom{d-2}{k}+\binom{d}{k}+\binom{d-1}{k}-\binom{d-2}{k}+\binom{d-1}{k}\nonumber\\
    &= \binom{d+1}{k+1}+\binom{d}{k+1}+\binom{d-1}{k}\nonumber\\
    &=f_k(Pm(d))\nonumber\\
    &\ge f_k(P).\nonumber
\end{align}
Clearly none of these inequalities can be strict, which leads to the following conclusions.
\begin{enumerate}
    \item [(a)]  $f_k(F)= f_k(Pm(d-1))$.
    \item[(b)]  The vertex $v_1$ has  degree precisely $d + 1$, and therefore $\deg_P(v_2)\le d+1$.
    \item[(c)] The vertex $v_2$  is simple in $F_2$.
    \item[(d)] $n=0$.
\end{enumerate}

From Observation (a) the induction hypothesis kicks in, to inform us that $F$ is a pentasm. Observation (d) means that every $k$-face containing $v_2$ but not $v_1$ must be contained in $F_2$. In other words, $v_1$ is the only neighbour of $v_2$ not contained in $F_2$. Combined with Observation (c), we obtain that $v_2$ is simple in $P$.

Since $v_1$ has degree $d+1$ and $v_2$ is simple in $P$, the number of edges of $P$ is $f_1(F)+2d$, the number of edges of $Pm(d)$. Again  \cref{prop:edges}  yields that $P$ is a pentasm.

Now consider the cases $r=3, d$. These are respectively cases (ii) and (i) of \cref{lem:nonpyramidal}, so we may assume that every facet avoiding a nonpyramidal nonsimple vertex has at most $2d-2$ vertices and that every vertex in $P$ is either pyramidal or has degree at most $d+1$. We will see that both cases contradict our hypothesis.

If $r=3$, then some facet $F$ avoiding a nonpyramidal, nonsimple vertex has exactly $2d-2$ vertices. Such a facet must be a simplicial $d$-prism (\cref{lem:nonpyramidal}(ii)), and so no vertex in $P$ is pyramidal, whence every vertex of $P$ is either simple or has   degree $d+1$.
If there were only  $d-2$ nonsimple vertices, $P$ would have only $d^2+d-1$ edges, and thus would be a $d$-pentasm by \cref{prop:edges}. Therefore, we may assume $P$ has at least $ d-1$ nonsimple vertices. For $d\ge7$, this means $P$ contains at least six nonsimple vertices. It is easy to see that $2f_1(P)$ is the sum of the degrees of the vertices; if $d=6$, this implies that the number of  nonsimple vertices is even, and again $P$ contains at least six nonsimple vertices.

We label the vertices of $F$ as $u_1, u_2,\ldots, u_{d-1},x_1, x_2,\ldots, x_{d-1}$ so that  the sets $u_1, u_2,\ldots, u_{d-1}$ and $x_1, x_2,\ldots, x_{d-1}$ respectively form two $(d-2)$-simplices, and  $u_{i}x_{i}$ is an edge $E_{i}$ for $i=1,\ldots,d-1$.

With only three vertices outside, some vertex in $F$, without loss of generality  say $u_1$, must be nonsimple. Denote by $R$ the $(d-2)$-face of $F$ containing $u_2,\ldots, u_{d-1}, x_2,\ldots, x_{d-1}$, and by $F'$ the other facet of $P$ containing $R$. Since  $F'$ has at least $2d-3$ vertices, which is greater than $d+1$ for $d\ge 6$, but does not contain the nonsimple vertex $u_1$, $F'$ must also be a simplicial $(d-1)$-prism; let $u_d,x_d$ be the two vertices in $F'\setminus F$.  As remarked in the introduction, $F$ being a simplicial prism ensures that the lines containing $E_1,E_2,\ldots, E_{d-1}$ are all either parallel or concurrent at a point outside $F$.  Similarly, as $F'$ is a simplicial prism, the lines containing $E_2,\ldots E_{d-1},E_{d}$ must all be either parallel or concurrent at a point outside $F'$. Consequently, the same conclusion holds for $E_1,E_2,\ldots, E_{d}$. Denote by $w$ the unique vertex not in $F\cup F'$.

Since $P$ has  at least six nonsimple vertices, there must be a nonsimple vertex different from $u_1,u_{d},x_1,x_{d},w$. Without loss generality, this vertex is either $u_2$ or $x_2$. Let $R'$ denote the ridge containing the edges $E_1,E_3,\ldots, E_{d-1}$. Then $R'\subset F$; let $F''$ be the other facet containing $R'$. Then $F''$ avoids a nonsimple vertex ($u_2$ or $x_2$) and so, as before, must be a simplicial prism. In particular $F''$ contains precisely two of $u_d,x_d,w$. Suppose $F''$ contains $u_d$ and $w$. For $F''$ to be a simplicial prism, $E=u_dw$ must be an edge of it, and the lines containing $E_1,E_3,\ldots E_{d-1},E$ must all be either parallel or  concurrent at a point outside $F''$. But then all the lines containing $E_1,E_2,E_3,\ldots, E_{d},E$ must be parallel or concurrent at a point outside $F''$. Since $E$
and $E_d$ are concurrent at $u_d$, this is impossible. Similarly, the two vertices in $F''\setminus R'$ cannot be $x_d$ and $w$; they must be $u_d$ and $x_d$. Since $F''$ is a simplicial prism, this implies that $u_1u_d$ and $x_1x_d$ are edges of $F''$, and of $P$.

Thus the convex hull of $F\cup F'$ is a simplicial $d$-prism, which we will denote by $Q$. Then $P=\conv (Q\cup \{w\})$. We have that $F$ and $F'$ are facets of $Q$ and $P$. In this setting, \cref{prop:beneath-beyond} ensures that $w$ is beneath $F$ and $F'$. Besides, because $w$ has degree at most $d+1$ in $P$, it cannot be beyond  any of the simplicial prism facets of $Q$. Thus $w$ is beyond only one of the simplex  facets of $Q$, which implies that $P$ is a capped prism (\cref{subsection:capped}). But any capped prism has more $k$-faces than the pentasm, for $k\le d-2$, according to \cref{lem:capped}.

Now suppose that $r=d$, i.e. the facet $F$ is a $(d-3)$-fold pyramid over a quadrilateral. Equivalently,  $F$ is a pyramid with apex $u_{i}$ and base $R_{i}$ for each $i\in[1,d-3]$ where $R_{i}$ is a $(d-4)$-fold pyramid over a quadrilateral. Let $v_{1}$ be a nonsimple vertex outside $F$, and let $S$ denote the set of vertices outside $F$.

Suppose that some vertex $u_{i}$ of $F$ is nonpyramidal in $P$, say $u_{1}$. Then the other facet $F'$ containing $R_{1}$ must be a $(d-3)$-fold pyramid over a quadrilateral as well, because $u_{1}\not\in F'$, $u_{1}$ is nonpyramidal and nonsimple in $P$, and $F$ has the largest number of vertices among all facets avoiding a nonpyramidal, nonsimple vertex in $P$. Assume that $v_{1}\in F'$; we may do so without loss of generality because the vertex in $S\cap V(F')$ is nonpyramidal and nonsimple in $P$. \cref{cor:number-faces}(ii) applied to $S\setminus \left\{v_{1}\right\}$ leads to a strict inequality for $k\in [2,d-2]$:
\begin{align*}
f_{k}(P)&\ge f_{k}(F\cup F')+\sum_{i=1}^{d-1}\binom{d-i+1}{k}\\
&\ge \binom{d}{k+1}+\binom{d-2}{k}+\binom{d-1}{k}+\binom{d-3}{k-1}+\sum_{i=1}^{d-1}\binom{d-i+1}{k}\\
&=\binom{d}{k+1}+\binom{d-2}{k}+\binom{d-1}{k}+\binom{d-3}{k-1}+\binom{d+1}{k+1}\quad \text{(by \eqref{eq:binomial3})}\\
&=f_{k}(Pm(d))+\binom{d-2}{k}+\binom{d-3}{k-1}\\
&>f_{k}(Pm(d)).
\end{align*}

Finally consider the case that every $u_{i}$ is pyramidal in $P$: then $P$ is a $(d-3)$-fold pyramid over a $3$-polytope with $d+4$ vertices. It is more convenient now to consider $P$ as a $(d-4)$-fold pyramid over a 4-polytope $Q$ with $d+5$ vertices, equivalently as the free join of a $(d-5)$-simplex and $Q$.

\cref{f-vector4-polytope} gives us a simple 4-polytope $Q'$ with $f_0(Q')=f_0(Q)=d+5\ge11$ and $f_3(Q')$ minimal amongst all 4-polytopes with $d+5$ vertices. Being simple, we have $f_1(Q')=2f_0(Q')=2f_0(Q)\le f_1(Q)$. Euler's relation then implies that $f_2(Q')\le f_2(Q)$ as well. Denote by $P'$ the free join of a $(d-5)$-simplex $T$ and $Q'$. Then, by \cref{lem:join-faces} we again have
\begin{align*}
f_{k}(P)&= f_{k}(T)+f_{k-1}(T)f_0(Q)+f_{k-2}(T)f_1(Q)+f_{k-3}(T)f_2(Q)+f_{k-4}(T)f_3(Q)\\
&{}\quad+f_{k-5}(T)\\
&\ge f_{k}(T)+f_{k-1}(T)f_0(Q')+f_{k-2}(T)f_1(Q')+f_{k-3}(T)f_2(Q')+f_{k-4}(T)f_3(Q')\\
&{}\quad+f_{k-5}(T)\\
&=f_{k}(P')\\
&>f_{k}(Pm(d)),
\end{align*}
where the last inequality $f_{k}(P')>f_{k}(Pm(d))$ comes from \cref{lem:pyramids-dplus3-facets}.
This concludes this case, and with it, the proof of the theorem.
\end{proof}

An immediate consequence of \cref{lem:pyramids-dplus3-facets} and \cref{thm:main}  is the high-dimensional case of our main result.

\begin{theorem}
\label{thm:main-II}
Let $d\ge 6$. Let $P$ be a $d$-polytope with $2d+1$ vertices such that $f_k(P)\le f_k(Pm(d))$ for some $k\in [1,d-2]$.
\begin{enumerate}
\item  If at least one vertex in $P$ is both nonpyramidal and nonsimple, then $P$ is a pentasm.
\item If every vertex in $P$ is pyramidal or simple, then $P$ is a $t$-fold pyramid over $\Delta(r,s)$ for some $r,s>0$ and $t\ge 0$ such that $d=r+s+t$.
\end{enumerate}
\end{theorem}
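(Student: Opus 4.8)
The statement is, as the opening remark makes clear, essentially a repackaging of \cref{thm:main} together with \cref{cor:pyramids-dplus2-facets}; the plan is simply to make this packaging explicit.

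For part (i) I would invoke \cref{thm:main} directly. Since $d\ge 6\ge 5$, the polytope $P$ satisfies all the hypotheses of that theorem verbatim: it is a $d$-polytope with $2d+1$ vertices, at least one of which is both nonpyramidal and nonsimple, and $f_k(P)\le f_k(Pm(d))$ for some $k\in[1,d-2]$. Its conclusion is precisely that $P$ is a pentasm, so nothing further is required here.

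For part (ii) the one point needing attention is that the hypothesis ``every vertex of $P$ is pyramidal or simple'' is equivalent to ``$P$ is a multifold pyramid over a simple polytope'', which is the form required by \cref{cor:pyramids-dplus2-facets}. I would record the following elementary descent: if $v$ is a pyramidal vertex of $P$ and $P$ is the pyramid with apex $v$ over a base $B$, then for every other vertex $w$ one has $\deg_P(w)=\deg_B(w)+1$ and $\dim P=\dim B+1$, so $w$ is simple in $P$ if and only if it is simple in $B$; moreover $w$ is pyramidal in $P$ if and only if it is pyramidal in $B$, both conditions being equivalent to $w$ being adjacent to every other vertex (the apex $v$ being automatically a neighbour of $w$). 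Hence $B$ again has every vertex pyramidal or simple. If $P$ is not a pyramid it has no pyramidal vertex, so every vertex is simple and $P$ is simple; in general, peeling off pyramidal apices one at a time (the dimension strictly decreasing at each step) exhibits $P$ as a $t$-fold pyramid over a polytope $C$ that is not a pyramid and still has every vertex pyramidal or simple, whence $C$ is simple. Thus $P$ is a multifold pyramid over a simple polytope.

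With this in hand, \cref{cor:pyramids-dplus2-facets} applies directly: $P$ has $2d+1$ vertices and $f_k(P)\le f_k(Pm(d))$ for some $k\in[1,d-2]$, so $P$ has exactly $d+2$ facets, and is therefore a $t$-fold pyramid over $\Delta(r,s)$ for suitable $r,s>0$ and $t\ge 0$ with $d=r+s+t$, which is the assertion of part (ii). I do not anticipate any real obstacle: the substantive work lies entirely in \cref{thm:main} (which occupies most of this section) and in \cref{lem:pyramids-dplus3-facets}; the only place where a moment's care is needed is the routine reduction from ``every vertex pyramidal or simple'' to ``multifold pyramid over a simple polytope'' described above.
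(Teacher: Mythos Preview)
Your approach is correct and matches the paper's own treatment: \cref{thm:main-II} is presented there as an immediate consequence of \cref{thm:main} and \cref{lem:pyramids-dplus3-facets} (equivalently, \cref{cor:pyramids-dplus2-facets}), exactly as you package it, and your added reduction ``every vertex pyramidal or simple $\Rightarrow$ multifold pyramid over a simple polytope'' is the natural way to bridge the hypotheses.

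One small slip to fix: in that reduction, the conclusion ``$w$ is pyramidal in $P$ if and only if it is pyramidal in $B$'' is correct, but your stated reason---that both conditions are equivalent to $w$ being adjacent to every other vertex---is not. Adjacency to all other vertices is necessary but not sufficient for pyramidality; any neighbourly $d$-polytope with more than $d+1$ vertices (e.g.\ a cyclic polytope $C(n,d)$ with $d\ge4$) furnishes counterexamples. The correct argument is that the facets of the pyramid $P$ containing the apex $v$ are exactly the pyramids (with apex $v$) over facets of $B$; hence a facet of $P$ missing only $w$ corresponds to a facet of $B$ missing only $w$, and conversely.
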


Combining  \cref{thm:main-II} with \cref{cor:dprime} and the low-dimensional cases in \cref{lem:five-dim}, we get the conclusion of  \cref{thm:2dplus1-bound}.

When $d$ is prime, the pentasm is the unique minimiser of $f_k$ for all $k\le d-2$. But
when $d$ is composite, \cref{thm:2dplus1-bound} gives us a finite list of candidates for the minimiser of each $f_k$. We conjecture the following more precise conclusion, which we have verified for all $d\le100$.
 \begin{enumerate}   \item If  $d$ is composite, and $k\le d/2$,  then the $d$-pentasm is the unique minimiser for $f_k$.
    \item If  $d=rs$ is composite,  $r$ is the smallest prime factor of $d$, and $k> d/2$,  then a $(d-r-s)$-fold pyramid over $\Delta({r,s})$ is the unique minimiser of $f_k(P)$.
\end{enumerate}

\bibliographystyle{amsplain}

\providecommand{\bysame}{\leavevmode\hbox to3em{\hrulefill}\thinspace}
\providecommand{\MR}{\relax\ifhmode\unskip\space\fi MR }
\providecommand{\MRhref}[2]{%
  \href{http://www.ams.org/mathscinet-getitem?mr=#1}{#2}
}
\providecommand{\href}[2]{#2}

\end{document}